\documentclass[12pt]{amsart}
\usepackage{amssymb, amsmath, amsfonts, amsthm}
\usepackage[margin=0.9 in]{geometry}
\usepackage[abbrev]{amsrefs}
\usepackage[colorlinks=true,linkcolor=blue]{hyperref}

\def \dd {\partial}

\def \eps {\varepsilon}

\def \p {p}
\def \q {q}

\DeclareMathOperator{\Ric}{Ric}

\DeclareMathOperator{\vol}{vol}
\DeclareMathOperator{\Hess}{Hess}

\DeclareMathOperator{\id}{id}
\DeclareMathOperator{\sn}{sn}
\DeclareMathOperator{\cs}{cs}
\DeclareMathOperator{\tn}{tn}
\DeclareMathOperator{\II}{II}

\DeclareMathOperator{\diver}{div}

\newcommand{\f}[2]{\frac{#1}{#2}}

\title{Sharp Fundamental Gap Estimate on Convex Domains of Sphere}
\author{Shoo Seto}
\address{Department of Mathematics\\
         University of California\\
         Santa Barbara, CA 93106}
\email{\href{mailto:shoseto@ucsb.edu}{shoseto@ucsb.edu}}
\author{Lili Wang}
\address{Department of Mathematics\\
        East China Normal University\\
        Dong Chuan Road 500,
        Shanghai 200241, People's Republic of China}
\email{\href{mailto:lilyecnu@outlook.com}{lilyecnu@outlook.com}}
\author{Guofang Wei}
\address{Department of Mathematics\\
         University of California\\
         Santa Barbara, CA 93106}
\email{\href{mailto:wei@math.ucsb.edu}{wei@math.ucsb.edu}}
\thanks{Partially supported by NSF DMS 1506393}
\keywords{Eigenvalue estimate, spectral gap}
\date{}

\theoremstyle{definition}
\newtheorem{theorem}{Theorem}[section]
\newtheorem{definition}[theorem]{Definition}

\newtheorem{lemma}[theorem]{Lemma}
\newtheorem{remark}[theorem]{Remark}

\newtheorem{proposition}[theorem]{Proposition}
\newtheorem{corollary}[theorem]{Corollary}

\numberwithin{equation}{section}

\begin{document}
\maketitle
\begin{abstract}
	 In their celebrated work, B. Andrews and J. Clutterbuck proved the fundamental gap (the difference between the first two eigenvalues) conjecture for convex domains in the Euclidean space \cite{andrewsclutterbuckgap} and conjectured similar results hold for spaces with constant sectional curvature.   We prove the conjecture for the sphere. Namely when $D$, the diameter of a convex domain in the unit $S^n$ sphere, is $\le \frac{\pi}{2}$, the gap is greater than the gap of the corresponding $1$-dim sphere model. We also prove the gap is $\ge 3\frac{\pi^2}{D^2}$ when $n \ge 3$, giving a sharp bound.  As in \cite{andrewsclutterbuckgap}, the key is to prove a super log-concavity of the first eigenfunction.
\end{abstract}

\section{Introduction}
 Given a bounded smooth domain $\Omega$ in a Riemannian manifold $M^n$, the eigenvalues of the Laplacian on $\Omega$ with respect to the Dirichlet and Neumann boundary conditions are given by
\[ 0 < \lambda_1 < \lambda_2 \le \lambda_3  \cdots \to \infty\]
and
\[ 0= \mu_0 < \mu_1 \le \mu_2 \cdots \to \infty
\] respectively.  There are many works on estimating the eigenvalues, especially the first eigenvalues. Estimating the gap between the first two eigenvalues, the fundamental (or mass) gap,
\[\Gamma (\Omega) = \begin{cases}
\lambda_2 - \lambda_1 >0  &  \text{Dirichlet boundary} \\
\mu_1>0 & \text{Neumann boundary}
\end{cases}
\]  of the Laplacian or more generally for Schr\"{o}dinger operators is also very important both in mathematics and physics. For Neumann boundary condition, it is the same as estimating the first nontrivial eigenvalue. In this case, for a convex domain in a Riemannian manifold with Ricci curvature bounded from below, a sharp lower bound for $\mu_1$  is given by a 1-dim model \cite{Zhong-Yang84,  Kroger92, Chen-Wang97, Bakry-Qian2000, andrewsclutterbuck}. For Dirichlet boundary condition,  a sharp upper bound for $\lambda_2 - \lambda_1$ has been obtained for domains in the space of constant sectional curvature in \cite{AshbaughBenguria92, ashbaughbenguria2, Benguria-Linde2007} in their solution of the Payne-Polya-Weinberger conjecture. The optimal bound is achieved by geodesic balls. For convex domains $\Omega \subset \mathbb{R}^n$ with diameter $D$, it was independently conjectured by van den Berg, Ashbaugh and Benguria, Yau  \cite{vandenberg,ashbaughbenguria,yau} in the 80's that the gap $\Gamma(\Omega)$ has the sharp lower bound of $\frac{3\pi^2}{D^2}$.
 The subject has a long history, see the excellent survey by Ashbaugh \cite{Ashbaugh06} for discussion of the fundamental gap and history up to 2006.  We only mention that in the influential paper, Singer, Wong, Yau and Yau \cite{singerwongyauyau} showed that $\Gamma(\Omega)  \geq \frac{\pi^2}{4D^2}$. Yu and Zhong improved this to $\frac{\pi^2}{D^2}$, see also \cite[]{Ling1993}.  Only in 2011, the conjecture was completely solved by B. Andrews and J. Clutterbuck in their celebrated work \cite{andrewsclutterbuckgap} by establishing a sharp log-concavity estimate for the first eigenfunction,  see also \cite{ni}.  For convex domains on a sphere, Lee and Wang  \cite{Lee-Wang} showed the gap is $ \ge \frac{\pi^2}{D^2}$. See \cite{Oden-Sung-Wang} for an estimate on general manifolds.

  In this paper we give a sharp lower bound on the gap for convex domains on a sphere.
One of our main result is the following.
\begin{theorem} \label{main1}
	Let $\Omega \subset S^n$ be a strictly convex domain with diameter $D$, $\lambda_i (i=1,2)$ be the first two eigenvalues of the Laplacian on $\Omega$ with Dirichlet boundary condition. Then
	\begin{equation} \label{gap-est}
\Gamma(\Omega) =	\lambda_2-\lambda_1 \ge \bar{\lambda}_2 (n,D) - \bar{\lambda}_1 (n,D) \ \mbox{if}  \ D \le \frac{\pi}{2},
		\end{equation}
	where $\bar{\lambda}_i(n,D)$ are the first two eigenvalues of the operator $\frac{d^2}{ds^2}-(n-1)\tan(s)\frac{d}{ds}$ on $[-\frac D2, \frac D2]$ with Dirichlet boundary condition. Furthermore, \begin{equation}
	\bar{\lambda}_2 (n,D) - \bar{\lambda}_1 (n,D) \ge 3 \frac{\pi^2}{D^2} \ \ \mbox{if}  \ D < \pi,  \ n \ge 3.
	\end{equation}
\end{theorem}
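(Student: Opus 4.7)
The plan is to adapt the modulus of concavity strategy of Andrews--Clutterbuck \cite{andrewsclutterbuckgap}. Normalize $u_1>0$ on $\Omega$ and write $\varphi := \log u_1$. Let $\bar u_1>0$ be the first Dirichlet eigenfunction of the 1D model operator $\frac{d^2}{ds^2}-(n-1)\tan(s)\frac{d}{ds}$ on $[-\frac{D}{2},\frac{D}{2}]$, and set $\bar\varphi := \log \bar u_1$. The central analytic step is the super log-concavity estimate
\begin{equation*}
\langle \nabla\varphi(x_2),\dot\gamma(L)\rangle - \langle \nabla\varphi(x_1),\dot\gamma(0)\rangle \;\le\; 2\,\bar\varphi'(L/2), \qquad (\ast)
\end{equation*}
for every pair of distinct points $x_1,x_2\in\bar\Omega$ joined by a minimizing geodesic $\gamma:[0,L]\to S^n$ of length $L=d(x_1,x_2)$. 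The hypothesis $D\le\pi/2$ ensures such geodesics stay in $\Omega$ by convexity and that $\cos(s)>0$ on $[-D/2,D/2]$, so that $\bar u_1$ is smooth and positive in the interior.

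To prove $(\ast)$, I would study the two-point function
\begin{equation*}
Z(x_1,x_2) := \langle \nabla\varphi(x_2),\dot\gamma(L)\rangle - \langle \nabla\varphi(x_1),\dot\gamma(0)\rangle - 2\,\bar\varphi'(L/2)
\end{equation*}
on pairs of distinct points in $\bar\Omega$ and show $Z\le 0$ by a maximum principle (either in elliptic form, or via a heat-flow deformation as in \cite{andrewsclutterbuckgap}). Boundary contributions are ruled out by $\varphi\to-\infty$ on $\partial\Omega$ together with $\bar\varphi'(\pm D/2)=\mp\infty$, so any positive supremum must be attained at an interior pair. At such a maximum, differentiating $Z$ in both arguments and combining the identity $\Delta\varphi+|\nabla\varphi|^2=-\lambda_1$ with the second variation of arc length on $S^n$ produces a differential inequality in which the curvature contribution generated by second variation is absorbed exactly by the $(n-1)\tan(s)$ drift of the 1D model; this matching is what both forces the choice of model operator and allows the maximum principle to close.

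With $(\ast)$ in hand, the gap inequality (\ref{gap-est}) follows by the standard reduction: the quotient $w=u_2/u_1$ solves $\Delta w + 2\nabla\varphi\cdot\nabla w=-(\lambda_2-\lambda_1)w$ with Neumann condition on $\partial\Omega$, and combining $(\ast)$ with a comparison of Rayleigh quotients against the 1D problem --- whose gap is by construction $\bar\lambda_2(n,D)-\bar\lambda_1(n,D)$ --- yields the inequality as in the Euclidean case. For the final statement of the theorem, the 1D model operator is self-adjoint in $L^2([-D/2,D/2],\cos^{n-1}(s)\,ds)$, and the Liouville substitution $\bar u=\cos^{-(n-1)/2}(s)\,v$ conjugates it, up to an additive shift of the eigenvalue, to the Schr\"odinger operator $-v''+\frac{(n-1)(n-3)}{4}\tan^2(s)\,v$ on $[-D/2,D/2]$ with Dirichlet data and the same spectral gap. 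For $n\ge 3$ the potential is nonnegative, symmetric and convex on $(-D/2,D/2)$, so Lavine's gap theorem for convex one-dimensional Schr\"odinger potentials delivers $\bar\lambda_2-\bar\lambda_1\ge 3\pi^2/D^2$, with equality at $n=3$ where the potential vanishes.

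The main obstacle I anticipate is closing the two-point maximum principle for $(\ast)$. Hessian comparison on $S^n$ generates trigonometric curvature terms that must line up precisely with derivatives of $\bar\varphi'$ --- any mismatch would destroy sharpness --- and this computation is what forces both the specific $(n-1)\tan(s)$ drift and the restriction $D\le\pi/2$, which keeps $\cos(s)$ away from zero and the 1D comparison problem non-degenerate. A secondary, essentially routine, technical point is verifying convexity of the Schr\"odinger potential for $n\ge 3$ and checking that the hypotheses of Lavine's inequality are satisfied.
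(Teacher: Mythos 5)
Your overall architecture matches the paper's: super log-concavity of $\log\phi_1$ via a two-point maximum principle, then a gap comparison through the quotient $w=\phi_2/\phi_1$, then a separate analysis of the one-dimensional model. Your treatment of the model gap is correct and is in fact the shortcut the paper itself credits to Chenxu He: after the Liouville substitution the potential is $\frac{(n-1)(n-3)}{4}\sec^2 s$ up to an additive constant, which is convex and symmetric for $n\ge 3$ on $(-\pi/2,\pi/2)$, so Lavine's theorem gives $\bar\lambda_2-\bar\lambda_1\ge 3\pi^2/D^2$ directly (the paper's own route is a monotonicity-in-$D$ argument via eigenvalue perturbation, which yields more information but is not needed for this inequality).

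The genuine gap is in the step you yourself flag as the main obstacle: you do not close the two-point maximum principle for $(\ast)$, and your stated reason for the hypothesis $D\le\pi/2$ is not the actual one. Positivity of $\cos(s)$ on $[-D/2,D/2]$ only requires $D<\pi$, so ``keeping $\cos(s)$ away from zero'' cannot be what forces $D\le\pi/2$. In the paper's computation the maximum principle produces, after the Bochner formula and the first-variation identities, quadratic terms $-2\tan(\tfrac{d_0}{2})\bigl[\langle\nabla\omega(y_0),e_n\rangle^2+\langle\nabla\omega(x_0),e_n\rangle^2\bigr]$ and a reaction term proportional to $(n-1)\bigl(K-\tan^2(\tfrac{d_0}{2})\bigr)\psi$; closing the argument with $\psi=(\log\bar\phi_1)'$ requires the auxiliary sign condition $2\psi'-4\tan(s)\psi-(n-1)(1-\tan^2 s)\le 0$, and verifying this uses $1-\tan^2(s)\ge 0$ for $s\le D/2$, i.e.\ precisely $D\le\pi/2$ (for $n=2$ one additionally needs the lower bound $\bar\lambda_1\ge \pi^2/D^2-\tfrac{n-1}{2}$ from the Schr\"odinger normal form). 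Without identifying this condition, the claim that the curvature terms ``are absorbed exactly'' by the $(n-1)\tan(s)$ drift is an assertion, not a proof, and it is exactly the step that fails for $K<0$. Two smaller but real issues: your boundary analysis omits the diagonal $x_1=x_2$, where one needs uniform log-concavity of $\phi_1$ near $\partial\Omega$ (the paper's Lemma \ref{Hessianest}) rather than blow-up of $\nabla\log\phi_1$; and the gap comparison is not a Rayleigh-quotient comparison but a second two-point maximum principle applied to $Q(x,y)=(w(x)-w(y))/\bar w(d(x,y)/2)$, which on the sphere requires the two-point Laplacian comparison $\sum_{i<n}\nabla^2_{E_i,E_i}d\le -2(n-1)\tan(d/2)$ to generate the drift term of the model --- an ingredient absent from the Euclidean case that ``as in the Euclidean case'' does not supply.
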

\begin{remark}
In fact we prove some monotonicity properties for the gap of the 1-dim model for any constant curvature, see Theorem~\ref{gap-mono} for more detail.  In particular, when $n =2$, the gap of the model is less than $3 \frac{\pi^2}{D^2}$ in the non-Euclidean case.  For sphere we expect it is still greater than $2 \frac{\pi^2}{D^2}$, and for fixed $D$,  we also expect  the gap increases when the dimension gets bigger for $n\geq 2$. See the Appendix~\ref{numerics} for some numerical evidence.
\end{remark}
\begin{corollary}
Let $\Omega \subset S^n$ be a strictly convex domain with diameter $D \le \frac{\pi}{2}$,  $\lambda_i (i=1,2)$ be the first two eigenvalues of the Laplacian on $\Omega$ with Dirichlet boundary condition. Then
\begin{equation}\label{gap-est-2}
\lambda_2-\lambda_1 \ge	3 \frac{\pi^2}{D^2} \ \ \mbox{when} \ n \ge 3.
\end{equation}
\end{corollary}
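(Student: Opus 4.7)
The plan is to obtain this bound by directly chaining together the two estimates already packaged inside Theorem~\ref{main1}, since the corollary is the hypothesis $D\le\pi/2$ together with $n\ge 3$ fed into the two conclusions of that theorem. I would write the argument in essentially one line: assume $\Omega\subset S^n$ is strictly convex with $D\le\pi/2$; since $\pi/2\le\pi$, both the hypothesis $D\le\pi/2$ of inequality~(\ref{gap-est}) and the hypothesis $D<\pi$ of the second inequality in Theorem~\ref{main1} are in force, so
\begin{equation*}
\lambda_2-\lambda_1 \;\ge\; \bar\lambda_2(n,D)-\bar\lambda_1(n,D) \;\ge\; 3\frac{\pi^2}{D^2},
\end{equation*}
which is (\ref{gap-est-2}).

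Before writing that line I would double-check two things. First, that the strict convexity assumption is what is needed to invoke the super log-concavity / comparison argument underlying (\ref{gap-est}); this is exactly the setting of Theorem~\ref{main1}, so nothing further is needed. Second, that the dimension restriction $n\ge 3$ really only enters through the one-dimensional gap comparison, not through the reduction from $\Omega$ to the 1-dim model. The remark following Theorem~\ref{main1} makes clear that when $n=2$ the model gap can drop below $3\pi^2/D^2$, so the $n\ge 3$ assumption is essential precisely for the second inequality to hold, and this is the only place it is used.

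There is no genuine obstacle in this corollary: the heavy lifting—the spectral comparison with the 1-dimensional operator $\frac{d^2}{ds^2}-(n-1)\tan(s)\frac{d}{ds}$ on $[-D/2,D/2]$, and the monotonicity/lower-bound analysis of the gap of that model—has already been done inside Theorem~\ref{main1}. The corollary is simply the convenient concatenation of those two assertions in the regime $D\le\pi/2$, $n\ge 3$, and the proof can be stated in a single sentence once Theorem~\ref{main1} is available.
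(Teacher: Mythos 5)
Your proposal is correct and is exactly how the paper derives the corollary: it is the immediate concatenation of the two inequalities in Theorem~\ref{main1}, using $D\le\pi/2<\pi$ so that both hypotheses hold simultaneously. Nothing further is needed.
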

\begin{remark}
	This estimate is optimal. Same estimates are true for Schr\"{o}dinger operator of the form $-\Delta + V$, where $V \ge 0$  and is convex.
\end{remark}
The key to proving (\ref{gap-est}) is the following log-concavity of the first eigenfunction.
\begin{theorem}  \label{log-con}
Let $\Omega \subset S^n$ be a strictly convex domain with diameter $D \le \frac{\pi}{2}$,   $\phi_1>0$ be a first eigenfunction of the Laplacian on $\Omega$ with Dirichlet boundary condition. Then for $\forall x, y  \in \Omega$, with $x \not= y$,
\begin{equation}  \label{log-phi1}
\langle \nabla \log \phi_1 (y), \gamma'(\tfrac{d}{2}) \rangle - \langle \nabla\log \phi_1(x),\gamma'(-\tfrac{d}{2})\rangle \leq 2\left( \log \bar{\phi}_1 \right)' \left(\frac{d(x,y)}{2}\right),
\end{equation}
where $\gamma$ is the unit normal minimizing geodesic with $\gamma(-\tfrac{d}{2}) =x$ and $\gamma(\tfrac{d}{2}) = y$, and $\bar{\phi}_1 >0$ is a first eigenfunction of the operator $\frac{d^2}{ds^2}-(n-1)\tan(s)\frac{d}{ds}$ on $[-\frac D2, \frac D2]$ with Dirichlet boundary condition with $d= d(x,y)$. Dividing (\ref{log-phi1}) by $d(x,y)$ and letting $d(x,y) \to 0$,  we have \begin{equation}
\nabla^2\log \phi_1 \le -  \bar{\lambda}_1\,\mbox{id}.  \label{Hessin-phi1}
\end{equation}
\end{theorem}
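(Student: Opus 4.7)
The plan is to prove \eqref{log-phi1} by a parabolic two-point maximum principle following the strategy of Andrews--Clutterbuck \cite{andrewsclutterbuckgap}. I would replace the eigenvalue problem by the Dirichlet heat flow $\partial_t\phi=\Delta\phi$ on $\Omega$ with a smooth strictly positive initial datum $\phi_0$ vanishing to first order on $\partial\Omega$. Since $e^{\lambda_1 t}\phi(\cdot,t)\to c\,\phi_1$ smoothly as $t\to\infty$, it suffices to establish the corresponding inequality for $v(\cdot,t):=\log\phi(\cdot,t)$ uniformly in $t$. The function $v$ satisfies $v_t=\Delta v+|\nabla v|^2$. In parallel, let $\bar v(s,t):=\log\bar\phi(s,t)$ where $\bar\phi$ evolves by the one-dimensional model equation on $[-\tfrac{D}{2},\tfrac{D}{2}]$, initialized at the positive even first eigenfunction so that $\bar v'$ is $t$-independent. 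For distinct $x,y\in\Omega$ and $\gamma$ the unit-speed minimising geodesic with $\gamma(\pm d/2)\in\{x,y\}$, define
\begin{equation*}
Z(x,y,t):=\langle\nabla v(y,t),\gamma'(d/2)\rangle-\langle\nabla v(x,t),\gamma'(-d/2)\rangle-2\bar v'\!\left(\tfrac{d(x,y)}{2},t\right).
\end{equation*}
A small perturbation (e.g.\ replacing $2\bar v'$ by $2\bar v'+\varepsilon$) arranges $Z<0$ initially, so it suffices to exclude a first interior zero at some $(x_0,y_0,t_0)$ with $x_0\ne y_0$ and then let $\varepsilon\downarrow 0$.

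\textbf{Boundary and diagonal.} Both are ruled out a priori. At the spatial boundary the Hopf lemma forces $|\nabla v|\to+\infty$ along the inward normal; strict convexity of $\Omega$ forces $\gamma'(-d/2)$ to point into $\Omega$ at $x\in\partial\Omega$ and $\gamma'(d/2)$ to point out of $\Omega$ at $y\in\partial\Omega$, so $\langle\nabla v(x),\gamma'(-d/2)\rangle\to+\infty$ and $\langle\nabla v(y),\gamma'(d/2)\rangle\to-\infty$, whence $Z\to-\infty$ on the spatial boundary. The diagonal $x=y$ is handled by smoothness of $\bar v'$ near $0$ and a Taylor expansion. Hence any first zero of $Z$ is interior, and the standard first- and second-order conditions apply.

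\textbf{First-contact computation.} At $(x_0,y_0,t_0)$ I would choose a parallel orthonormal frame $\{e_1,\dots,e_{n-1},e_n=\gamma'\}$ along $\gamma$ and carry out the Andrews--Clutterbuck pairing: vary $(x,y)$ by the parallel-transported pairs $(e_i,e_i)$ for $i=1,\dots,n-1$ and by $(e_n,e_n)$, and impose $\nabla_{(x,y)}Z=0$, $(\Delta_x+\Delta_y)Z\le 0$, $\partial_t Z\ge 0$. On $S^n$ the Jacobi field $J$ along $\gamma$ that interpolates a given unit normal parallel vector at both endpoints is $J(s)=\sec(d/2)\cos(s)E$, and the resulting index form yields
\begin{equation*}
\mathrm{Hess}_{(x,y)}d\bigl((e_i,e_i),(e_i,e_i)\bigr)=-2\tan(d/2),\qquad 1\le i\le n-1.
\end{equation*}
Substituting into the second-order condition together with the evolution equations for $v$ and $\bar v$, the $(n-1)\tan(d/2)$ contributions from the ambient curvature cancel exactly against the drift $-(n-1)\tan(s)\partial_s$ of the model operator evaluated at $s=d/2$. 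The first-order conditions pin the tangential components of $\nabla v$ at $x_0,y_0$ to $\bar v'(\pm d/2)$, and what remains is a manifestly non-positive quadratic expression contradicting $\partial_t Z\ge 0$.

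\textbf{Passage to the limit and main obstacle.} Sending $t\to\infty$ transfers $Z\le 0$ to $\phi_1$ and yields \eqref{log-phi1}. Dividing by $d(x,y)$ and letting $y\to x$ along a unit direction produces \eqref{Hessin-phi1}, using $\bar v'(0)=0$ and $\bar v''(0)=-\bar\lambda_1$ from the one-dimensional eigenvalue equation at $s=0$. The hard part is the curvature computation: producing exactly $-(n-1)\tan(d/2)$ from the sphere Jacobi equation and matching it against the drift of the comparison operator, together with controlling the remaining quadratic form in the first-order residuals. The hypothesis $D\le\pi/2$ is what makes this matching quantitative: it keeps $d/2\le\pi/4$ well inside the domain of $\tan$ and ensures $\Omega$ lies in an open hemisphere so that minimising geodesics and normal Jacobi fields are globally well-behaved.
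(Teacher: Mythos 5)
Your overall strategy (two–point maximum principle on $Z$ in the style of Andrews--Clutterbuck) is the right family of argument, but the proposal has two genuine gaps, one structural and one at the heart of the computation.

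First, the reduction to a general Dirichlet heat flow with arbitrary positive initial datum $\phi_0$, followed by $t\to\infty$, is precisely the step that is \emph{not} known to work when $K\neq 0$; the paper points this out explicitly. The reason is visible in the first-contact computation: to close the argument one must convert $\langle\nabla\|\nabla\omega\|^2,e_n\rangle$ into controllable quantities, and the paper does this via the identity $\|\nabla\log u\|^2=\frac{\Delta u}{u}-\Delta\log u=-\lambda_1-\Delta\log u$, which requires $u$ to satisfy the \emph{elliptic} equation $\Delta u=-\lambda_1 u$ in addition to the heat equation. That is why the paper evolves the specific solution $u=e^{-\lambda_1 t}\phi_1$ rather than a general caloric function: for a general flow $\frac{\Delta u}{u}=\frac{u_t}{u}$ is an uncontrolled function, and your quadratic remainder is no longer ``manifestly non-positive.'' As written, your scheme cannot be closed.

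Second, the claim that the $(n-1)\tan(d/2)$ curvature terms ``cancel exactly against the drift'' and leave a non-positive quadratic hides where the hypotheses actually bite. After the first- and second-variation identities, what one needs is that $\psi=(\log\bar\phi_1)'$ satisfies the Riccati-type ODE
\begin{equation*}
\psi''+2\psi\psi'-\tan(s)\left[(n+1)\psi'+2\psi^2+2\lambda_1\right]-(n-1)(1-\tan^2(s))\psi\le 0,
\end{equation*}
together with \emph{two further ingredients}: (i) $\lambda_1(\Omega)\ge\bar\lambda_1$ (the ``infinite strip'' domain-monotonicity lemma), since the ODE involves the eigenvalue of $\Omega$ while $(\log\bar\phi_1)'$ solves the model equation with $\bar\lambda_1$; and (ii) the auxiliary sign condition $2\psi'-4\tan(s)\psi-(n-1)(1-\tan^2(s))\le 0$ coming from the $\varepsilon$-perturbation terms, plus $\tan(d_0/2)\ge0$ to absorb the squared gradient terms (this is the step that fails for $K<0$). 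Condition (ii) is exactly where $D\le\pi/2$ enters: for $n\ge3$ one needs $1-\tan^2(s)\ge0$, i.e.\ $s\le\pi/4$, and for $n=2$ one additionally needs the lower bound $\bar\lambda_1\ge\frac{7}{2}K$. Your explanation of the role of $D\le\pi/2$ (hemisphere, well-behaved geodesics) is not the real reason — those only require $D<\pi$. Without (i), (ii) and the $K\ge0$ sign argument, the ``manifestly non-positive quadratic'' does not materialize.
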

This improves an early estimate of Lee and Wang  \cite[]{Lee-Wang} that $\nabla^2\log \phi_1 \le 0$.

In the proof we work on spaces with constant sectional curvature. In particular, our proof works for spheres and Euclidean spaces at the same time. Some of our results hold also for negative constant curvature (see Section 2). For the log-concavity estimate, the last step fails for negative curvature, see the proof of Theorem~\ref{log-con-preserve} for detail.   In fact we have a more general estimate, see Theorem~\ref{e-log-con-est}. We also have a parabolic version, see Theorems~\ref{log-con-preserve} and \ref{log-con-preserve2}. For negative constant curvature, it is not clear if the corresponding  log-concavity of the first eigenfunction holds. If it were true then we also get the corresponding gap estimate, see Theorem~\ref{gap-comp}.

The paper is organized as follows.  In \S 2 we study properties of the eigenvalues and eigenfunctions of the 1-dimensional model space obtained by considering the rotational symmetry of constant curvature spaces. When the curvature is not zero and dimension is not 1 or 3, the eigenvalue and eigenfunction of the 1-dimensional model can not be solved explicitly. We obtained  a gap estimate for the model by obtaining several monotonicity properties for the eigenvalues and eigenfunctions.

 In \S 3, we prove Theorem \ref{log-con},  the key super log-concavity estimate. Following \cite{andrewsclutterbuckgap}, the idea is to apply the maximum principle to the so called two-point functions. For $K \not= 0$, the computation is much more subtle. In $\mathbb R^n$, Andrews-Clutterbuck proved  the preservation of modulus holds for general solutions of the heat equation.  It is not clear if this is true when $K\not= 0$. We use both the heat equation and the Laplacian equation to prove several preservation of modulus. Several elliptic versions are also obtained.

  Finally in \S 4, with the log-concavity result we derive a gap comparison for general manifolds with lower Ricci curvature bound. Namely the gap of the Laplacian is greater or equal to the gap of the 1-dimensional model, thereby proving Theorem \ref{main1}. We give two proofs of the gap comparison, one elliptic and one parabolic. As another application of (\ref{log-phi1}) we also give a lower bound on the first Dirichlet  eigenvalue of the Laplacian on convex domain in sphere, see Proposition~\ref{lambda_12-lb}.

\subsection*{Acknowledgements} The authors would like to thank Xianzhe Dai, Zhiqin Lu for many helpful discussions during the preparation of this paper,  thank Lei Ni,  Guoqiang Wu, Yu Zheng for their interest in the subject.  Thanks are also due to Adam Dai and Charlie Marshak for help with numerical estimate using Mathematica.  We also would like to thank Mark Ashbaugh for helpful communication regarding his paper \cite{ashbaughbenguria2}. We would like especially thank Chenxu He for reading the paper very carefully and found an error in the earlier version of the paper.   Part of the work was done while the third author was in residence at the
Mathematical Sciences Research Institute in Berkeley, California during the
Spring 2016 semester,  supported by the National Science Foundation
under Grant No. DMS-1440140.  She would like to thank the organizers of the Differental Geometry Program and  MSRI for providing great environment for research.

\section{The gap of 1-dimensional model spaces}
Let $\mathbb M^n_K$ be the model space, the $n$-dimensional simply connected manifold with constant sectional curvature $K$. Denote $\sn_K(s)$ the coefficient of the Jacobi field starting from $0$ in $\mathbb M^n_K$. Namely $\sn_K(s)$ is the solution of
\[
\sn_K''(s) + K \sn_K(s)  =0, \ \ \sn_K (0) =0, \ \ \sn_K' (0) =1. \]
Let $\cs_K(s) = \sn_K'(s)$ and $\tn_K(s) = K\frac{\sn_K(s)}{\cs_K(s)}= -\frac{\cs_K'(s)}{\cs_K(s)}$. (This definition of $\tn_K$ has the opposite sign of the one in \cite{Andrews-survey}.) Explicitly we have
\begin{equation*}
\sn_K(s) =
\begin{cases}
\frac{1}{\sqrt{K}}\sin(\sqrt{K}s), & K > 0 \\
s, & K=0\\
\frac{1}{\sqrt{-K}}\sinh(\sqrt{-K}s) & K < 0,
\end{cases}
\quad \text{ and }\quad
\cs_K(s) =
\begin{cases}
\cos(\sqrt{K}s), & K > 0 \\
1, & K=0\\
\cosh(\sqrt{-K}s), & K<0,
\end{cases}
\end{equation*}
and
\begin{equation*}
\tn_K(s) =
\begin{cases}
\sqrt{K}\tan(\sqrt{K}s), & K > 0 \\
0, & K=0 \\
-\sqrt{-K}\tanh(\sqrt{-K}s) & K <0.
\end{cases}
\end{equation*}

We write the metric on $\mathbb M^n_K$ as the following. Given  a totally geodesic hypersurface  $\Sigma \subset \mathbb M^n_K$, let $s$ be the (signed) distance to $\Sigma$, then the metric of $\mathbb M^n_K$ is
\begin{equation}
g = ds^2 + \cs^2_K (s) g_{\Sigma}.  \label{metric}
\end{equation}
The Laplacian operator is $$ \Delta = \frac{\partial^2}{\partial s^2 } + (n-1) \frac{\cs_K'(s)}{\cs_K(s)} \frac{\partial}{\partial s} + \frac{1}{\cs_K^2(s)} \Delta_{\Sigma}. $$
The ``one-dimensional" model of the equation $\Delta \phi = -\lambda \phi$ is
\begin{equation}\label{onedimmodel}
\phi''-(n-1)\tn_K(s)\phi' + \lambda \phi = 0.
\end{equation}
Below we study the basic properties of the eigenvalues and eigenfunctions of this model with Dirichlet boundary condition on $[-\frac{D}{2},\frac{D}{2}]$. (We always assume $D< \frac{\pi}{\sqrt{K}}$ if $K>0$.)  The properties are parallel to the behavior of the first two eigenvalues and eigenfunctions of balls in $\mathbb S^n$ established in \cite{ashbaughbenguria2}, although there are some essential difference.

First, equation (\ref{onedimmodel}) is symmetric. Namely if $\phi(s)$ is a solution of (\ref{onedimmodel}) with Dirichlet boundary condition, then so is $\phi(-s)$. By taking $\phi(s) + \phi(-s)$ or $\phi(s) - \phi(-s)$, we get even or odd eigenfunction. By Courant's Theorem on nodal domains (see e.g. \cite[page 126]{Schoen-Yau}) the first eigenfunction does not change sign, and the second eigenfunction changes sign exactly once.  Hence we can choose the first and second eigenfunctions $\bar{\phi}_1, \bar{\phi}_2$ such that
\begin{equation}  \label{phi12}
\bar{\phi}_1 >0\ \mbox{ is even, and} \  \bar{\phi}_2\ \mbox{ is odd with} \  \bar{\phi}_2'(0) >0.
\end{equation}
 Denote $\bar{\lambda}_1 (n,D,K), \ \bar{\lambda}_2 (n,D,K)$ the corresponding eigenvalues of $\bar{\phi}_1, \  \bar{\phi}_2$.

With the change of variable $\phi (s) = \cs_K^{-\frac{n-1}{2}}(s) \varphi(s)$,  we obtain the  Schr\"odinger normal form of \eqref{onedimmodel},
\begin{equation}\label{schrodingernormal}
\varphi''(s) - \frac{(n-1)K}{4} \left( \frac{n-3}{\cs_K^2(s)}  - (n-1) \right)  \varphi=  - \lambda \, \varphi.
\end{equation}
Since $\frac{K}{\cs_K^2(s)} \ge K$, this immediately gives,
\begin{equation}
\bar{\lambda}_1(n,D,K) \ge \max \{\tfrac{\pi^2}{D^2} - \tfrac{n-1}{2}K, 0 \}. \label{lambda_1-bar-lowerbound}
\end{equation}
When $n=1,3$ or $K=0$, (\ref{schrodingernormal})  imples that we can find the eigenvalues and eigenfunctions explicitly and the gap $ \bar{\lambda}_2 (n,D,K) -\bar{\lambda}_1 (n,D,K) = 3 \frac{ \pi^2}{D^2}$. Namely  $D^2 \left( \bar{\lambda}_2 (n,D,K) -\bar{\lambda}_1 (n,D,K) \right)$ is a constant.
In general one can not find the eigenvalues explicitly. But as pointed out by Chenxu He, when $n\ge 3$ and $K>0$ (or $K<0$, $D \in (0, a(K)$, see below), the potential term in (\ref{schrodingernormal}) is convex, therefore the gap estimate (\ref{gap-est-model}) follows directly from the solution of 1-dimensional conjecture in \cite{Lavine}.  On the other hand,  the following monotonicity property has independent interest, and some of the monotonicity of eigenfunctions obtained in the proof will be used later on, so we still keep the theorem below.
\begin{theorem}  \label{gap-mono}
 For $K>0$,  $D< \frac{\pi}{\sqrt{K}}$,  $D^2 \left( \bar{\lambda}_2 (n,D,K) -\bar{\lambda}_1 (n,D,K) \right)$ is increasing in $D$ when $n>3$, decreasing in $D$ when $n=2$. Therefore, when $n\ge 3$, $D \in (0, \pi)$,
\begin{equation}  \label{gap-est-model}
 \bar{\lambda}_2 (n,D,1) -\bar{\lambda}_1 (n,D,1)  \ge  3 \frac{ \pi^2}{D^2}.
\end{equation}
When $K<0$, the same statement is true for $D \in (0, a(K)]$, where $a(K)$ is a positive constant depends on $K$. (see proof for its definition)
\end{theorem}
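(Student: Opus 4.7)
The plan is to rescale the eigenvalue problem to a fixed interval and apply the Feynman--Hellmann variational formula. The scaling $\tilde s = \sqrt K s$ in (\ref{onedimmodel}) gives $\bar\lambda_i(n,D,K) = K\,\bar\lambda_i(n,\sqrt K D,1)$, so $D^2(\bar\lambda_2-\bar\lambda_1)(n,D,K) = (\sqrt K D)^2(\bar\lambda_2-\bar\lambda_1)(n,\sqrt K D,1)$, reducing the monotonicity at fixed $K>0$ to the case $K=1$. Putting the ODE in Schr\"odinger normal form (\ref{schrodingernormal}) via $\phi = \cos^{-(n-1)/2}\varphi$ and rescaling $u = s/D$ yields a one-parameter family of Dirichlet problems on $[-1/2,1/2]$,
\begin{equation*}
-\Psi''(u) + W(u,D)\,\Psi(u) = \mu\,\Psi(u), \qquad W(u,D) := D^2 V(Du),
\end{equation*}
with $V(s) := \frac{n-1}{4}\bigl(\frac{n-3}{\cos^2 s}-(n-1)\bigr)$, whose eigenvalues are $\mu_i(D) = D^2\bar\lambda_i(n,D,1)$. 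Taking $\Psi_i$ to be the $i$-th eigenfunction with $\int_{-1/2}^{1/2}\Psi_i^2\,du = 1$, the theorem reads: $\mu_2(D)-\mu_1(D)$ is increasing in $D$ for $n>3$, decreasing for $n=2$, and identically $3\pi^2$ for $n=3$.

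The Feynman--Hellmann formula gives
\begin{equation*}
(\mu_2-\mu_1)'(D) = \int_{-1/2}^{1/2} (\Psi_2^2-\Psi_1^2)\,\partial_D W\,du, \qquad \partial_D W = 2DV(Du) + D^2 u\,V'(Du),
\end{equation*}
and the proof hinges on matching two sign structures. \emph{Fact (a)}: the Wronskian $W_0 := \Psi_1\Psi_2' - \Psi_1'\Psi_2$ satisfies $W_0' = (\mu_1-\mu_2)\Psi_1\Psi_2 < 0$ on $(0,1/2)$, with $W_0(0) = \Psi_1(0)\Psi_2'(0) > 0$ and $W_0(1/2) = 0$, so $W_0 > 0$ on $[0,1/2)$. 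Hence $(\Psi_2/\Psi_1)' = W_0/\Psi_1^2 > 0$ and $\Psi_2/\Psi_1$ is strictly increasing on $(0,1/2)$; since both eigenfunctions have the same $L^2$-norm, this ratio must cross $1$ at a unique $u_0 \in (0,1/2)$, so $\Psi_2^2 - \Psi_1^2$ is negative on $(0,u_0)$ and positive on $(u_0,1/2)$. \emph{Fact (b)}: a direct computation gives $\partial_u(\partial_D W) = D^2[\,3V'(Du) + Du\,V''(Du)\,]$, whose sign on $(0,1/2)$ equals that of $(n-1)(n-3)$, so $\partial_D W$ is strictly monotone in $u$ on $(0,1/2)$. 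Setting $c := \partial_D W(u_0,D)$ and using $\int(\Psi_2^2-\Psi_1^2)\,du = 0$, the derivative becomes $(\mu_2-\mu_1)'(D) = \int(\Psi_2^2-\Psi_1^2)(\partial_D W - c)\,du$; the matching sign structures from Facts (a) and (b) make the integrand pointwise non-negative for $n>3$ and non-positive for $n=2$, yielding the claim. In the limit $D\to 0^+$, $W(u,D) = O(D^2)$ uniformly, so the problem converges to $-\Psi'' = \mu\Psi$ on $[-1/2,1/2]$ with Dirichlet boundary, whose first two eigenvalues are $\pi^2$ and $4\pi^2$; hence $\lim_{D\to 0^+}(\mu_2-\mu_1) = 3\pi^2$, and combined with monotonicity this gives (\ref{gap-est-model}) for $n\ge 3$ and $D\in(0,\pi)$.

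For $K<0$ the plan is identical with $\cos$ replaced by $\cosh$; Fact (a) is unchanged, while the sign condition in Fact (b) only holds on a bounded interval of $D$, which is the source of the constraint $D\le a(K)$, with $a(K)$ the largest $D$ such that $\partial_u(\partial_D W)$ retains a constant sign on $(0,D/2)$. The main technical content is Fact (a), namely the monotonicity of $\Psi_2/\Psi_1$, which leverages the odd/even symmetry of the model and the equality of $L^2$ norms of the two eigenfunctions; Fact (b) reduces to a short calculation. As observed in the remark preceding the theorem, the gap inequality (\ref{gap-est-model}) alone can also be obtained by invoking Lavine's solution of the one-dimensional gap conjecture via the convexity of $V$ for $n\ge 4$; the monotonicity proved above is strictly stronger and additionally delivers the eigenfunction ratio monotonicity $(\Psi_2/\Psi_1)' > 0$ used later in the paper.
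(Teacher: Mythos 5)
Your proposal is correct and follows essentially the same strategy as the paper: rescale to a fixed interval, differentiate the eigenvalues via a Hellmann--Feynman formula, show that $\bar{\phi}_2^2-\bar{\phi}_1^2$ changes sign exactly once using the monotonicity of the eigenfunction ratio, and pair this with the monotonicity in $s$ of the weight (your $\partial_D W$, the paper's $m_K$, which agree up to an additive constant killed by $\int(\Psi_2^2-\Psi_1^2)\,du=0$) in a Chebyshev-type rearrangement, then take $D\to 0$. The only notable differences are in execution rather than strategy: you perturb in the Schr\"odinger normal form, where the operator is self-adjoint with respect to fixed Lebesgue measure, instead of the Sturm--Liouville gauge with the $c$-dependent weight $\cs_K^{c(n-1)}(s)\,ds$, and you prove the ratio monotonicity (the paper's Lemma~\ref{g'>0}) via the Wronskian identity $W_0'=(\mu_1-\mu_2)\Psi_1\Psi_2$ rather than by the paper's contradiction argument with the second-order ODE for $\bar w$.
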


To prove this, first we derive some monotonicity properties for the eigenfunctions. For the first  eigenfunction, we observe
\begin{lemma}  \label{phi_1'<0}
 $\bar{\phi}_1(s)$	is strictly decreasing on $[0, \frac D2]$.
\end{lemma}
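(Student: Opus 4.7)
The plan is to integrate the ODE once after recasting it in self-adjoint form. Using the identity $\tn_K(s) = -\cs_K'(s)/\cs_K(s)$ recorded right after (\ref{metric}), multiplying (\ref{onedimmodel}) by $\cs_K^{n-1}(s)$ puts the equation for $\bar{\phi}_1$ into Sturm-Liouville form
\[
\left(\cs_K^{n-1}(s)\,\bar{\phi}_1'(s)\right)' = -\bar{\lambda}_1\,\cs_K^{n-1}(s)\,\bar{\phi}_1(s).
\]
The evenness of $\bar{\phi}_1$ from (\ref{phi12}) gives $\bar{\phi}_1'(0)=0$, so integrating the displayed equation from $0$ to $s$ yields
\[
\cs_K^{n-1}(s)\,\bar{\phi}_1'(s) = -\bar{\lambda}_1\int_0^s \cs_K^{n-1}(t)\,\bar{\phi}_1(t)\,dt.
\]

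Next I would read off the sign. On $(0, D/2]$ the right-hand integrand is strictly positive: $\bar{\phi}_1>0$ on $(-D/2,D/2)$ by (\ref{phi12}), and $\cs_K>0$ on $[-D/2, D/2]$ under the standing assumption $D<\pi/\sqrt{K}$ when $K>0$ (and automatically when $K\le 0$). Since $\bar{\lambda}_1>0$ (positivity of the Dirichlet eigenvalue on a bounded interval), the right-hand side is strictly negative, whence $\bar{\phi}_1'(s)<0$ on $(0, D/2]$, which is the desired strict monotonicity.

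I anticipate no substantive obstacle; the whole argument is essentially a scalar maximum principle applied to the first derivative, and every positivity fact needed has already been established in the paper. As a sanity check, a one-line contradiction argument also works: if $s_0\in (0, D/2)$ were the smallest critical point of $\bar{\phi}_1$ in $(0, D/2)$, one-sided monotonicity of $\bar{\phi}_1'$ on $(0, s_0)$ would force $\bar{\phi}_1''(s_0)\ge 0$, while (\ref{onedimmodel}) together with $\bar{\phi}_1'(s_0)=0$ would force $\bar{\phi}_1''(s_0)=-\bar{\lambda}_1\,\bar{\phi}_1(s_0)<0$.
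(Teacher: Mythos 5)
Your proof is correct and is essentially identical to the paper's: both multiply \eqref{onedimmodel} by the integrating factor $\cs_K^{n-1}(s)$ to reach the self-adjoint form, use evenness to get $\bar{\phi}_1'(0)=0$, and integrate from $0$ to conclude $\bar{\phi}_1'<0$ on $(0,\frac D2]$. The extra contradiction argument at the end is a harmless alternative but not needed.
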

\begin{proof} Multiplying (\ref{onedimmodel}) by the integrating factor $\cs_K^{n-1} (s)$, we have
 $\bar{\phi}_1(s)$ satisfies
 	\begin{equation*}
 \left(\cs_K^{n-1}(s) \bar{\phi}_1'(s) \right)' = -\bar{\lambda}_1 \cs_K^{n-1}( s) \bar{\phi}_1(s) <0.	
 	\end{equation*}
 	Since  $\bar{\phi}_1(s)$  is even, we have  $\bar{\phi}'_1(0) =0$. Integrating the above from $0$ to $0<l<\frac D2$, we have  $\bar{\phi}'_1(l) <0$.
\end{proof}

Next we show the ratio of $\bar{\phi}_1, \bar{\phi}_2$ is also monotone.
\begin{lemma} \label{g'>0}
Let \begin{equation*}
\bar{w}(s) := \frac{\bar{\phi}_2(s)}{\bar{\phi}_1(s)}.
\end{equation*}
	Then $\bar{w}(s)$ is increasing on $[0,\frac{D}{2}]$.
\end{lemma}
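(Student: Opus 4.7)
The idea is to show that the (weighted) Wronskian
\[
\tilde V(s) \;:=\; \cs_K^{n-1}(s)\bigl(\bar\phi_2'(s)\bar\phi_1(s)-\bar\phi_1'(s)\bar\phi_2(s)\bigr)
\]
is nonnegative on $[0,D/2]$; dividing by the positive quantity $\cs_K^{n-1}(s)\bar\phi_1(s)^2$ then yields $\bar w'(s)\ge 0$. Note that $\cs_K^{n-1}(s)>0$ on $[0,D/2]$ because $D<\pi/\sqrt{K}$ when $K>0$, and $\cs_K\equiv 1$ or $\cosh(\sqrt{-K}\,s)>0$ otherwise.

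The first step is to rewrite \eqref{onedimmodel} in self-adjoint form using the integrating factor $\cs_K^{n-1}(s)$, obtaining
\[
\bigl(\cs_K^{n-1}(s)\,\bar\phi_i'(s)\bigr)'=-\bar\lambda_i\,\cs_K^{n-1}(s)\,\bar\phi_i(s),\qquad i=1,2.
\]
Differentiating $\tilde V$ and using these two identities, the cross terms cancel and one gets
\[
\tilde V'(s) \;=\; \bigl(\bar\lambda_1-\bar\lambda_2\bigr)\,\cs_K^{n-1}(s)\,\bar\phi_1(s)\,\bar\phi_2(s).
\]
On $(0,D/2)$ we have $\bar\phi_1>0$ (first eigenfunction), $\bar\phi_2>0$ (since $\bar\phi_2$ is odd with $\bar\phi_2'(0)>0$ and, by Courant's nodal-domain theorem applied to the second eigenfunction on $[-D/2,D/2]$, its unique sign change must occur at $s=0$), and $\bar\lambda_1<\bar\lambda_2$. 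Hence $\tilde V$ is strictly decreasing on $(0,D/2)$.

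Finally, evaluate $\tilde V$ at the endpoints. Since $\bar\phi_1$ is even we have $\bar\phi_1'(0)=0$, and $\bar\phi_2(0)=0$, so
\[
\tilde V(0) \;=\; \bar\phi_2'(0)\,\bar\phi_1(0) \;>\;0,
\]
while the Dirichlet condition $\bar\phi_1(D/2)=\bar\phi_2(D/2)=0$ gives $\tilde V(D/2)=0$. Combined with monotonicity, this forces $\tilde V\ge 0$ on $[0,D/2]$, from which
\[
\bar w'(s) \;=\; \frac{\tilde V(s)}{\cs_K^{n-1}(s)\,\bar\phi_1(s)^2} \;\ge\; 0
\]
on the open interval, completing the proof. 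The only delicate point is justifying strict positivity of $\bar\phi_2$ on $(0,D/2)$, which rests on the nodal-domain / symmetry argument already built into the normalization \eqref{phi12}; everything else is a routine Sturmian/integrating-factor computation, and no curvature sign condition on $K$ enters beyond the standing assumption $D<\pi/\sqrt{K}$ when $K>0$.
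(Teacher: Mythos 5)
Your proof is correct, but it takes a genuinely different route from the paper's. The paper works directly with the second-order ODE satisfied by $\bar w$ (its equation \eqref{w''}), observes that $\bar w''=-(\bar\lambda_2-\bar\lambda_1)\bar w<0$ at any interior critical point of $\bar w$, and then rules out a negative excursion of $\bar w'$ by producing two critical points $a<b$ with incompatible signs of $\bar w''$; this argument needs the endpoint information $\bar w'(D/2)=0$ (quoted from Singer--Wong--Yau--Yau) together with $\bar w''(D/2)<0$ to guarantee $\bar w'>0$ just inside the right endpoint. You instead run the classical Sturm--Liouville Wronskian argument: $\tilde V=\cs_K^{n-1}(\bar\phi_2'\bar\phi_1-\bar\phi_1'\bar\phi_2)$ satisfies $\tilde V'=(\bar\lambda_1-\bar\lambda_2)\cs_K^{n-1}\bar\phi_1\bar\phi_2<0$ on $(0,D/2)$, while $\tilde V(0)>0$ and $\tilde V(D/2)=0$, forcing $\tilde V\ge 0$ (in fact $>0$ on $[0,D/2)$). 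Your computation checks out, and the positivity of $\bar\phi_2$ on $(0,D/2)$ is indeed supplied by the normalization \eqref{phi12}. What your approach buys: it is more elementary (no second-derivative test at critical points), it avoids invoking the boundary extension $\bar w'(\pm D/2)=0$ for the core argument, and it yields the slightly stronger conclusion that $\bar w'$ is strictly positive on $[0,D/2)$. The one small point to make explicit is that the quotient formula $\bar w'=\tilde V/(\cs_K^{n-1}\bar\phi_1^2)$ only makes sense on $[0,D/2)$ since $\bar\phi_1(D/2)=0$; to assert monotonicity on the closed interval you still need the continuous extension of $\bar w$ to $D/2$, which is exactly the fact the paper cites from \cite{singerwongyauyau}.
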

\begin{proof}
Since $\bar{\phi}_2$ is odd with $\bar{\phi}_2'(0) >0$, we have $\bar{w}(0) = 0$, $\bar{w}'(0) > 0$. By \cite{singerwongyauyau}, $\bar{w}$ extends to $\pm \frac D2$ and $\bar{w}'(\pm \frac{D}{2}) = 0$.	
	Direct computation shows that $\bar{w}''(s)$ satisfies
	\begin{equation} \label{w''}
	\bar{w}''(s) -(n-1)\tn_K (s) \bar{w}'(s)+2(\log\bar{\phi}_1 )'\bar{w}'(s) + (\bar{\lambda}_2-\bar{\lambda}_1)\bar{w}(s) = 0.
	\end{equation}
Since on $(0, \frac D2]$, $\bar{w}>0$, we have at points in $(0, \frac D2]$ where $\bar{w}' =0$, \eqref{w''} gives
\begin{equation} \label{g''<0}
\bar{w}'' = -(\bar{\lambda}_2-\bar{\lambda}_1) \bar{w} < 0.
\end{equation}
In particular $\bar{w}''(\frac{D}{2}) < 0$, which gives  $\bar{w}'(\frac{D}{2} - \eps) > 0$ for all small $\eps > 0$. Now we prove the lemma by showing $\bar{w}' \ge 0$ on $[0, \frac D2]$. We show this by contradiction.  Suppose
	there is some point where $\bar{w}' < 0$, since $\bar{w}'(0) >0,\ \bar{w}'(\frac{D}{2} - \eps) > 0$,   there are two points $a, b$ with $0<a<b<\frac{D}{2}$ such that $\bar{w}'(a) = 0$ and $\bar{w}'(b) = 0$ and  $\bar{w}''(a) \leq 0$ and $\bar{w}''(b) \geq 0$. This contradicts to \eqref{g''<0} that $\bar{w}''<0$ at points where $\bar{w}'=0$.   	
\end{proof}

Now we investigate the dependence of the eigenvalues $\bar{\lambda}_i (n,D,K)$ ($i =1,2$) on $D$ using perturbation theory.  We define the Sturm-Liouville operator
\begin{equation*}
L_D = -\frac{d^2}{ds^2}+(n-1)\tn_K(s)\frac{d}{ds}
\end{equation*}
with Dirichlet boundary conditions at $\pm \frac D2$. (We omit the dependence on $n,K$ since we are interested in how the eigenvalues change when $D$ varies.)
Its Sturm-Liouville normal form is given by
\begin{equation*}
L_D \phi = -\cs_K^{1-n}(s)\left(\cs_K^{n-1}(s) \phi'(s)\right)'.
\end{equation*}
Hence $L_D$ is a self-adjoint operator in the Hilbert space $L^2\left((-\frac{D}{2},\frac{D}{2}), \ \cs_K^{n-1}(s)ds\right)$. In order to work on a fixed interval $(-\frac{D}{2},\frac{D}{2})$, we note that by making a change of variable $s=ct$, the eigenvalue problem $L_{cD} \phi(s) = \lambda\, \phi(s)$ on $(-\frac{cD}{2},\frac{cD}{2})$ can be rescaled to
\begin{equation*}
	\tilde{L}_c\phi := \left(-\frac{d^2}{dt^2} + c(n-1)\tn_K(ct)\frac{d}{dt}\right)\phi(ct) = c^2\lambda \, \phi(ct)
\end{equation*}
for $t \in (-\frac{D}{2},\frac{D}{2})$. And
\begin{equation}
c^2 \bar{\lambda}_i(n,cD,K) = \lambda_i (\tilde{L}_c).  \label{lambda_c-lambda}
\end{equation}
    $\tilde{L}_c$ is an analytic family of operator in a neighborhood of $c=1$, and is self-adjoint  in the Hilbert space $L^2\left((-\frac{D}{2},\frac{D}{2}), \ \cs_K^{c(n-1)}(s)ds\right)$.  We have $\tilde{L}_1 = L_D $  and
\begin{equation*}
\tilde{L}_c - L_D = (n-1)\left[c\tn_K(cs)-\tn_K(s)\right]\frac{d}{ds}.
\end{equation*}
Let $\phi_c$ be a normalized eigenfunction associated to eigenvalue $\tilde{\lambda}_c = \lambda(\tilde{L}_c)$. Then
\begin{align*}
0=\frac{d}{dc}\left((\tilde{L}_c - \tilde{\lambda}_c)\phi_c\right)\big|_{c=1} &= \left(\frac{d\tilde{L}_c}{dc}-\frac{d\tilde{\lambda}_c}{dc}\right)\phi_c\big|_{c=1} + \left(\tilde{L}_c - \tilde{\lambda}_c\right)\frac{d\phi_c}{dc}\Big|_{c=1}.
\end{align*}
Since  $\tilde{L}_1-\tilde{\lambda}_1$ is self-adjoint in the Hilbert space $L^2\left((-\frac{D}{2},\frac{D}{2}), \ \cs_K^{n-1}(s)ds\right)$,  the second term above is zero when we inner product with eigenfunction $\phi$.  The perturbation formula of the eigenvalue $\lambda(\tilde{L}_c)$ at $c=1$ is given by
\begin{lemma}
\begin{equation}
	\frac{d\lambda(\tilde{L}_c)}{dc}\biggr|_{c=1} = \left\langle \frac{d \tilde{L}_c}{dc}\phi_c,\phi_c\right\rangle\biggr|_{c=1} = \int_{-\frac{D}{2}}^{\frac{D}{2}}  \frac{d \tilde{L}_c}{dc}\biggr|_{c=1} \phi(s)  \cdot \phi(s)  \cs_K^{n-1}(s)ds ,
\end{equation}
where $\phi$ is an eigenfunction of $L_D$  such that $\int_{-\frac{D}{2}}^{\frac{D}{2}}  \phi^2(s)   \cs_K^{n-1}(s)ds =1$ .
\end{lemma}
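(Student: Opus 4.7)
The plan is to complete the Feynman--Hellmann calculation already outlined just above the lemma. First, I would establish that the map $c \mapsto (\tilde{\lambda}_c, \phi_c)$ admits a real-analytic branch near $c=1$. For this I invoke Kato's analytic perturbation theory for self-adjoint operators: the eigenvalue $\bar{\lambda} = \bar{\lambda}_i(n,D,K)$ is simple (a standard ODE fact, already implicit in the ordering and nodal characterization of $\bar{\phi}_1, \bar{\phi}_2$ in \eqref{phi12}), and $c \mapsto \tilde{L}_c - L_D = (n-1)[c\tn_K(cs)-\tn_K(s)]\,d/ds$ is an analytic family of first-order operators since the coefficient is jointly real-analytic in $(c,s)$ on a neighborhood of $\{1\}\times[-D/2,D/2]$ (the singularity of $\tn_K$ is avoided because $D < \pi/\sqrt{K}$). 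I would then normalize $\phi_c$ in the fixed Hilbert space $\mathcal{H} := L^2\!\left((-D/2,D/2),\ \cs_K^{n-1}(s)\,ds\right)$, so that in particular $\|\phi_1\|_{\mathcal{H}} = \|\phi\|_{\mathcal{H}} = 1$.

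Second, I differentiate the eigenvalue identity $(\tilde{L}_c - \tilde{\lambda}_c)\phi_c = 0$ in $c$ and evaluate at $c=1$, giving
\[
\left(\frac{d\tilde{L}_c}{dc}\bigg|_{c=1} - \frac{d\tilde{\lambda}_c}{dc}\bigg|_{c=1}\right)\phi \;+\; (L_D - \bar{\lambda})\,\frac{d\phi_c}{dc}\bigg|_{c=1} \;=\; 0.
\]
Taking the $\mathcal{H}$-inner product against $\phi$ and using that $L_D$ is self-adjoint on $\mathcal{H}$, the last term rearranges as
\[
\left\langle (L_D - \bar{\lambda})\,\frac{d\phi_c}{dc}\bigg|_{c=1},\, \phi \right\rangle_{\!\mathcal{H}} \;=\; \left\langle \frac{d\phi_c}{dc}\bigg|_{c=1},\, (L_D - \bar{\lambda})\phi \right\rangle_{\!\mathcal{H}} \;=\; 0,
\]
and the normalization $\|\phi\|_{\mathcal{H}}^{2} = 1$ converts the $\tilde{\lambda}$-term into $d\tilde{\lambda}_c/dc|_{c=1}$. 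Writing $\bigl\langle (d\tilde{L}_c/dc)\phi,\, \phi \bigr\rangle_{\mathcal{H}}$ as an integral against the weight $\cs_K^{n-1}(s)\,ds$ gives the claimed identity.

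The only real subtlety is the appeal to analytic perturbation theory; one can alternatively read off smoothness of $\tilde{\lambda}_c$ and $\phi_c$ directly from the implicit function theorem applied to a shooting formulation of the ODE eigenvalue problem, since smooth dependence of ODE solutions on parameters is classical. Beyond that, the computation is a routine Feynman--Hellmann argument, in which the weight $\cs_K^{n-1}(s)\,ds$ is chosen precisely so that $L_D$ is symmetric in its Sturm--Liouville form and the adjoint move above is legitimate; no compatibility issue arises from the $c$-dependent weight $\cs_K^{c(n-1)}(s)\,ds$ appearing in the natural self-adjoint realization of $\tilde{L}_c$ for $c\neq 1$, because we only ever use self-adjointness at $c=1$.
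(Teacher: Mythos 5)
Your proposal is correct and follows essentially the same route as the paper: the paper likewise differentiates $(\tilde{L}_c-\tilde{\lambda}_c)\phi_c=0$ at $c=1$, pairs against $\phi$, and kills the $\frac{d\phi_c}{dc}$ term using self-adjointness of $\tilde{L}_1-\tilde{\lambda}_1$ in $L^2\left((-\frac{D}{2},\frac{D}{2}),\cs_K^{n-1}(s)\,ds\right)$, asserting analyticity of the family $\tilde{L}_c$ where you invoke Kato's theory more explicitly. Your added remarks on simplicity of the eigenvalue and on only needing self-adjointness at $c=1$ are correct refinements of the same argument, not a different proof.
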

Hence for the first two eigenvalues of $\tilde{L}_c$, $ \lambda_i(\tilde{L}_c), i =1,2$  we  obtain
\begin{eqnarray}
	\frac{d\lambda_i(\tilde{L}_c)}{dc}\biggr|_{c=1} &  = &  (n-1) \int_{-\frac{D}{2}}^{\frac{D}{2}}(\tn_K(s) + sK\cs^{-2}_K(s)) \bar{\phi}_i(s)\bar{\phi}_i'(s) \cs_K^{n-1}(s)ds  \nonumber\\
 & = & 2 (n-1) \int_{0}^{\frac{D}{2}} l_K(s)\, \bar{\phi}_i(s)\bar{\phi}_i'(s)\cs_K^{n-1}(s)ds. \label{derivformula}
\end{eqnarray}
where \begin{equation}
l_K(s) := \tn_K(s) + sK\cs_K^{-2}(s).
\end{equation}
 In \eqref{derivformula}  we used the fact that both $l_K(s)$  and  $\bar{\phi}_i \bar{\phi}_i' (i=1,2)$  are  odd.

This gives the following monotonicity formula for the first eigenvalue $ \bar{\lambda}_1(n,D,K)$.
\begin{proposition}  \label{lambda_1-mono}
\begin{equation}
\frac{d}{dD}\left(D^2 \bar{\lambda}_1(n,D,K)\right)
\begin{cases}
<0 , & K > 0, \ D \in (0, \frac{\pi}{\sqrt{K}}); \\
= 0, & K=0; \\
> 0, & K <0.
\end{cases}
\end{equation}
\end{proposition}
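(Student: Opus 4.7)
The plan is to reduce the monotonicity of $D^2\bar{\lambda}_1(n,D,K)$ to the sign of the perturbation formula \eqref{derivformula} applied to $\bar{\phi}_1$. By the rescaling identity \eqref{lambda_c-lambda},
\[
(cD)^2 \bar{\lambda}_1(n,cD,K) = D^2\,\lambda_1(\tilde{L}_c),
\]
so viewing both sides as functions of $c$ at $c=1$ and applying the chain rule,
\[
\frac{d}{dD}\!\left(D^2 \bar{\lambda}_1(n,D,K)\right) = D\cdot\frac{d\lambda_1(\tilde{L}_c)}{dc}\bigg|_{c=1} = 2D(n-1)\int_0^{D/2} l_K(s)\,\bar{\phi}_1(s)\bar{\phi}_1'(s)\,\cs_K^{n-1}(s)\,ds,
\]
where $l_K(s)=\tn_K(s)+sK\cs_K^{-2}(s)$. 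So the whole problem reduces to determining the sign of the integrand.

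Next I would read off the sign of each factor on $(0,D/2]$. Lemma~\ref{phi_1'<0} gives $\bar{\phi}_1(s)>0$ and $\bar{\phi}_1'(s)<0$, so $\bar{\phi}_1\bar{\phi}_1'<0$ throughout the integration range. The weight $\cs_K^{n-1}(s)$ is positive, since $D<\pi/\sqrt{K}$ when $K>0$ (so $D/2<\pi/(2\sqrt{K})$ keeps $\cs_K$ from vanishing) and $\cs_K>0$ unconditionally when $K\le 0$. The one remaining factor is $l_K(s)$, whose sign depends on $K$.

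Then I would inspect $l_K$ using the explicit formulas. For $K>0$ and $s\in(0,\pi/(2\sqrt{K}))$, both $\tn_K(s)=\sqrt{K}\tan(\sqrt{K}s)>0$ and $sK\cs_K^{-2}(s)>0$, so $l_K(s)>0$; the integrand is therefore strictly negative and the first case follows. For $K=0$ we have $\tn_K\equiv 0$ and the $sK\cs_K^{-2}$ term vanishes, giving $l_K\equiv 0$ and hence the derivative is $0$ (which also reflects the classical fact $D^2\bar{\lambda}_1(n,D,0)$ is independent of $D$). For $K<0$, $\tn_K(s)=-\sqrt{-K}\tanh(\sqrt{-K}s)<0$ and $sK\cs_K^{-2}(s)<0$ for $s>0$, hence $l_K(s)<0$; combined with $\bar{\phi}_1\bar{\phi}_1'<0$ this makes the integrand strictly positive and yields the third case.

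There is no real obstacle: once the perturbation formula \eqref{derivformula} is in hand, the proof is a sign check on three elementary quantities. The only spot requiring any care is ensuring $\cs_K(s)$ and $\tn_K(s)$ stay well-defined on $[0,D/2]$, which is precisely guaranteed by the hypothesis $D<\pi/\sqrt{K}$ when $K>0$, and is automatic for $K\le 0$. I would organize the proof by stating the reduction to the integral, then splitting into the three cases for $K$.
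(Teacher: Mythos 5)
Your proposal is correct and follows essentially the same route as the paper: the rescaling identity \eqref{lambda_c-lambda} converts the $D$-derivative of $D^2\bar{\lambda}_1$ into $D\cdot\frac{d\lambda_1(\tilde{L}_c)}{dc}\big|_{c=1}$, and the sign is then read off from \eqref{derivformula} using the sign of $l_K$ together with $\bar{\phi}_1>0$ and $\bar{\phi}_1'<0$ from Lemma~\ref{phi_1'<0}. No gaps.
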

\begin{proof}
By \eqref{lambda_c-lambda},
\begin{align*}
		\frac{d\lambda_1(\tilde{L}_c)}{dc}\biggr|_{c=1} &= \frac{d}{dc}(c^2\bar{\lambda}_1(n,cD,K))\biggr|_{c=1}\\
		&=\frac{1}{D}\frac{d}{dD}(D^2\bar{\lambda}_1(n,D,K)).
	\end{align*}
Since  $l_K(s)$ satisfies
\begin{equation*}
l_K(s)
\begin{cases}
> 0 , & K > 0, \ D \in (0, \frac{\pi}{\sqrt{K}}); \\
= 0, & K=0; \\
< 0, & K <0.
\end{cases}
\end{equation*}
On $(0, \frac D2)$, by Lemma \ref{phi_1'<0},   $\bar{\phi}_1' < 0$,
 since	$\bar{\phi}_1 > 0$,  the result follows from  \eqref{derivformula}.
	\end{proof}

Now we are ready to prove Theorem~\ref{gap-mono}.
\begin{proof}[Proof of Theorem~\ref{gap-mono}]
For convenience, we denote
\begin{align*}
&m_K(s) := \frac{l_K'(s)}{2} = K\cs_K^{-2}(s)(1+s\tn_K(s)).
\end{align*}
Note that $l_K(s)\tn_K(s) = m_K(s) -K$.  Using integration by parts and that $l(0) = 0, \ \bar{\phi}_i(\frac D2) =0$,  we rewrite  \eqref{derivformula}  as
\begin{align*}
\frac{d\lambda_i(\tilde{L}_c)}{dc}\biggr|_{c=1}
&= (n-1)\int_0^{\frac{D}{2}}l_K(s)(\bar{\phi}^2_i(s))'\cs_K^{n-1}(s)ds \\
&=-(n-1)\int_0^{\frac{D}{2}}\bar{\phi}^2_i(s)\left[ l_K'(s)-(n-1)l_K(s)\tn_K(s)\right] \cs_K^{n-1}(s)ds\\
&=-(n-1)\int_0^{\frac{D}{2}}\bar{\phi}^2_i(s)\left[ 2m_K(s)-(n-1)(m_K(s)-K)\right] \cs_K^{n-1}(s)ds\\
&=(n-1) \int_0^{\frac{D}{2}}\bar{\phi}^2_i(s)\left[(n-3)m_K(s) -(n-1)K\right] \cs_K^{n-1}(s) ds.
\end{align*}
Hence
\begin{align}  \label{lambda21}
\frac{d\lambda_2(\tilde{L}_c)}{dc}\biggr|_{c=1} - \frac{d\lambda_1(\tilde{L}_c)}{dc}\biggr|_{c=1}
&  = (n-1) (n-3)  \int_0^{\frac{D}{2}}  m_K(s) \left[ \bar{\phi}^2_2(s)  -  \bar{\phi}^2_1(s)  \right] \cs_K^{n-1}(s) ds.
  \end{align}
  Note that $m_K(s)$ is increasing when $K > 0, \ D \in (0, \frac{\pi}{\sqrt{K}})$. $m_K'(s)$ has exactly one zero on $(0, \infty)$ when $K<0$. Denote the zero point by $a(K)$. Then we will show
\begin{equation}  \label{phi2>1}
 \int_{0}^{\frac{D}{2}} m_K(s) \left[ \bar{\phi}^2_2(s) - \bar{\phi}^2_1(s) \right] \cs_K^{n-1}(s)ds \begin{cases}
 > 0 , & K > 0, \ D \in (0, \frac{\pi}{\sqrt{K}}); \\
 = 0, & K=0; \\
 >  0, & K <0, \ D \in (0, a(K)].
 \end{cases}
\end{equation}
First we claim  $\bar{\phi}_1(s) = \bar{\phi}_2(s)$ at exactly one point in $[0,\frac{D}{2})$. Since \begin{equation}  \label{phi1=2}
\int_0^{\frac D2} \bar{\phi}^2_1(s) \cs_K^{n-1}(s)ds =  \int_0^{\frac D2} \bar{\phi}^2_2(s)  \cs_K^{n-1}(s)ds,
\end{equation}
and $\bar{\phi}_1,\bar{\phi}_2 \geq 0$ on $[0,\frac{D}{2}]$, there is at least one point in $[0,\frac{D}{2})$ such that $\bar{\phi}_1(s) = \bar{\phi}_2(s)$. By Lemma \ref{g'>0}, $\frac{\bar{\phi}_2'}{\bar{\phi}_2} \ge \frac{\bar{\phi}_1'}{\bar{\phi}_1}$. Therefore
 \begin{equation*}
 \bar{\phi}_1(\bar{\phi}_1' - \bar{\phi}_2') \le (\bar{\phi}_1 - \bar{\phi_2})\bar{\phi}_1',
 \end{equation*}
 and $\bar{\phi}_1'-\bar{\phi}_2' \le 0$ when $\bar{\phi}_1=\bar{\phi}_2$. If  $\bar{\phi}_1(s) = \bar{\phi}_2(s)$ at more than one point in $[0,\frac{D}{2})$, since $\bar{\phi}_1(0) - \bar{\phi}_2(0) >0$, at the second such point we get $\bar{\phi}_1'-\bar{\phi}_2' > 0$ which is a contradiction.  Let $b \in (0, \frac D2)$ be the point such that $\bar{\phi}_1(b) = \bar{\phi}_2(b)$.  We have
 $$\bar{\phi}_2^2(s)-\bar{\phi}^2_1 (s)
 \begin{cases}  \le 0, &  s\in[0, b] \\
  \ge 0, & s \in [b, \frac D2 ].
 \end{cases}
 $$
When $K>0$, $m_K(s)$ is increasing on $[0, \frac D2]$.
Hence \begin{eqnarray*}
\lefteqn{ \int_{0}^{\frac{D}{2}} m_K(s) \left[ \bar{\phi}^2_2(s) - \bar{\phi}^2_1(s) \right] \cs_K^{n-1}(s)ds }  \\
 & = & \int_{0}^{b} m_K(s) \left[ \bar{\phi}^2_2(s) - \bar{\phi}^2_1(s) \right] \cs_K^{n-1}(s)ds + \int_{b}^{\frac{D}{2}} m_K(s) \left[ \bar{\phi}^2_2(s) - \bar{\phi}^2_1(s) \right] \cs_K^{n-1}(s)ds \\
& \ge & \int_{0}^{b} m_K(b) \left[ \bar{\phi}^2_2(s) - \bar{\phi}^2_1(s) \right] \cs_K^{n-1}(s)ds + \int_{b}^{\frac{D}{2}} m_K(b) \left[ \bar{\phi}^2_2(s) - \bar{\phi}^2_1(s) \right] \cs_K^{n-1}(s)ds \\
& = &  m_K(b)  \int_{0}^{\frac{D}{2}} \left[ \bar{\phi}^2_2(s) - \bar{\phi}^2_1(s) \right] \cs_K^{n-1}(s)ds = 0.
\end{eqnarray*}
This proves \eqref{phi2>1} when $K>0$. When $K<0$, we have the same inequality since  $m_K (s)$ is increasing on $[0, a(K)]$. Clearly when $K=0$, so is $m_K$ thus we have proved \eqref{phi2>1}.
Recall
		\[
			\frac{1}{D}\frac{d}{dD}\left(D^2(\bar{\lambda}_2(n,D,K)-\bar{\lambda}_1(n,D,K))\right)  =
			\frac{d\lambda_2(\tilde{L}_c)}{dc}\biggr|_{c=1} - \frac{d\lambda_1(\tilde{L}_c)}{dc}\biggr|_{c=1}. \]
Now the monotonicity part of	Theorem~\ref{gap-mono} follows from 		 \eqref{phi2>1} and \eqref{lambda21}.

As $D \rightarrow 0$ the gap approaches to the gap in the 1-dim Euclidean case. Namely  $$\lim_{D \rightarrow 0} D^2(\bar{\lambda}_2(n,D,K)-\bar{\lambda}_1(n,D,K)) = 3 \pi^2,$$ so we have estimate \eqref{gap-est} by the monotonicity.
\end{proof}

\begin{corollary}
	We also have the monotonicity of the ratio. Namely for $n\ge 3$, $D \in (0, \pi/\sqrt{K})$ when $K>0$, $D \in (0, a(K))$ when $K<0$, we have
	\begin{equation*}
		\frac{d}{dD}\left(\frac{\bar{\lambda}_2(n,D,K)}{\bar{\lambda}_1(n,D,K)}\right) \ge 0.
	\end{equation*}
\end{corollary}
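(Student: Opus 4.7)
The plan is to express the derivative of the ratio in terms of the perturbation quantities $\dot{\lambda}_i := \tfrac{d}{dc}\lambda_i(\tilde L_c)\big|_{c=1}$ already controlled by Theorem~\ref{gap-mono} and Proposition~\ref{lambda_1-mono}. A direct calculation gives, with $' = d/dD$,
\[
\frac{d}{dD}\!\left(\frac{\bar{\lambda}_2}{\bar{\lambda}_1}\right)
=\frac{\bar{\lambda}_1\,\bar{\lambda}_2' - \bar{\lambda}_2\,\bar{\lambda}_1'}{\bar{\lambda}_1^{\,2}}.
\]
The rescaling identity $\dot{\lambda}_i = \tfrac{1}{D}\tfrac{d}{dD}(D^{2}\bar{\lambda}_i) = 2\bar{\lambda}_i + D\,\bar{\lambda}_i'$ from the proof of Proposition~\ref{lambda_1-mono} lets one replace $D\bar{\lambda}_i'$ by $\dot{\lambda}_i - 2\bar{\lambda}_i$ in the numerator; the $\bar{\lambda}_1\bar{\lambda}_2$ cross-terms cancel cleanly and one obtains
\[
D\bigl(\bar{\lambda}_1\bar{\lambda}_2' - \bar{\lambda}_2\bar{\lambda}_1'\bigr)
= \bar{\lambda}_1\dot{\lambda}_2 - \bar{\lambda}_2\dot{\lambda}_1,
\]
so the corollary is equivalent to the single inequality $\bar{\lambda}_1\dot{\lambda}_2 \ge \bar{\lambda}_2\dot{\lambda}_1$.

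Next I would split
\[
\bar{\lambda}_1\dot{\lambda}_2 - \bar{\lambda}_2\dot{\lambda}_1
= \bar{\lambda}_1(\dot{\lambda}_2 - \dot{\lambda}_1) - (\bar{\lambda}_2-\bar{\lambda}_1)\,\dot{\lambda}_1
\]
and feed in the two inputs of this section: Theorem~\ref{gap-mono} gives $\dot{\lambda}_2-\dot{\lambda}_1 = \tfrac{1}{D}\tfrac{d}{dD}\bigl(D^{2}(\bar{\lambda}_2-\bar{\lambda}_1)\bigr)\ge 0$ for $n\ge 3$ in the stated range, while Proposition~\ref{lambda_1-mono} gives $\operatorname{sign}(\dot{\lambda}_1) = -\operatorname{sign}(K)$. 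For $K\ge 0$ both summands are nonnegative and the claim is immediate; equivalently, for $K>0$ one can directly write $\bar{\lambda}_2/\bar{\lambda}_1 = 1 + D^{2}(\bar{\lambda}_2-\bar{\lambda}_1)/(D^{2}\bar{\lambda}_1)$, a nondecreasing nonnegative function divided by a nonincreasing positive one.

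The main obstacle is the case $K<0$, $D\in(0,a(K)]$, where the two summands of the splitting pull in opposite directions. For this case the plan is to return to the explicit formula
\[
\dot{\lambda}_i = (n-1)(n-3)A_i - \tfrac{(n-1)^{2}}{2}K,\qquad
A_i := \int_0^{D/2}\bar{\phi}_i^{\,2}\,m_K(s)\,\cs_K^{n-1}(s)\,ds,
\]
derived in the proof of Theorem~\ref{gap-mono}, which yields
\[
\bar{\lambda}_1\dot{\lambda}_2 - \bar{\lambda}_2\dot{\lambda}_1
= (n-1)(n-3)\bigl(\bar{\lambda}_1 A_2 - \bar{\lambda}_2 A_1\bigr)
+ \tfrac{(n-1)^{2}K}{2}(\bar{\lambda}_2-\bar{\lambda}_1),
\]
and then to dominate the negative correction by the first term. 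This amounts to an eigenvalue-weighted strengthening of the crossing-point comparison \eqref{phi2>1}, which I would attempt using the monotonicity of $\bar{w}=\bar{\phi}_2/\bar{\phi}_1$ from Lemma~\ref{g'>0} together with the monotonicity of $m_K$ on $(0,a(K)]$. The hard step is precisely this weighted refinement: the plain split into $[0,b]$ and $[b,D/2]$ that sufficed for Theorem~\ref{gap-mono} carries no information about how the regional contributions are weighted by $\bar{\lambda}_1$ versus $\bar{\lambda}_2$, so a finer quantitative relation between the $A_i$'s and the $\bar{\lambda}_i$'s is needed.
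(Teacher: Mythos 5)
Your algebra is correct, and for $K\ge 0$ your argument is exactly the paper's. The authors' proof is the single identity
\begin{equation*}
\frac{d}{dD}\left(\frac{\bar{\lambda}_2}{\bar{\lambda}_1}\right)
=\frac{1}{D\bar{\lambda}_1}\left(\dot{\lambda}_2-\dot{\lambda}_1\right)-\frac{\bar{\lambda}_2-\bar{\lambda}_1}{D\bar{\lambda}_1^{2}}\,\dot{\lambda}_1,
\qquad \dot{\lambda}_i:=\frac{d\lambda_i(\tilde{L}_c)}{dc}\biggr|_{c=1},
\end{equation*}
which is your relation $D\bar{\lambda}_1^{2}\,\frac{d}{dD}(\bar{\lambda}_2/\bar{\lambda}_1)=\bar{\lambda}_1(\dot{\lambda}_2-\dot{\lambda}_1)-(\bar{\lambda}_2-\bar{\lambda}_1)\dot{\lambda}_1$, followed by the citation of \eqref{phi2>1}, \eqref{lambda21} and Proposition~\ref{lambda_1-mono}. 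For $K>0$ both summands are nonnegative ($\dot{\lambda}_2-\dot{\lambda}_1\ge 0$ for $n\ge 3$ and $\dot{\lambda}_1<0$), so that part of your write-up is complete and identical in substance to the paper's; your alternative remark that $\bar{\lambda}_2/\bar{\lambda}_1=1+D^{2}(\bar{\lambda}_2-\bar{\lambda}_1)/(D^{2}\bar{\lambda}_1)$ is a nondecreasing function over a positive nonincreasing one is a clean equivalent packaging of the same two monotonicities.

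For $K<0$ you have put your finger on a genuine issue: Proposition~\ref{lambda_1-mono} gives $\dot{\lambda}_1>0$ there, so the second term above is strictly negative and the three cited facts do not by themselves yield the conclusion. The paper's proof does not address this sign reversal --- it simply asserts that the result follows --- so the gap you describe is present in the published argument as well, not only in your proposal. Your intended remedy (establishing $\bar{\lambda}_1 A_2-\bar{\lambda}_2 A_1\ge 0$ strongly enough to absorb the correction $\tfrac{(n-1)^{2}K}{2}(\bar{\lambda}_2-\bar{\lambda}_1)$, using Lemma~\ref{g'>0} and the monotonicity of $m_K$ on $(0,a(K)]$) is a reasonable direction, but it is not carried out, and the plain two-interval comparison behind \eqref{phi2>1} does not supply the eigenvalue weights you need. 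As it stands, then, your proof (like the paper's) is complete only for $K\ge 0$; to close the statement as written you must either supply the missing weighted estimate or restrict the corollary to $K\ge 0$.
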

\begin{proof}
	By direct computation
	\begin{align*}
		\frac{d}{dD}\left(\frac{\bar{\lambda}_2(n,D,K)}{\bar{\lambda}_1(n,D,K)}\right)  &= \frac{1}{D}\left(\frac{d}{dc}\frac{\bar{\lambda}_2(n,cD,K)}{\bar{\lambda}_1(n,cD,K)}\right)\biggr|_{c=1} \\
		&=\frac{1}{D\tilde{\lambda}_1}\left(\frac{d\lambda_2(\tilde{L}_c)}{dc}\biggr|_{c=1} - \frac{d\lambda_1(\tilde{L}_c)}{dc}\biggr|_{c=1}\right) - \frac{(\tilde{\lambda}_2-\tilde{\lambda}_1)}{D\tilde{\lambda}_1^2}\frac{d\lambda_1(\tilde{L}_c)}{dc}\biggr|_{c=1},
	\end{align*}
	where $\tilde{\lambda}_i = \lambda_i(\tilde{L}_c)$. The result now follows from \eqref{phi2>1},  \eqref{lambda21} and Proposition~\ref{lambda_1-mono}.
\end{proof}

Let $f = (\log \bar{\phi}_1)'$. From Lemma~\ref{phi_1'<0} we have $f <0$ on $[0, \frac D2]$. We will also need the following equation for $f$.
\begin{lemma}  $f = (\log \bar{\phi}_1)'$ satisfies
	\begin{equation}  \label{f''}
	f''  + 2ff' - \tn_K (s) \left[  (n+1) f'  +2 \bar{\lambda}_1 + 2f^2\right]  -(n-1)(K-\tn_K^2(s))f  =0.
	\end{equation}	
\end{lemma}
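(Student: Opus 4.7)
The plan is to reduce (\ref{f''}) to a differentiation of a first-order Riccati equation for $f$ obtained directly from (\ref{onedimmodel}). Since $\bar{\phi}_1$ satisfies $\bar{\phi}_1'' - (n-1)\tn_K(s)\bar{\phi}_1' + \bar{\lambda}_1 \bar{\phi}_1 = 0$, dividing through by $\bar{\phi}_1$ and using $\bar{\phi}_1''/\bar{\phi}_1 = f' + f^2$ (which follows from $f = \bar{\phi}_1'/\bar{\phi}_1$) immediately yields
\begin{equation*}
f' + f^2 - (n-1)\tn_K(s)\, f + \bar{\lambda}_1 = 0. \tag{$\dagger$}
\end{equation*}

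Next I would record the elementary identity $\tn_K'(s) = K + \tn_K^2(s)$, which follows from $\tn_K = -\cs_K'/\cs_K$ together with $\cs_K'' + K\cs_K = 0$. Differentiating $(\dagger)$ in $s$ and substituting this formula gives
\begin{equation*}
f'' + 2ff' - (n-1)\tn_K(s)\, f' - (n-1)(K + \tn_K^2(s))\, f = 0. \tag{$\ddagger$}
\end{equation*}

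To arrive at the precise form of (\ref{f''}), I would then subtract $2\tn_K(s)$ times $(\dagger)$ (which vanishes) from $(\ddagger)$. The $\tn_K f'$ terms combine to produce coefficient $-(n+1)$, the contributions $-2\tn_K f^2$ and $-2\tn_K \bar{\lambda}_1$ appear, and the $-(n-1)\tn_K^2 f$ term in $(\ddagger)$ gets augmented by $+2(n-1)\tn_K^2 f$ from the Riccati multiple, flipping the sign inside the final bracket from $(K + \tn_K^2)$ to $(K - \tn_K^2)$. Grouping the $\tn_K(s)$ terms on one side yields exactly (\ref{f''}).

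The derivation is entirely routine; there is no real obstacle. The only mild observation is that the asymmetric form of (\ref{f''}) — coefficient $(n+1)$ rather than $(n-1)$ on $\tn_K f'$, and $(K - \tn_K^2)$ rather than $(K + \tn_K^2)$ — is what one obtains by adding precisely the multiple $-2\tn_K$ of $(\dagger)$ to its derivative. Presumably this form of the equation is the one best adapted to the maximum principle / two-point function arguments that will appear later, so I would leave the final equation written as in (\ref{f''}) rather than in the more symmetric shape $(\ddagger)$.
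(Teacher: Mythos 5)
Your proposal is correct and is essentially the paper's own argument: both derive the Riccati equation $f' = (n-1)\tn_K f - \bar{\lambda}_1 - f^2$ from \eqref{onedimmodel}, differentiate it using $\tn_K'(s) = K + \tn_K^2(s) = K\cs_K^{-2}(s)$, and then use the Riccati equation again to recast the result in the stated form. The only difference is presentational --- you identify the needed correction explicitly as adding $-2\tn_K(s)$ times the Riccati identity, whereas the paper reaches the same linear combination via double-angle identities for $\sn_K$ and $\cs_K$; your organization is arguably cleaner.
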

\begin{proof}
	Since
	\begin{equation} \label{f'}
	f' = \frac{\bar{\phi}_1''}{\bar{\phi}_1}-\left(\frac{\bar{\phi}_1'}{\bar{\phi}_1}\right)^2 \\
	= (n-1)\tn_K f - \bar{\lambda}_1 - f^2,
	\end{equation}
	we have
	\begin{align*}
	f'' - (n-1)\frac{K}{\cs_K^2} f - (n-1)\tn_K f' + 2ff' = 0.
	\end{align*}
	Using equation \eqref{f'} we can rewrite this as
	\begin{align*}
	0&=f'' - \frac{f'+\bar{\lambda}_1+f^2}{\sn_K\cs_K} - (n-1)\tn_Kf' + 2ff' \\
	&=f'' - 2\frac{f'+\bar{\lambda}_1+f^2}{\sn_K(2s)} - (n-1)\tn_Kf' + 2ff' \\
	&=f'' - 2\left(\tn_K + \frac{\cs_K(2s)}{\sn_K(2s)}\right)(f'+\bar{\lambda}_1 + f^2) - (n-1)\tn_K f'+2ff' \\
	&=  f'' - (n+1)\tn_K f' + 2ff' - 2\tn_K\bar{\lambda}_1- 2\tn_K f^2 -2\frac{\cs_K(2s)}{\sn_K(2s)}(f' + \bar{\lambda}_1 + f^2)\\
	&=  f'' + 2ff'  - (n+1)\tn_K f' - 2\tn_K\bar{\lambda}_1 - 2\tn_K f^2 -2(n-1)\frac{\cs_K(2s)}{\sn_K(2s)}\tn_K f\\
		&=  f'' + 2ff'  - (n+1)\tn_K f' - 2\tn_K\bar{\lambda}_1 - 2\tn_K f^2 -(n-1)(K-\tn_K^2(s))f.
	\end{align*}
\end{proof}

\section{Log-concavity of the first eigenfunction}
In this section we prove Theorem~\ref{log-con}. First we show the modulus of log-concavity is preserved for $u = e^{-\lambda_1 t} \phi_1$, where $\phi_1$ is a positive first eigenfunction of the Laplacian with Dirichlet boundary condition with eigenvalue $\lambda_1$.
\subsection{Preservation of Initial Modulus}

Recall
\begin{definition}
Given a semi-convex function $u$ on a domain $\Omega$,  a function $\psi: [0, +\infty) \rightarrow \mathbb R$ is called a modulus of concavity for $u$ if  for every $x \not= y$ in $\Omega$
\begin{equation}
 \langle \nabla  u (y), \gamma'(\tfrac{d}{2}) \rangle - \langle \nabla u(x),\gamma'(-\tfrac{d}{2})\rangle \leq 2 \psi \left(\frac{d(x,y)}{2}\right),
 \end{equation}
 where $\gamma$ is the unit normal minimizing geodesic with $\gamma(-\tfrac{d}{2}) =x$ and $\gamma(\tfrac{d}{2}) = y$, $d = d(x,y)$.
\end{definition}

\begin{theorem}  \label{log-con-preserve}
	Let $\Omega \subset \mathbb M^n_K$ be a uniformly convex domain with diameter $D$, where $K \ge 0$, $D\le D_0 < \pi/\sqrt{K}$.
Let $\phi_1$ be a positive first eigenfunction of the Laplacian on $\Omega$ with Dirichlet boundary condition associated to eigenvalue $\lambda_1$, and $u: \Omega \times \mathbb{R}_+ \to \mathbb{R}$ is given by $u(x,t) = e^{-\lambda_1 t} \phi_1(x)$. Suppose $\psi_0:[0,\frac{D}{2}] \to \mathbb{R}$ is a Lipschitz continuous modulus of concavity for $\log \phi_1$.  If $\psi \in C^0([0,D/2])\times \mathbb{R}_+)\cap C^\infty([0,D/2] \times (0,\infty) )$ is a solution of
\begin{equation}  \label{psi-cond}
\begin{cases}
\frac{\dd\psi(s,t)}{\dd t}\geq \psi''(s,t) +2 \psi(s,t) \psi'(s,t) - \tn_K(s) \left[ (n+1) \psi'(s,t)+2\lambda_1 +2 \psi^2(s,t)\right] \\ \hspace*{3in}  -(n-1)(K-\tn_K^2(s)) \psi(s,t), \\
\psi(\cdot,0) = \psi_0(\cdot);\\
\psi(0,t) = 0,
\end{cases}
\end{equation}
where $\psi'=\frac{\dd}{\dd s}\psi$ and $\psi''=\frac{\dd^2}{\dd s^2}\psi$,
then $\psi(\cdot,t)$ is a modulus of concavity for $\log u (\cdot,t)$ for each $t \geq 0$.
\end{theorem}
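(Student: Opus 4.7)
The plan is to apply a two-point maximum principle in the spirit of Andrews--Clutterbuck. Define
\[
Q(x,y,t):=\log u(y,t)-\log u(x,t)-2\psi\!\left(\tfrac{d(x,y)}{2},t\right)
\]
on $\{(x,y,t): x,y\in\Omega,\ x\neq y,\ t\ge 0\}$; proving $Q\le 0$ everywhere is exactly the conclusion. The initial condition gives $Q(\cdot,\cdot,0)\le 0$; as $x$ or $y$ approaches $\partial\Omega$ the Dirichlet condition forces $\log u\to -\infty$ while $\psi$ stays bounded, so $Q\to-\infty$; and on the diagonal $x=y$ the hypothesis $\psi(0,t)=0$ together with the Lipschitz bound keeps $Q$ controlled. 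Thus any violation of $Q\le 0$ must be attained at an interior space-time point.

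First I would perturb $\psi$ to $\psi^\varepsilon:=\psi+\varepsilon(1+t)$, which is a strict supersolution of \eqref{psi-cond} for small $\varepsilon>0$; showing $Q^\varepsilon<0$ and then letting $\varepsilon\to 0$ yields the claim. Suppose for contradiction that $Q^\varepsilon$ first touches zero at $(x_0,y_0,t_0)$ with $x_0\ne y_0$ in $\Omega$, and let $\gamma:[-\tfrac{d}{2},\tfrac{d}{2}]\to\Omega$ be the minimizing geodesic from $x_0$ to $y_0$, which stays in $\Omega$ by convexity. The max-principle data $\partial_tQ^\varepsilon\ge 0$, $\nabla_{(x,y)}Q^\varepsilon=0$, $D^2_{(x,y)}Q^\varepsilon\le 0$ then hold. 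Using $\nabla_yd=\gamma'(\tfrac{d}{2})$ and $\nabla_xd=-\gamma'(-\tfrac{d}{2})$, the first-order conditions force
\[
\nabla\log u(x_0)=(\psi^\varepsilon)'\,\gamma'(-\tfrac{d}{2}),\qquad \nabla\log u(y_0)=(\psi^\varepsilon)'\,\gamma'(\tfrac{d}{2}),
\]
so $|\nabla\log u|^2=((\psi^\varepsilon)')^2$ at both endpoints. Since $u=e^{-\lambda_1t}\phi_1$ solves $\partial_tu=\Delta u$, the function $v:=\log u$ satisfies $\partial_tv=\Delta v+|\nabla v|^2=-\lambda_1$; consequently $\partial_tQ^\varepsilon=-2\psi^\varepsilon_t$, and so $\psi^\varepsilon_t(\tfrac{d}{2},t_0)\le 0$.

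For the spatial second-order information I would pick an orthonormal frame $\{e_n=\gamma'(-\tfrac{d}{2}),e_1,\dots,e_{n-1}\}$ at $x_0$ and its parallel transport $\{\tilde e_n=\gamma'(\tfrac{d}{2}),\tilde e_i\}$ at $y_0$, then test $D^2Q^\varepsilon\le 0$ against two kinds of variations. The antisymmetric radial variation $\alpha(s)=\gamma(-\tfrac{d}{2}-s)$, $\beta(s)=\gamma(\tfrac{d}{2}+s)$, along which $d$ extends linearly, yields $\Hess v(y_0)(\tilde e_n,\tilde e_n)-\Hess v(x_0)(e_n,e_n)\le 2(\psi^\varepsilon)''$. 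For each $i$, the parallel orthogonal variation generated by the constant-curvature Jacobi field $J_i(s)=\tfrac{\cs_K(s)}{\cs_K(d/2)}E_i(s)$ has second variation of arc-length equal to $-2\tn_K(\tfrac{d}{2})$ and yields $\Hess v(y_0)(\tilde e_i,\tilde e_i)-\Hess v(x_0)(e_i,e_i)\le -2\tn_K(\tfrac{d}{2})(\psi^\varepsilon)'$. Summing these $n$ inequalities gives
\[
\Delta v(y_0)-\Delta v(x_0)\le 2(\psi^\varepsilon)''-2(n-1)\tn_K(\tfrac{d}{2})(\psi^\varepsilon)'.
\]

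The closing step substitutes $\Delta v=-\lambda_1-|\nabla v|^2$ at $x_0$ and $y_0$ (so the left side of the last display vanishes) and combines it with $\partial_tQ^\varepsilon\ge 0$ and the strict form of \eqref{psi-cond}. The coefficients $-2\lambda_1\tn_K$ and $(n-1)(K-\tn_K^2)\psi$ in \eqref{psi-cond} are tailored so as to absorb precisely the eigenvalue-equation contribution and the Jacobi-field factor $\cs_K/\cs_K(\tfrac{d}{2})$; combined with the sign information on $\psi$ and $\psi'$ carried over from the initial modulus (by the same argument used in Lemma \ref{phi_1'<0} for $f=(\log\bar\phi_1)'$, $f'=-\lambda_1-f^2<0$), the resulting inequality contradicts strict supersolvability, and sending $\varepsilon\to 0$ yields $Q\le 0$. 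The main obstacle is this final algebraic step: one has to simultaneously track how the $n-1$ orthogonal Jacobi-field contributions and the radial contribution combine with the eigenvalue identity to match the right-hand side of \eqref{psi-cond}, with the signs actually working out. The restriction $K\ge 0$ enters exactly here: for $K<0$ the Jacobi field becomes $\cosh$-type and the required sign of $K-\tn_K^2$ flips, so the analogous estimate no longer closes, as foreshadowed in the introduction.
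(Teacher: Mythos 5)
Your two-point function is the wrong one, and this is not a cosmetic issue. The theorem asserts that $\psi(\cdot,t)$ is a \emph{modulus of concavity} for $\log u$, which by Definition 3.1 means
\begin{equation*}
\langle \nabla \log u (y,t),\gamma'(\tfrac{d}{2})\rangle-\langle \nabla \log u (x,t),\gamma'(-\tfrac{d}{2})\rangle \le 2\psi\left(\tfrac{d(x,y)}{2},t\right),
\end{equation*}
i.e.\ a bound on the difference of \emph{directional derivatives} of $\log u$ at the two endpoints (an integrated Hessian bound). You instead set $Q=\log u(y,t)-\log u(x,t)-2\psi$, a difference of \emph{values}, so that ``$Q\le 0$'' is a modulus-of-continuity statement for $\log u$ --- a different theorem. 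The paper's proof works with $Z=\langle\nabla\log u(y),\gamma'(\tfrac d2)\rangle-\langle\nabla\log u(x),\gamma'(-\tfrac d2)\rangle-2\psi$, and everything downstream depends on this: the first and second spatial variations of $Z$ involve \emph{third} derivatives of $\log u$ and the variation $\nabla_r T$, $\nabla_r\nabla_r T$ of the unit tangent vector of the connecting geodesic; the time derivative is handled via $\partial_t\nabla\omega=\nabla\Delta\omega+\nabla\|\nabla\omega\|^2$ and the Bochner--Weitzenb\"ock formula $\Delta\nabla\omega-\Ric(\nabla\omega,\cdot)=\nabla\Delta\omega$; and it is precisely this combination that produces the terms $2\psi\psi'$, $-2\lambda_1\tn_K$, $-2\tn_K\psi^2$ and $-(n-1)(K-\tn_K^2)\psi$ on the right of \eqref{psi-cond}. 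With your value-based $Q$ the closing step degenerates (the first-order conditions give $\|\nabla\log u(x_0)\|^2=\|\nabla\log u(y_0)\|^2$, so $\Delta\log u(y_0)-\Delta\log u(x_0)=0$ and none of those structural terms appear), so the ``final algebraic step'' you defer cannot match \eqref{psi-cond}.

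Two further concrete errors follow from the same confusion. First, your boundary analysis is false: with $Q=\log u(y)-\log u(x)-2\psi$, letting $x\to\partial\Omega$ with $y$ fixed in the interior sends $-\log u(x)\to+\infty$, hence $Q\to+\infty$, not $-\infty$; the correct (gradient) quantity also does not tend to $-\infty$ automatically, which is why the paper needs Lemmas \ref{Hessianest} and \ref{boundary-cover-lemma} (exploiting uniform convexity, the Hopf lemma, and the sign of $\langle\nabla u,\gamma'\rangle$ at the boundary) to control $Z$ near $\partial\hat\Omega$. Second, the perturbation $\psi^\varepsilon=\psi+\varepsilon(1+t)$ destroys the condition $\psi(0,t)=0$ that you rely on to control the diagonal $x=y$; the paper instead perturbs the two-point function itself, $Z_\varepsilon=Z-\varepsilon e^{Ct}$, with $C$ chosen at the end so that the $\varepsilon$-terms have the right sign (this choice is also where $K\ge 0$ is used, via $-2\tn_K(\tfrac{d_0}{2})(\langle\nabla\omega(y_0),e_n\rangle^2+\langle\nabla\omega(x_0),e_n\rangle^2)\le -4\tn_K\psi^2-4\tn_K\psi\varepsilon e^{Ct_0}$, not via a sign flip of $\cs_K$ in the Jacobi field as you suggest). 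The proposal therefore has a genuine gap: it proves (at best) a different estimate and omits the essential third-derivative/Bochner mechanism that the stated theorem requires.
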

\begin{remark}
	Almost all of the proof works for general $K$, some parts even for general manifolds, except the  step in the end.
\end{remark}

\begin{proof} We note that $u: \Omega \times \mathbb{R}_+ \to \mathbb{R}$ satisfies  the Laplacian equation
	\begin{equation}  \label{Lap-u}
	\Delta u = -\lambda_1 u \ \ \mbox{on} \ \ \Omega, \ \ \  u|_{\dd\Omega} =0,
	\end{equation}
	and the heat equation 	\begin{equation}\label{Dirichlprobforphi}
	\begin{cases}
	\frac{\dd u}{\dd t} = \Delta u & \text{ on } \Omega \times \mathbb R_+;\\
	u= 0 & \text{ on } \dd\Omega \times \mathbb R_+;\\
	u(x,0) = \phi_1.
	\end{cases}
	\end{equation}
	These are the two properties we need for $u$.
	
	For every $x \not= y$ in $\Omega$, let
\begin{align}
Z(x,y,t)
&:=\left\langle\nabla \log u (y,t),\gamma'\left(\tfrac{d}{2}\right)\right\rangle
    -\left\langle\nabla \log u (x,t),\gamma'\left(-\tfrac{d}{2}\right)\right\rangle
 -2\psi\left(\frac{d(x,y)}{2},t\right),  \label{Z}
\end{align}
where $\gamma$ is the unit normal minimizing geodesic from $x$ to $y$ with $x=\gamma(-\frac{d}{2})$, $y=\gamma(\frac{d}{2})$.

We need to show $Z(x,y,t) \le 0$ for all $x \not= y$ in $\Omega$ and $t \ge 0$. Consider  $$Z_\eps\left(x,y,t\right):=Z(x,y,t)-\eps e^{Ct},$$
for some suitable large $C$ to be chosen (independent of $\epsilon$).
 Then our problem reduces to showing $Z_\eps(x,y,t)<0$ on $\hat{\Omega} \times [0,T]$  for any $\eps>0$  and $T \in (0, \infty)$,  where $\hat{\Omega} = \Omega\times\Omega - \{(x,x)\ | \ x\in\Omega\}$.

We first prove $Z_\epsilon <0$  near the boundary of $\hat{\Omega}$.  To show this we first establish the general fact that when the domain is convex,   $\Hess u$  is concave at the boundary and $\Hess \log u$ is  concave near the boundary under  suitable boundary conditions.  The proof is the same as the proof for the Euclidean domain in  \cite[Lemma 4.2]{andrewsclutterbuckgap}.

\begin{lemma}\label{Hessianest}
Let $\Omega$ be a uniformly convex bounded domain in a Riemannian manifold $M^n$,
and $u:  \overline{ \Omega}  \times \mathbb{R}_+ \to \mathbb{R}$ a $C^2$ function such that $u$ is positive on $\Omega$, $u(\cdot, t) = 0$ and  $\nabla u  \neq 0$ on $\dd\Omega$.   Given $T<\infty$,  there exists $r_1> 0$ such that $\nabla^2\log u|_{(x,t)} < 0$ whenever $d(x,\dd\Omega) < r_1$ and $t\in [0,T]$, and $N\in\mathbb{R}$ such that $\nabla^2\log u|_{(x,t)}(v,v) \leq N\|v\|^2$ for all $x \in \Omega$ and $t \in [0,T]$.
\end{lemma}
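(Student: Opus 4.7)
The plan is to start from the identity
\[
\nabla^2 \log u = \frac{\nabla^2 u}{u} - \frac{\nabla u \otimes \nabla u}{u^2}
\]
and to exploit the fact that, as $x\to\partial\Omega$, $u\to 0$ while $|\nabla u|$ stays bounded below. Using compactness of $\partial\Omega\times[0,T]$ and the hypothesis $\nabla u\neq 0$ on $\partial\Omega$, first fix $m>0$ with $|\nabla u|\ge m$ on $\partial\Omega\times[0,T]$, and $M:=\sup_{\overline\Omega\times[0,T]}\|\nabla^2 u\|<\infty$. By uniform continuity, for some $r_0>0$ we have $|\nabla u|\ge m/2$ on the tubular strip $U_{r_0}=\{d(x,\partial\Omega)<r_0\}$, uniformly in $t\in[0,T]$.

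The geometric input is the standard level-set identity $\II_{\{u(\cdot,t)=c\}}(w,w)=\nabla^2 u(w,w)/|\nabla u|$ for $w$ perpendicular to the inward unit normal $N=\nabla u/|\nabla u|$. Uniform convexity of $\Omega$ gives $\II_{\partial\Omega}\le -\kappa<0$ with respect to $N$, and by continuity (shrinking $r_0$ if needed) we obtain $\nabla^2 u(w,w)\le -\tfrac{\kappa}{2}|\nabla u||w|^2$ on $U_{r_0}\times[0,T]$ for all $w\perp\nabla u$. Decompose $v=aN+|w|\hat w$ with $\hat w\perp N$ a unit vector; then $\langle v,\nabla u\rangle=a|\nabla u|$ and
\[
u^2\,\nabla^2\log u(v,v)=u\,\nabla^2 u(v,v)-a^2|\nabla u|^2,
\]
which in the basis $\{N,\hat w\}$ is the quadratic form (in $(a,|w|)$) of the $2\times 2$ matrix
\[
A=\begin{pmatrix}
u\,\nabla^2 u(N,N)-|\nabla u|^2 & u\,\nabla^2 u(N,\hat w)\\
u\,\nabla^2 u(N,\hat w) & u\,\nabla^2 u(\hat w,\hat w)
\end{pmatrix}.
\]
The entry bounds give $\operatorname{tr}A\le 2uM-|\nabla u|^2$ and $\det A\ge (|\nabla u|^2-uM)\cdot\tfrac{u\kappa|\nabla u|}{2}-u^2M^2$; for $u$ small enough both have the correct sign, so $A$ is negative definite. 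Since $u\to 0$ uniformly on $U_{r_0}$ as $r_0\to 0$, choosing $r_1\le r_0$ sufficiently small forces $\nabla^2\log u<0$ throughout $\{d(x,\partial\Omega)<r_1\}\times[0,T]$.

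For the global upper bound $N$, split $\overline\Omega$ into the strip $U_{r_1}$ and its compact complement $\{d(x,\partial\Omega)\ge r_1\}$. On the strip the previous step gives $\nabla^2\log u<0$, in particular $\le 0$; on the complement, $u$ is bounded below by a positive constant by continuity and positivity, so $\log u\in C^2$ there with Hessian uniformly bounded on $[0,T]$, yielding some $N_0$; then $N:=\max(N_0,0)$ works everywhere. The principal obstacle is the matrix step: neither summand of $\nabla^2\log u$ individually controls the expression from above in mixed tangential-normal directions, so one must treat the full $2\times 2$ quadratic form $A$ at once and absorb the bounded cross-term $u\,\nabla^2 u(N,\hat w)$ into the strongly negative diagonal entries via the determinant-trace test. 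All uniformity in $t$ follows from compactness of $\overline\Omega\times[0,T]$ and continuity of $u$, $\nabla u$, $\nabla^2 u$.
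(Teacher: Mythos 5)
Your argument is correct and follows essentially the same route as the paper: uniform convexity plus the second fundamental form identity makes the tangential Hessian of $u$ strictly negative in a boundary strip, the term $-\frac{\nabla u\otimes\nabla u}{u^2}$ dominates in the normal direction because $u\to 0$ there while $\|\nabla u\|$ stays bounded below, and the interior bound $N$ comes from compactness. The only (cosmetic) difference is that you verify negative definiteness via the trace--determinant test on the $2\times 2$ block, whereas the paper absorbs the cross term with Young's inequality.
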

\begin{proof}  Let $\alpha = \inf\limits_{\dd\Omega\times [0,T]}\|\nabla u\| $.  By assumption $\alpha$ is positive.   Let $P$ be such that $\|\nabla^2 u(v,v)\||p \leq P\|v\|^2$ at every point $p\in\overline{\Omega}\times [0,T]$ and for all $v\in T_p\Omega$. If $x_0 \in \dd\Omega$, then $\nabla u|_{x_0} = -\|\nabla u\|\nu|_{x_0}$, where $\nu$ is the outward normal vector since $\dd\Omega = \{u=0\}$ and $\|\nabla u\| > 0$. Also, $\nabla^2u|_{x_0}(v,v) = -\II(v,v)\nabla_{\nu} u|_{x_0}$ for $\langle v,\nu\rangle =0$, where $\II$ is the second fundamental form of $\dd\Omega$ at $x_0$.  This follows since
\begin{align*}
\nabla^2 u (v,v) &= \langle \nabla_v \nabla u,v\rangle = -\langle \nabla_v( \|\nabla u\|\nu),v\rangle \\
 &=-\|\nabla u\|\langle\nabla_v\nu,v\rangle = -\II(v,v)\nabla_\nu u|_{x_0}.
\end{align*}
Uniform convexity implies that $\II(v,v) \geq \kappa \|v\|^2$ for some $\kappa > 0$.   The gradient direction $e= \frac{\nabla u}{\|\nabla u\|}$ is smooth near $x_0$ as is the projection $\pi^{\perp}: w \mapsto \langle w,e\rangle e$ and the orthogonal projection $\pi = \id - \pi^{\perp}$.  At $x_0$,   $\pi w$ is tangent to $\dd\Omega$,  we have
\begin{equation}
\nabla^2 u(\pi w,\pi w) \leq -\alpha \kappa\|\pi w\|^2.
\end{equation}
Therefore, there exists $r_0 > 0$ depending on $\alpha$, $\kappa$ and $P$ such that for $x \in B_{r_0}(x_0)\cap \Omega$ and $t\in [0,T]$, we have
\begin{align*}
\nabla^2u|_{x}(\pi w,\pi w)  & \leq -\frac{\alpha\kappa}{2}\|\pi w\|^2 \ \ \text{for any} \  w \in T_x\Omega; \\
\|\nabla u(x)\| & \ge  \tfrac 12 \|\nabla u(x_0)\| \ge \frac{\alpha}{2} ;  \\
0 < u(x) & \leq 2\|\nabla u(x_0)\|d(x,x_0).
\end{align*}
Then in such a neighborhood, we have for any $w$
\begin{align*}
\nabla^2u(w,w) &= \nabla^2u(\pi w + \pi^{\perp}w,\pi w + \pi^{\perp}w) \\
&= \nabla^2u(\pi w,\pi w) + 2\nabla^2u(\pi w, \pi^{\perp}w) + \nabla^2u(\pi^{\perp}w,\pi^{\perp}w) \\
&\leq -\f{\alpha\kappa}{2}\|\pi w\|^2 + 2P\|\pi w\|\|\pi^{\perp}w\| + P\|\pi^{\perp} w\|^2 \\
&\leq -\f{\alpha\kappa}{4}\|\pi w\|^2 + \left(P+\tfrac{4P^2}{\alpha\kappa}\right)\|\pi^{\perp}w\|^2.
\end{align*}
Since
\begin{equation*}
\langle \nabla u,  w\rangle^2 = \|\nabla u\|^2\|\pi^{\perp}w\|^2 \geq \f{\alpha  \|\nabla u(x_0)\| }{4}\|\pi^{\perp}w\|^2
\end{equation*}
and $u(x) \leq 2\|\nabla u(x_0)\|d(x,x_0)$, then
\begin{align*}
\nabla^2\log u|_x(w,w) &= \f{1}{u}\left(\nabla^2 u(w,w) -\tfrac{ \left( \nabla_w u\right)^2}{u}\right)\\
&\leq \f{1}{u}\left(-\f{\alpha\kappa}{4}\|\pi w\|^2 + \left(P+\f{4P^2}{\alpha\kappa} - \frac{\alpha}{8d(x,x_0)}\right)\|\pi^{\perp}w\|^2\right)\\
&<0
\end{align*}
provided $d(x,x_0) < r_1 = \min\{r_0, \frac{\alpha^2\kappa}{8(P\alpha\kappa+4P^2)}\}$.

Since $\{x \in \Omega: d(x, \partial \Omega) \ge r_1 \}$ is compact, letting $N = \max \{0, \sup \{ \nabla^2\log u (x,t) (w,w): |w| =1, t \in [0,T], d(x,  \partial \Omega) \ge r_1\}\}$ finishes the proof of the lemma.
\end{proof}

Now we can show $Z$ is almost nonpositive near the boundary of  $\hat{\Omega} = \Omega\times\Omega - \{(x,x)\ | \ x\in\Omega\}$.
\begin{lemma} \label{boundary-cover-lemma}
Let $\Omega$ and $u$ be as in Lemma \ref{Hessianest} and let $\psi$ be continuous on $[0,D/2]\times\mathbb{R}_+$ and Lipschitz in the first argument, with $\psi(0,t) = 0$ for each $t$ with $D = $ diam $\Omega$. Then for any $T < \infty$ and $\beta > 0$, there exists an open set $U_{\beta,T} \subset M\times M$ containing $\dd\hat{\Omega}$ such that the function defined in (\ref{Z}) satisfies $Z(x,y,t) < \beta$ for all $t\in [0,T]$ and $(x,y) \in U_{\beta,T}\cap \hat{\Omega}$.
\end{lemma}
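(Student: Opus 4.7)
The plan is to set $U_{\beta,T} := U_1 \cup U_2$, where $U_1$ is a tubular neighborhood of the diagonal of $\overline{\Omega}$ in $M\times M$ and $U_2$ covers the remaining compact piece $K := \partial\hat\Omega \setminus U_1$, consisting of pairs $(x_0,y_0)$ with $d(x_0,y_0) \ge \rho_1 > 0$ and at least one endpoint in $\partial\Omega$. On $U_1$ the bound will come from the global Hessian control in Lemma~\ref{Hessianest}, while on $K$ we will exploit the blow-up of $\nabla\log u$ at $\partial\Omega$.

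For the diagonal part, Lemma~\ref{Hessianest} supplies $N \ge 0$ with $\nabla^2\log u|_{(x,t)}(w,w) \le N\|w\|^2$ throughout $\Omega \times [0,T]$. For $(x,y) \in \hat\Omega$ and $\gamma$ the unit-speed minimizing geodesic between them, parallelism of $\gamma'$ along $\gamma$ yields the fundamental identity
\begin{equation*}
\bigl\langle \nabla\log u(y,t),\gamma'(\tfrac{d}{2})\bigr\rangle - \bigl\langle \nabla\log u(x,t),\gamma'(-\tfrac{d}{2})\bigr\rangle = \int_{-d/2}^{d/2} \nabla^2\log u\bigl(\gamma'(s),\gamma'(s)\bigr)\,ds \le N\,d(x,y).
\end{equation*}
Uniform continuity of $\psi$ on the compact rectangle $[0,D/2]\times[0,T]$, together with $\psi(0,t) = 0$, produces $\rho_1 > 0$ such that $2|\psi(s,t)| < \beta/2$ whenever $s < \rho_1/2$ and $t \in [0,T]$; shrinking $\rho_1$ further if needed so that $N\rho_1 \le \beta/2$, set $U_1 := \{(x,y)\in M\times M : d(x,y) < \rho_1\}$. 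This is open, contains $\{(x,x) : x \in \overline\Omega\}$, and on $U_1 \cap \hat\Omega$ one has $Z(x,y,t) \le Nd(x,y) + 2|\psi(d(x,y)/2,t)| < \beta$ for all $t \in [0,T]$.

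For the boundary part, $K$ is a closed subset of the compact space $\overline\Omega\times\overline\Omega$, hence compact; at each $(x_0,y_0) \in K$ assume (say) $x_0 \in \partial\Omega$. By the assumptions of Lemma~\ref{Hessianest}, $\alpha := \inf_{\partial\Omega\times[0,T]}\|\nabla u\| > 0$ and $\nabla u(x_0,t)$ is antiparallel to the outward unit normal $\nu(x_0)$. Strict convexity of $\Omega$ together with $d(x_0,y_0) \ge \rho_1 > 0$ forces the initial velocity $w_0 = \gamma'_{x_0,y_0}(-d(x_0,y_0)/2)$ to satisfy $-\langle w_0, \nu(x_0)\rangle > 0$; this quantity is continuous in $(x_0,y_0)$ on the compact set $\{(x_0,y_0) \in K : x_0 \in \partial\Omega\}$, hence bounded below by some $\eta > 0$. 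Therefore $\langle \nabla u(x_0,t), w_0\rangle \ge \alpha\eta > 0$ uniformly in $t \in [0,T]$, and since $u(x,t) \to 0^+$ as $x \to x_0$, we obtain $\langle \nabla\log u(x,t),\gamma'(-d/2)\rangle \to +\infty$ uniformly in $t$. (If $y_0 \in \partial\Omega$ the symmetric analysis shows $\langle \nabla\log u(y,t), \gamma'(d/2)\rangle \to -\infty$, and otherwise it remains bounded; the $\psi$ term is bounded on a compact rectangle.) Hence $Z(x,y,t) \to -\infty$ uniformly in $t \in [0,T]$ as $(x,y) \to (x_0,y_0)$, providing an open neighborhood of $(x_0,y_0)$ on which $Z < \beta$; compactness of $K$ lets us extract a finite subcover whose union is the desired $U_2$. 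The main technical subtlety throughout is ensuring uniformity in $t$ of the boundary blow-up, which is controlled by the uniform positivity of $\|\nabla u\|$ on $\partial\Omega\times[0,T]$ and the uniform transversality angle coming from strict convexity and compactness.
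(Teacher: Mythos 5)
Your proof is correct and follows essentially the same strategy as the paper: near the diagonal you integrate the uniform Hessian bound from Lemma~\ref{Hessianest} along the connecting geodesic and use the smallness of $\psi$ near $s=0$, and near $\partial\Omega$ you use $\nabla u = -\|\nabla u\|\nu$ together with strict convexity (transversality of $\gamma'$ to $\nu$) and $u(x)\lesssim d(x,\partial\Omega)$ to force $Z\to-\infty$. The only difference is organizational — you build $U_{\beta,T}$ as a tubular neighborhood of the diagonal plus a finite subcover of the remaining compact boundary piece, extracting uniform constants by compactness, whereas the paper simply takes a union of balls centered at each point of $\partial\hat\Omega$ with pointwise radii, which already yields an open set without any uniformity.
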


\begin{proof}
Since $\psi$ is Lipschitz in the first argument, there exists $L$ such that
\begin{equation*}
|\psi(s,t)| \leq Ls
\end{equation*}
for all $s\in [0,D/2]$ and $t\in [0,T]$.  We construct $U_{\beta,T}$ as a union of open balls $\bigcup\limits_{(x_0,y_0)\in \dd\hat\Omega} B_{r}(x_0,y_0)$, where $r=r(x_0,y_0)>0$. In order to find $r=r(x_0,y_0)>0$ such that $Z(x,y,t)<\beta$ for any $(x,y)\in B_r(x_0,y_0)\cap \hat{\omega}$, we consider two cases.

\textbf{Case 1:}  $x_0 = y_0$.

Observe that the difference of the gradient and Hessian are related as follows.
\begin{eqnarray}
\left\langle\nabla \log u (y,t),\gamma'\left(\tfrac{d}{2}\right)\right\rangle
-\left\langle\nabla \log u (x,t),\gamma'\left(-\tfrac{d}{2}\right)\right\rangle & = & \int_{-\frac d2}^{\frac d2} \frac{d}{ds} \left\langle\nabla \log u (\gamma(s),t),\gamma'\left(s\right)\right\rangle ds \nonumber  \\
& = & \int_{-\frac d2}^{\frac d2}  \Hess \log u (\gamma'\left(s\right), \gamma'\left(s\right) ) ds.
\end{eqnarray}

 Since $\psi(0,t) =0$, by Lemma~\ref{Hessianest} we have
\begin{align*}
Z(x,y,t) &=\int_{-\f{d}{2}}^\f{d}{2} \Hess(\log  u(\gamma(s),t))(\gamma',\gamma')ds - 2\psi\left(\frac{d(x,y)}{2},t\right) \\
&\leq (N+L)d.
\end{align*}
Hence $Z < \beta$ provided $(x,y) \in B_r(x_0,x_0)$ with $r< \frac{\beta}{2(N+L)}$.

\textbf{Case 2:} $x_0 \not= y_0$. In this case at least one of $x_0, y_0 \in \partial \Omega$. Say $x_0 \in \dd\Omega$ and $y_0 \in \Omega$ or $y_0 \in \dd \Omega$.

If $y_0 \in \Omega$, then $u(y_0) > 0$ and there is some $A > 0$ such that $\|\nabla \log  u(y)\| \leq A$ for $d(y,y_0) < r_2$.  Let $\alpha_0 = \|\nabla  u(x_0)\| > 0$ and $\gamma_0: [-\tfrac{d_0}{2}, \tfrac{d_0}{2}] \rightarrow \Omega$ be a normal minimal geodesic from $x_0$ to $y_0$. Then $\eta := \langle -\gamma_0'\left(-\f{d_0}{2}\right), \nu  (x_0) \rangle > 0$ by convexity.  Since  $\nabla  u(x_0) = -\alpha_0 \nu(x_0)$, we have $\langle \nabla  u(x_0),  \gamma_0'\left(-\f{d_0}{2}\right) \rangle = \eta\alpha_0$. For $x,y \in \Omega$ near $(x_0,y_0)$, let $\gamma: [-\tfrac{d}{2}, \tfrac{d}{2}] \rightarrow \Omega$ be a normal minimal geodesic from $x$ to $y$. Since  $\langle \nabla u(x), \gamma'\rangle$ is smooth in $x$ and $y$,   $\langle \nabla  u(x), \gamma'\rangle  \geq \frac{1}{2}\eta\alpha_0$ and $0 <  u(x) =  u(x) - u(x_0) \leq 2\alpha_0d(x,x_0)$ for $x,y\in\Omega$ with $\max\{d(y,y_0),d(x,x_0)\}<r_3$ and $0 < r_3 \leq r_2$.  Then
\begin{align*}
Z(x,y,t) &= \left\langle\nabla\log  u(y,t),\gamma'\left(\tfrac{d}{2}\right)\right\rangle - \left\langle\nabla\log u(x,t),\gamma'\left(-\tfrac{d}{2}\right)\right\rangle - 2\psi\left(\frac{d(x,y)}{2},t\right)\\
&\hspace{0.2 in} \leq A - \frac{1}{ u(x,t)}\langle\nabla  u(x,t),\gamma'\rangle + L\, d(x,y) \\
&\hspace{0.2 in} \leq A - \frac{\eta}{4d(x,x_0)} + L\, d(x,y).
\end{align*}
Therefore $Z(x,y,t)<0$ if $d(y,y_0)<r_3$ and $d(x,x_0)<\min\{r_3,\frac{\eta}{4(A+LD)}\}$.

If $y_0 \in \dd \Omega$,
then $y$ can also be handled in the same way as $x$ above.
\end{proof}

Now we continue with the proof of Theorem~\ref{log-con-preserve}.

Since $u$ satisfies (\ref{Lap-u}) and (\ref{Dirichlprobforphi}), by the Hopf boundary point lemma, $\langle \nabla u(x,t), \nu \rangle <0$ for every $x \in \dd \Omega$ and every $t \ge 0$.   Namely $u$ satisfies the conditions in Lemma \ref{boundary-cover-lemma}.    Fix $T < \infty$ and $\epsilon >0$.  By assumption $Z_\epsilon (x,y,0) <  0$ on $\hat{\Omega}$.  By Lemma~ \ref{boundary-cover-lemma}, $Z_\epsilon (x,y,t)  \le -\tfrac 12 \epsilon$ on $(U_{\epsilon/2,T}\cap \hat{\Omega}) \times [0,T]$.
Hence  if  $Z_\epsilon (x,y,t) < 0$ does not hold on $\hat{\Omega} \times [0, T]$,  then there exists a first time $t_0>0$, and point $(x_0,y_0) \in  \hat{\Omega} \setminus U_{\epsilon/2,T}$,  in particular  $x_0 , y_0$ are in the interior of $\Omega$ and $x_0 \not= y_0$,  such that $Z_\epsilon<0$ on $\hat{\Omega} \times [0, t_0)$,  and at $(x_0,y_0,t_0)$,
\begin{equation}
Z_\epsilon =0, \ \ \frac{\dd}{\dd t}Z_\eps  \geq 0, \ \
 \nabla_{v\oplus w} Z_\eps  =0,  \ \ \nabla^2_{v\oplus w, v\oplus w}  Z_\eps  \le 0,
\end{equation}
for any $v\in T_{x_0} \Omega, \ w \in T_{y_0} \Omega$.
Let $\gamma(s)$ be a unit normal minimizing geodesic with $\gamma(-\tfrac{d_0}{2}) =x_0$ and $\gamma(\tfrac{d_0}{2}) = y_0$, where $d_0 = d(x_0,y_0)$. Choose a local orthonormal frame $\{e_i\}$ at $x_0$ such that $e_n = \gamma'(-\tfrac{d_0}{2})$ and parallel translate them along $\gamma$.
Let $E_i = e_i \oplus e_i \in T_{(x_0,y_0)}\Omega \times \Omega$ for $1 \le i \le n-1$, and $E_n = e_n \oplus (-e_n)$.

For convenience, denote $\omega=\log  u$ and $\f{\dd}{\dd s}$ by $'$.

By (\ref{Dirichlprobforphi}),   $(\dd_t -\Delta)  \omega =\|\nabla\omega\|^2$. Hence
\begin{align*}
\dd_t  \nabla \omega =
\nabla\dd_t \omega= \nabla\Delta \omega +\nabla\|\nabla\omega\|^2.
\end{align*}
Taking the time derivative of $Z_\eps$ at $(x_0, y_0, t_0)$:
\begin{align}
\begin{split}\label{timederivw}
0
&\leq \frac{\dd}{\dd t}Z_\eps\big|_{(x_0, y_0, t_0)}  =\langle \dd_t \nabla \omega(y_0, t_0),\gamma'\rangle -\langle \dd_t  \nabla\omega(x_0,t_0),\gamma'\rangle-2\frac{\dd\psi}{\dd t} - C\eps e^{Ct_0}\\
&=   \langle \nabla  \Delta \omega(y_0, t_0),\gamma'\rangle  + \langle \nabla \| \nabla \omega(y_0, t_0) \|^2,\gamma'\rangle  -2\frac{\dd\psi}{\dd t} - C\eps e^{Ct_0}  \\
&\hspace{0.2 in}  -   \langle \nabla  \Delta \omega(x_0, t_0),\gamma'\rangle  -\langle \nabla \| \nabla \omega(x_0,t_0)\|^2,\gamma'\rangle.
\end{split}
\end{align}

Now take the spatial derivative of  $Z_\eps$ at $(x_0, y_0, t_0)$.  We suppress $t_0$ at various places below when it is clear. Associated to a vector  $v\oplus w \in T_{x_0} \Omega \oplus T_{y_0} \Omega$, we construct a variation $\eta(r,s)$ as follows. Let $\sigma_1(r)$ be the geodesic with $\sigma_1(0) =x_0, \tfrac{\partial}{\partial r} \sigma_1(0)=v$,  $\sigma_2(r)$ be the geodesic with $\sigma_2(0) =y_0, \tfrac{\partial}{\partial r} \sigma_2(0)=w$, and $\eta(r, s)$, $s \in [-\tfrac{d_0}{2},\tfrac{d_0}{2}]$, be the minimal geodesic connecting $\sigma_1(r)$ and $\sigma_2(r)$, with $\eta(0,s) = \gamma(s)$. Namely $\eta(r,s) = \exp_{\sigma_1(r)} s V(r)$ for some $V(r)$. Since we are in a strictly convex domain, every two points are connected by a unique minimal geodesic, the variation $\eta(r,s)$ is smooth. Denote the variation field $\tfrac{\partial}{\partial r} \eta(r,s)$ by $J(r,s)$. Then $J(r,s)$  is the  Jacobi field along $s$ direction satisfying $J(r, -\tfrac{d_0}{2})  =v,  \ J(r,\tfrac{d_0}{2})  = w$. Denote $J(s) = J(0,s)$.  Note that with this parametrization, in general, for fixed $r$, $\eta(r,s)$ is not unit speed when $s \not= 0$.

We will need the first and second covariant derivative of $T(r,s) = \frac{\eta'}{\|\eta'\|}$, the unit vector of $\tfrac{\partial}{\partial s} \eta (r,s)$, in $r$ at $r=0$.  While one can construct geodesic variation easily on general manifolds with initial data of Jacobi field $J(0), J'(0)$, it is not clear how to write out the geodesic variation with the data of the Jacobi field at both end points.  For $\mathbb M^n_K$, one can write $\eta(r,s)$ explicitly though already subtle when $K \not= 0$, see Appendix~\ref{explicit-variation-sphere} for a construction.  In general, we can find the first derivative as follows, $$\tfrac{\partial \eta}{\partial r}\tfrac{\partial \eta}{\partial s}\big|_{r=0} = \tfrac{\partial \eta}{\partial s}\tfrac{\partial \eta}{\partial r}\big|_{r=0}  = \tfrac{\partial \eta}{\partial s} J(r,s)\big|_{r=0} = J'(s).$$  As $r \rightarrow 0$, write
\begin{equation} \tfrac{\partial}{\partial s} \eta (r,s) = \gamma'(s) + r J'(s) + \frac 12 r^2 \nabla_r\nabla_r \frac{\dd\eta}{\dd s}\big|_{r=0} + O(r^3).
\end{equation}
Then \[
\|\eta'\|(r,s) =\left[1+2 r \langle \gamma'(s), J'(s) \rangle +r^2\left( \|J'(s)\|^2 +  \langle \nabla_r\nabla_r \frac{\dd\eta}{\dd s}\big|_{r=0}, e_n \rangle \right)  + O(r^3)\right]^{1/2}.
\]  And
\[  \tfrac{\partial}{\partial r}(\|\eta'\|^{-1}) = -\|\eta'\|^{-3} \left[\langle \gamma'(s), J'(s) \rangle +r \left( \|J'(s)\|^2 + \langle \nabla_r\nabla_r \frac{\dd\eta}{\dd s}\big|_{r=0}, e_n \rangle \right) +O(r^2)\right],
\]
\begin{eqnarray*} \tfrac{\partial}{\partial r}(\|\eta'\|^{-1})\big|_{r=0} & = & -\langle \gamma'(s), J'(s) \rangle,  \\
 \tfrac{\partial^2}{\partial r^2}(\|\eta'\|^{-1})\big|_{r=0} & = &  3 \langle \gamma'(s), J'(s) \rangle^2 - \|J'(s)\|^2 - \langle \nabla_r\nabla_r \frac{\dd\eta}{\dd s}\big|_{r=0}, e_n \rangle. \end{eqnarray*}
Hence
\begin{eqnarray*}
\nabla_r T(r,s)\big|_{r=0} &=& \tfrac{\partial}{\partial r}(\|\eta'\|^{-1})|_{r=0} T(0,s) + \nabla_r\tfrac{\dd\eta}{\dd s}\big|_{r=0} \nonumber \\
&  =& - \langle \gamma'(s), J'(s) \rangle e_n + J'(s),
\end{eqnarray*}
\begin{eqnarray*}
\nabla_r\nabla_r T|_{r=0} &=&\left(3 \langle \gamma'(s), J'(s) \rangle^2 - \|J'(s)\|^2 - \langle \nabla_r\nabla_r \frac{\dd\eta}{\dd s}\big|_{r=0}, e_n \rangle \right) e_n \nonumber \\
& &  -2 \langle \gamma'(s), J'(s) \rangle J'(s) + \nabla_r\nabla_r \frac{\dd\eta}{\dd s}\big|_{r=0}.
\end{eqnarray*}

For space with constant sectional curvature $\mathbb M^n_K$, $J(s)$ is a multiple of some parallel vector field.
Hence for variation in the normal direction,  $\langle \gamma'(s), J'(s) \rangle =0$ and
\begin{equation}
\nabla_r T(r,s)\big|_{r=0} = J'(s). \label{first-derivative-gamma}
\end{equation}
If in addition  $\nabla_r\nabla_r \frac{\dd\eta}{\dd s}\big|_{r=0}$ has only $e_n$ component, then
\begin{equation}
  \nabla_r\nabla_r T|_{r=0} = - \|J'(s)\|^2 e_n. \label{second-derivative-T}
\end{equation}

 For the variation in the direction $e_i \oplus e_i$, we construct the variation explicitly and verify that $\nabla_r\nabla_r \frac{\dd\eta}{\dd s}\big|_{r=0}$ has only the $e_n$ component, see (\ref{second-derivative-in-r}).
 \newline

Now we compute the derivatives.

(i) For the first derivative in normal directions $0\oplus e_i$, $ 1\leq i\leq n-1$, the Jacobi fields are
$
Q_i(s)=\frac{\sn_K(\frac{d_0}{2}+s)}{\sn_K(d_0)}e_i(s).
$
Denote  the variation by $\gamma_i(r,s)$ and $T_i(r,s) = \frac{\gamma_i'}{\|\gamma_i'\|}$.
We obtain
\begin{align}
\begin{split}\label{norfirstvar1}
0
&=\nabla_{0\oplus e_i}Z_\eps|_{(x_0,y_0, t_0)} =\frac{\dd}{\dd r}Z_\eps\left(x_0,\gamma_i(r,\tfrac{d_0}{2}), t_0\right)\Big|_{r=0}\\
&=\left\langle\nabla_r\nabla\omega(\gamma_i(r,\tfrac{d_0}{2})), T_i(r,s)\big|_{s=\f{d_0}{2}}\right\rangle\Big|_{r=0}
  +\left\langle\nabla\omega(\gamma_i(r,\tfrac{d_0}{2})),\nabla_rT_i(r,s)\big|_{s=\frac{d_0}{2}}\right\rangle\Big|_{r=0}\\
&\ \ \ -\left\langle\nabla\omega(\gamma_i(r,-\tfrac{d_0}{2})),\nabla_r T_i(r,s)\big|_{s=-\frac{d_0}{2}}\right\rangle\Big|_{r=0}
       -\psi'\frac{\dd}{\dd r}d\left(x_0,\gamma_i(r,\tfrac{d_0}{2})\right)\Big|_{r=0}\\
&=\langle\nabla_{e_i}\nabla\omega(y_0), e_n\rangle
+\frac{\cs_K(d_0)}{\sn_K(d_0)}\langle\nabla\omega(y_0),e_i\rangle
-\frac{1}{\sn_K(d_0)}\langle\nabla\omega(x_0),e_i\rangle.
\end{split}
\end{align}
Here we applied (\ref{first-derivative-gamma}) with $Q_i'(s)
=\frac{\cs_K(s + \tfrac{d_0}{2})}{\sn_K(d_0)}e_i$, and the first variation of the distance is zero to get the last equality.

Similarly, for the direction $e_i\oplus 0$,  choose $Q_i(s)=\frac{\sn_K(s-\frac{d_0}{2})}{\sn_K(d_0)}e_i(s). $
we obtain
\begin{equation}\label{norfirstvar2}
\begin{aligned}
0
&=\nabla_{e_i\oplus 0}Z_\eps|_{(x_0,y_0)}  =\frac{\dd}{\dd r}Z_\eps\left(\gamma_i(r,-\tfrac{d_0}{2}),y_0\right)\Big|_{r=0}\\
&=\langle\nabla_{e_i}\nabla\omega(x_0), e_n\rangle
 +\frac{1}{\sn_K(d_0)}\langle\nabla\omega(y_0),e_i\rangle-\frac{\cs_K(d_0)}{\sn_K(d_0)}\langle\nabla\omega(x_0),e_i\rangle.
\end{aligned}
\end{equation}

(ii) Taking the variation in the direction tangent to the geodesic:
\begin{equation}
\begin{split}\label{tanfirstvar1}
0
&=\nabla_{e_n\oplus 0}Z_\eps|_{(x_0,y_0)}=\frac{\dd}{\dd r}Z_\eps\left(\gamma(-\tfrac{d_0}{2}+r),y_0\right)\Big|_{r=0} \\
& =-\left\langle\nabla_{e_n}\nabla\omega(x_0),e_n\right\rangle + \psi'\left(\tfrac{d_0}{2}\right).
\end{split}
\end{equation}
Similarly,
\begin{align}
\begin{split}\label{tanfirstvar2}
0
&=\nabla_{ 0\ \oplus (-e_n)}Z_\eps\big|_{(x_0,y_0)} =\frac{\dd}{\dd r}Z_\eps\left(x_0,\gamma'(\tfrac{d_0}{2}-r)\right)\Big|_{r=0}\\
&= -\left\langle\nabla_{e_n}\nabla\omega(y_0),e_n\right\rangle + \psi'\left(\tfrac{d_0}{2}\right) .
\end{split}
\end{align}

For the second derivative of normal  spatial in the directions $e_i \oplus e_i, \ i = 1, \cdots, n-1$,
the Jacobi fields are $J_i(s) =\frac{\cs_K(s)}{\cs_K(\frac{d_0}{2})}e_i(s).$ Denote $\eta_i(r,s)$ its variation and  $T_i(r,s) = \frac{\eta_i'}{\|\eta_i'\|}$.

We have the following formula for the first derivative.
\begin{align*}
&\f{\dd}{\dd r} Z_\epsilon \left(\eta_i(r,\tfrac{d_0}{2}),\eta_i(r,-\tfrac{d_0}{2})\right) \\
&= \left\langle  \nabla_r \nabla \omega(\eta_i(r,\tfrac{d_0}{2})),T_i(r,s)|_{s=\f{d_0}{2}}\right\rangle
+ \left\langle \nabla \omega(\eta_i(r,\tfrac{d_0}{2})), \nabla_r T_i(r,s)|_{s=\f{d_0}{2}}\right\rangle\\
&\hspace{0.2 in}
- \left\langle \nabla_r \nabla \omega(\eta_i(r,-\tfrac{d_0}{2})), T_i(r,s)|_{s=-\f{d_0}{2}}\right\rangle
- \left\langle \nabla \omega(\eta_i(s,-\tfrac{d_0}{2})), \nabla_r T_i(r,s)|_{s=-\f{d_0}{2}}\right\rangle \\
&\hspace{0.2in}
-\psi'\left(\tfrac{d}{2}\right)\frac{\dd}{\dd r} d\left(\eta_i(r,\tfrac{d_0}{2}),\eta_i(r,-\tfrac{d_0}{2})\right).
\end{align*}
Then
\begin{align*}
&\f{\dd^2}{\dd r^2} Z_\epsilon \left(\eta_i(r,\tfrac{d_0}{2}),\eta_i(r,-\tfrac{d_0}{2})\right)\Big|_{r=0} \\
&= \left\langle \nabla_r \nabla_r \nabla \omega(\eta_i(r,\tfrac{d_0}{2})) |_{r=0}, \  T_i(0,s)|_{s=\f{d_0}{2}}\right\rangle
+ 2\left\langle \nabla_r \nabla \omega(\eta_i(r,\tfrac{d_0}{2})), \nabla_r T_i(r,s)|_{s=\f{d_0}{2}}\right\rangle \Big|_{r=0}\\
&\hspace{0.2 in}
- \left\langle \nabla_r\nabla_r \nabla \omega(\eta_i(r,-\tfrac{d_0}{2}))|_{r=0}, \ T_i(0,s)|_{s=-\f{d_0}{2}}\right\rangle
- 2\left\langle \nabla_r\nabla \omega(\eta_i(r,-\tfrac{d_0}{2})), \nabla_r T_i(r,s)|_{s=-\f{d_0}{2}}\right\rangle \Big|_{r=0} \\
&\hspace{0.2 in}
+ \left\langle \nabla \omega(\eta_i(0,\tfrac{d_0}{2})), \nabla_r\nabla_r T_i (r,s)|_{s=\f{d_0}{2}, r=0}\right\rangle
- \left\langle \nabla \omega(\eta_i(0,-\tfrac{d_0}{2})), \nabla_r\nabla_r T_i(r,s)|_{s=-\f{d}{2}, r=0}\right\rangle \\
&\hspace{0.2in}
-\frac{1}{2}\psi''\left(\tfrac{d_0}{2}\right)\left(\frac{\dd}{\dd r}d(\eta_i(r,\tfrac{d_0}{2}),\eta_i(r,-\tfrac{d_0}{2}))\right)^2 \Big|_{r=0}
-\psi'\left(\tfrac{d_0}{2}\right)\frac{\dd^2}{\dd r^2} d\left(\eta_i(r,\tfrac{d_0}{2}),\eta_i(r,-\tfrac{d_0}{2})\right) \Big|_{r=0}.
\end{align*}
Obviously $\langle \gamma'(s), J_i'(s) \rangle =0$. From Appendix~\ref{explicit-variation-sphere},  $\nabla_r\nabla_r \frac{\dd\eta}{\dd s}\big|_{r=0}$ has only $e_n$ component. So we can use  (\ref{first-derivative-gamma}) and (\ref{second-derivative-T}) to get
\begin{equation*}\label{first-derivative-eta1}
\nabla_r T_i\big|_{r=0}
 =J'_i(s)
 =-K\frac{\sn_K(s)}{\cs_K(\frac{d_0}{2})}e_i,
\end{equation*}
and
\begin{align*}\label{second-derivative-eta1}
\nabla_r\nabla_r T_i\big|_{r=0} &=-\|J'_i(s)\|^2 e_n = -\frac{K^2\sn_K^2(s)}{\cs_K^2(\tfrac{d_0}{2})}e_n. 
\end{align*}

Also the first and second variation of length \cite[Chapter 1]{Cheeger-Ebin} are
\begin{align*}
\frac{\dd}{\dd r}d\left(\eta_i(r,\tfrac{d_0}{2}),\eta_i(r,-\tfrac{d_0}{2})\right)\big|_{r=0} &= \frac{\dd}{\dd r}\int_{-d_0/2}^{d_0/2} \langle \frac{\dd\eta_i}{\dd s}, \frac{\dd\eta_i}{\dd s}\rangle ^{\frac 12}ds = \langle J_i, e_n \rangle |_{-d_0/2}^{d_0/2} - \int_{-d_0/2}^{d_0/2} \langle J_i, \nabla_{e_n} e_n \rangle  = 0.
\end{align*}

\begin{equation*}\label{second derivative of length of geodesic}
\begin{aligned}
\frac{\dd^2}{\dd r^2}d\left(\eta_i(r,\tfrac{d_0}{2}),\eta_i(r,-\tfrac{d_0}{2})\right)\Big|_{r=0}
&=\int^{\frac{d_0}{2}}_{-\frac{d_0}{2}}\left[ \langle J_i', J_i' \rangle  - \langle R(e_n, J_i)J_i,e_n\rangle \right]ds + \langle e_n, \nabla_{r}\frac{\dd\eta_i}{\dd r} \rangle |_{-d_0/2}^{d_0/2} \\
&=\frac{1}{\cs_K^2(\frac{d_0}{2})}\int^{\frac{d_0}{2}}_{-\frac{d_0}{2}}\left[ (\cs_K'(s))^2 - K \cs_K^2(s)\right]ds\\
& = \frac{2}{\cs_K^2(\frac{d_0}{2})}\int^{\frac{d_0}{2}}_{0}\left[ -K \sn_K(s) \cs_K'(s) - K\sn'_K(s)  \cs_K(s)\right]ds\\
& = -  \frac{2K}{\cs_K^2(\frac{d_0}{2})}\int^{\frac{d_0}{2}}_{0} \left( \sn_K \cs_K\right)' ds  =-2\tn_K(\tfrac{d_0}{2}).
\end{aligned}
\end{equation*}
Hence
\begin{align}
\begin{split}\label{orthogonalvar}
0
&\geq \nabla^2_{E_i,E_i}Z_\epsilon |_{(x_0,y_0)} = \f{\dd}{\dd r^2}Z_\epsilon \left(\eta_i(r,\tfrac{d_0}{2}),\eta_i(r,-\tfrac{d_0}{2})\right)\Big|_{r=0} \\
&= \left\langle \nabla_{e_i}\nabla_{e_i}\nabla \omega(y_0),e_n\right\rangle
-\left\langle \nabla_{e_i}\nabla_{e_i}\nabla\omega(x_0),e_n\right\rangle \\
&\hspace{0.2 in}-2\tn_K\left( \tfrac {d_0}{2}\right) \left[\langle\nabla_{e_i}\nabla\omega(y_0),e_i\rangle
 +\langle\nabla_{e_i}\nabla\omega(x_0), e_i\rangle\right] \\
&\hspace{0.2 in}- \tn_K^2(\tfrac{d_0}{2})\left[\langle\nabla\omega(y_0),e_n\rangle -\langle\nabla\omega(x_0),e_n\rangle\right]
+ 2\tn_K\left(\tfrac{d_0}{2}\right)\psi'\left(\tfrac{d_0}{2}\right).
\end{split}
\end{align}

Next the second variation in the tangential direction is
\begin{align}
\begin{split}\label{tangentialvar}
0
&\geq\nabla^2_{E_n,E_n}Z|_{(x_0,y_0)} = \f{\dd^2}{\dd r^2}Z_\epsilon \left(\gamma(-\tfrac{d_0}{2}+r),\gamma(\tfrac{d_0}{2}-r)\right) \Big|_{r=0}  \\
&=\left\langle \nabla_{e_n}\nabla_{e_n}\nabla\omega(y_0),e_n\right\rangle
  -\left\langle \nabla_{e_n}\nabla_{e_n}\nabla\omega(x_0),e_n \right\rangle-2\psi''\left(\tfrac{d_0}{2}\right).
\end{split}
\end{align}

Adding up  \eqref{orthogonalvar} from $i =1, \cdots, n-1$  and  \eqref{tangentialvar} gives
\begin{align}
\begin{split}\label{second-spatial-derivative}
0
&\geq \langle\Delta\nabla \omega(y_0),e_n \rangle-\langle \Delta\nabla\omega(x_0),e_n\rangle \\
&\hspace{0.2in} -2\tn_K\left(\tfrac{d_0}{2}\right)\sum_{i=1}^{n-1}\left[ \langle \nabla_{e_i}\nabla\omega(y_0),e_i\rangle
 + \langle \nabla_{e_i}\nabla\omega(x_0),e_i\rangle\right] \\
&\hspace{0.2 in} -(n-1)\tn_K^2(\tfrac{d_0}{2})\left[ (\nabla\omega(y_0),e_n) - (\nabla \omega(x_0),e_n)\right]
-2\psi''\left(\tfrac{d_0}{2}\right) +2(n-1)\tn_K(\tfrac{d_0}{2})\psi'\left(\tfrac{d_0}{2}\right).
\end{split}
\end{align}

Combining the inequality from the second derivative of spatial directions  (\ref{second-spatial-derivative})  with the inequality from time derivative  (\ref{timederivw}), and use the  Bochner-Weitzenb\"ock  formula for vector field (see e.g. \cite[Page 18]{Cheeger})
\begin{equation}
 \Delta\nabla\omega -\Ric(\nabla \omega,\cdot) =\nabla\Delta \omega,  \label{Bochner}
 \end{equation}
 we have, using $\Ric = (n-1)Kg$,
\begin{align}
\begin{split}  \label{key-inequality}
&  \langle \nabla \| \nabla \omega(y_0,t_0)\|^2  , e_n \rangle  -\langle \nabla \| \nabla \omega(x_0,t_0)\|^2, e_n \rangle    -2\frac{\dd\psi}{\dd t} - C\eps e^{Ct_0}     \\
&\geq -2\tn_K\left(\tfrac{d_0}{2}\right)\sum_{i=1}^{n-1}\left[ \langle \nabla_{e_i}\nabla\omega(y_0),e_i\rangle
+\langle \nabla_{e_i}\nabla\omega(x_0),e_i\rangle\right]\\
&\hspace{0.2 in} -2\psi''\left(\tfrac{d_0}{2}\right) +2(n-1)\tn_K\left(\tfrac{d_0}{2}\right)\psi'\left(\tfrac{d_0}{2}\right)  \\
&\hspace{0.2 in}+(n-1)(K-\tn_K^2(\tfrac{d_0}{2}))(\langle \nabla \omega(y_0),e_n\rangle - \langle \nabla\omega(x_0),e_n\rangle )\\
& =  -2\tn_K\left(\tfrac{d_0}{2}\right) \left[ \Delta \omega(y_0) + \Delta \omega(x_0) \right] +  2\tn_K\left(\tfrac{d_0}{2}\right)   \left[  \langle \nabla_{e_n}\nabla\omega(y_0),e_n\rangle
+  \langle \nabla_{e_n}\nabla\omega(x_0),e_n\rangle\right]  \\
&  \hspace{.2in} -2\psi''\left(\tfrac{d_0}{2}\right) +2(n-1)\tn_K\left(\tfrac{d_0}{2}\right)\psi'\left(\tfrac{d_0}{2}\right)\\
&\hspace{0.2 in}+(n-1)(K-\tn_K^2(\tfrac{d_0}{2}))(2\psi\left(\frac{d_0}{2}\right) + \eps e^{Ct}).
\end{split}
\end{align}
Now $ \nabla \| \nabla \omega(y_0,t_0)\|^2 =  2\nabla_{\nabla \omega(y_0)}  \nabla \omega(y_0, t_0)$. Since $\nabla \omega =  \sum_{i=1}^{n} \langle \nabla \omega, e_i \rangle e_i$,  we have
 \begin{align*}
 \langle \nabla \| \nabla \omega(y_0, t_0) \|^2, e_n \rangle & = \sum_{i=1}^{n-1} 2 \langle \nabla \omega(y_0), e_i \rangle  \langle \nabla_{e_i}\nabla\omega(y_0,t_0), e_n \rangle\\
 &\hspace{0.2 in}+ 2 \langle \nabla \omega(y_0), e_n \rangle  \langle \nabla_{e_n}\nabla\omega(y_0,t_0), e_n \rangle.
\end{align*}
Applying the first variation identity  (\ref{norfirstvar1}) and (\ref{tanfirstvar2}), we obtain
\begin{align*}
 \langle \nabla \| \nabla \omega(y_0, t_0) \|^2, e_n \rangle &= 2 \sum_{i=1}^{n-1}  \langle \nabla \omega(y_0), e_i \rangle  \left(\frac{1}{\sn_K(d_0)}\langle\nabla\omega(x_0),e_i\rangle -\frac{\cs_K(d_0)}{\sn_K(d_0)}\langle\nabla\omega(y_0),e_i\rangle\right)\\
 & \hspace{3.0 in}  + 2\psi'\left(\tfrac{d_0}{2}\right)  \langle \nabla \omega(y_0), e_n \rangle.
\end{align*}
Similarly, applying the first variation identity  (\ref{norfirstvar2}) and (\ref{tanfirstvar1}) and combine above, we have
	\begin{align*}
	&\langle \nabla \| \nabla \omega(y_0,t_0)\|^2  , e_n \rangle  -\langle \nabla \| \nabla \omega(x_0,t_0)\|^2, e_n \rangle     \\
	&= 2\sum_{i=1}^{n-1}  \langle \nabla \omega(y_0), e_i \rangle
\left(\frac{1}{\sn_K(d_0)}\langle\nabla\omega(x_0),e_i\rangle -\frac{\cs_K(d_0)}{\sn_K(d_0)}\langle\nabla\omega(y_0),e_i\rangle\right) + 2\psi'\left(\tfrac{d_0}{2}\right)  \langle \nabla \omega(y_0), e_n \rangle \\
	&\hspace{0.2 in} -2\sum_{i=1}^{n-1} \langle \nabla \omega(x_0), e_i \rangle  \left(\frac{\cs_K(d_0)}{\sn_K(d_0)}\langle \nabla\omega(x_0),e_i\rangle -\frac{1}{\sn_K(d_0)}\langle \nabla\omega(y_0),e_i\rangle\right)  - 2\psi'\left(\tfrac{d_0}{2}\right)  \langle \nabla \omega(x_0), e_n \rangle \\
	&=-\frac{2}{\sn_K(d_0)}\sum_{i=1}^{n-1}\left[ \langle \nabla \omega(y_0), e_i \rangle  -\langle \nabla \omega(x_0), e_i \rangle \right]^2 -
\frac{2 (\cs_K(d_0)-1) }{\sn_K(d_0)} \sum_{i=1}^{n-1} \left[ \langle \nabla \omega(y_0), e_i \rangle^2 + \langle \nabla \omega(x_0), e_i \rangle ^2\right] \\
&  \hspace{3.3in} + 2\psi'\left(\tfrac{d_0}{2}\right) \left[  \langle \nabla \omega(y_0), e_n \rangle  -  \langle \nabla \omega(x_0), e_n \rangle   \right] \\
& \le   - \frac{2 (\cs_K(d_0)-1) }{\sn_K(d_0)} \sum_{i=1}^{n-1} \left[ \langle \nabla \omega(y_0), e_i \rangle^2 + \langle \nabla \omega(x_0), e_i \rangle ^2\right]  + 2\psi'\left(\tfrac{d_0}{2}\right) \left[  \langle \nabla \omega(y_0), e_n \rangle  -  \langle \nabla \omega(x_0), e_n \rangle   \right] \\
& = 2\tn_K(\tfrac{d_0}{2})\left\{- 2\lambda_1  - \Delta \omega(y_0) - \Delta \omega(x_0)\right\}
	 -2\tn_K(\tfrac{d_0}{2})  \left\{  \langle \nabla \omega(y_0), e_n \rangle^2  +   \langle \nabla \omega(x_0), e_n \rangle^2  \right\}  \\
&  \hspace{3.3in}   + 2\psi'\left(\tfrac{d_0}{2}\right) \left[2\psi \left(\tfrac{d_0}{2}\right) + \eps e^{Ct_0} \right] .
	\end{align*}
	Here in the last equality  we used  the identity $\tn_K(\frac{d_0}{2}) = \frac{1-\cs_K(d_0)}{\sn_K(d_0)}$,  $\| \nabla \omega\|^2 = \|\nabla\log u\|^2 = \frac{\Delta u}{u} - \Delta \log u= -\lambda_1 -\Delta \omega$, and $Z_\eps(x_0,y_0,t_0)=0$.

Plugging this into (\ref{key-inequality}) and using (\ref{tanfirstvar1}), (\ref{tanfirstvar2}),  we obtain
\begin{align}
\begin{split}  \label{lastkeyinequality}
 &- 4 \tn_K(\tfrac{d_0}{2}) \lambda_1
	 -2\tn_K(\tfrac{d_0}{2})  \left\{  \langle \nabla \omega(y_0), e_n \rangle^2  +   \langle \nabla \omega(x_0), e_n \rangle^2  \right\} \\
	 &\hspace{0.2 in} + 2\psi'\left(\tfrac{d_0}{2}\right) \left[2\psi \left(\tfrac{d_0}{2}\right) + \eps e^{Ct_0} \right]  -2\frac{\dd\psi}{\dd t} - C\eps e^{Ct_0}  \\
&  \ge
    -2\psi''\left(\tfrac{d_0}{2}\right) +2(n+1)\tn_K\left(\tfrac{d_0}{2}\right)\psi'\left(\tfrac{d_0}{2}\right)+(n-1)(K-\tn_K^2(\tfrac{d_0}{2}))(2\psi\left(\tfrac{d_0}{2}\right) + \eps e^{Ct}).
\end{split}
\end{align}
Lastly
\begin{align*}
 \langle \nabla \omega(y_0), e_n \rangle^2  +   \langle \nabla \omega(x_0), e_n \rangle^2
&\geq \frac{\left( \langle \nabla \omega(y_0), e_n \rangle  -   \langle \nabla \omega(x_0), e_n \rangle  \right)^2 }{2}\\
& = \frac{ \left( 2\psi\left(\tfrac{d_0}{2}\right) +\eps e^{Ct_0}\right)^2 }{2} \ge 2\psi^2\left(\tfrac{d_0}{2}\right)+ 2\psi\left(\tfrac{d_0}{2}\right) \eps e^{Ct_0}.
\end{align*}
Now when $K \ge 0$,
\[ -2\tn_K(\tfrac{d_0}{2})  \left\{  \langle \nabla \omega(y_0), e_n \rangle^2 +   \langle \nabla \omega(x_0), e_n \rangle^2 \right\}  \le  -2\tn_K(\tfrac{d_0}{2}) \left[ 2\psi^2\left(\tfrac{d_0}{2}\right)+ 2\psi\left(\tfrac{d_0}{2}\right) \eps e^{Ct_0} \right].
\]
Choose $C > \sup_{[0, D_0/2]\times [0,T]}\{2 \psi'  -4\tn_K \psi-(n-1)(K-\tn_K^2(s))\}$, (this is independent of $\epsilon$ as required) then, as $\eps >0$,
(\ref{lastkeyinequality}) becomes
\begin{align*}
&2 \psi''\left(\tfrac{d_0}{2}\right)+4 \psi'\left(\tfrac{d_0}{2}\right)\psi\left(\tfrac{d_0}{2}\right)
-2\tn_K\left(\tfrac{d_0}{2}\right)\left[ (n+1)\psi'\left(\tfrac{d_0}{2}\right) +2\lambda_1+2\psi^2\left(\tfrac{d_0}{2}\right)\right] \\
&-(n-1)(K-\tn_K^2(\tfrac{d_0}{2}))(2\psi\left(\tfrac{d_0}{2}\right))-2\frac{\dd\psi}{\dd t} >0,
\end{align*}
which is a contradiction to our assumption.
\end{proof}

With similar proof we also obtain the following preserving of log-concavity estimate.
\begin{theorem} \label{log-con-preserve2}
Let $\Omega$ and  $u$ be as in Theorem \ref{log-con-preserve}. Suppose $\psi_0:[0,D/2] \to \mathbb{R}$ satisfies
\begin{equation*}
\langle \nabla \log u(y,0), \gamma'(\tfrac{d}{2}) \rangle - \langle \nabla\log u(x,0),\gamma'(-\tfrac{d}{2})\rangle \leq 2\psi_0|_{s=\frac{d}{2}} + (n-1)\tn_K(\tfrac{d}{2}).
\end{equation*}
Let $\psi \in C^0([0,D/2])\times \mathbb{R}_+) \cap C^\infty([0,D/2]\times (0,\infty))$ be a solution of
\begin{equation}\label{psiflow}
\begin{cases}
\frac{\dd\psi}{\dd t} \geq \psi''(s,t) + 2\psi\psi'(s,t) -2\tn_K(s)(\psi'(s) + \psi^2(s) + \lambda_1) & \text{ on } [0,D/2]\times \mathbb{R}_+ \\
\psi(\cdot,0) = \psi_0(\cdot) \\
\psi(0,t) = 0 \\
\psi(s,t) \leq 0.
\end{cases}
\end{equation}
Then
\begin{equation*}
\langle \nabla \log u(y,t), \gamma'(\tfrac{d}{2}) \rangle - \langle \nabla\log u(x,t),\gamma'(-\tfrac{d}{2})\rangle \leq 2\psi(s,t)|_{s=\frac{d}{2}} + (n-1)\tn_K(\tfrac{d}{2})
\end{equation*}
for all $t \geq 0$ and $D \leq \frac{\pi}{\sqrt{K}}$ if $K>0$.
\end{theorem}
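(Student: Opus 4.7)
The plan is to mimic the maximum-principle argument of Theorem~\ref{log-con-preserve}, incorporating the extra additive term $(n-1)\tn_K(d/2)$ into the two-point function. Write $\omega=\log u$ and define
\begin{equation*}
Z(x,y,t) := \left\langle\nabla\omega(y,t),\gamma'(\tfrac{d}{2})\right\rangle - \left\langle\nabla\omega(x,t),\gamma'(-\tfrac{d}{2})\right\rangle - 2\psi(\tfrac{d}{2},t) - (n-1)\tn_K(\tfrac{d}{2}),
\end{equation*}
and $Z_\eps := Z-\eps e^{Ct}$ for a large constant $C$ to be chosen later. The goal is to show $Z_\eps<0$ on $\hat{\Omega}\times [0,T]$ for every $\eps>0$ and every $T<\infty$; sending $\eps\to 0$ then yields the desired estimate.

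Since $(n-1)\tn_K(d/2)$ is bounded and Lipschitz on $[0,D/2]$ under the hypothesis $D\le\pi/\sqrt{K}$, the boundary analysis (Lemmas~\ref{Hessianest} and~\ref{boundary-cover-lemma}) applies verbatim after absorbing this quantity into the Lipschitz constant $L$, showing $Z_\eps<0$ on an open neighborhood of $\partial\hat{\Omega}$ in $\hat{\Omega}\times[0,T]$. Suppose for contradiction that there is a first interior zero at $(x_0,y_0,t_0)$. The first variations in the normal directions $0\oplus e_i$ and $e_i\oplus 0$ reduce to the identities (\ref{norfirstvar1})--(\ref{norfirstvar2}) exactly as in Theorem~\ref{log-con-preserve}, since $(n-1)\tn_K(d/2)$ depends only on $d$ and the first variation of $d$ in these directions vanishes. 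The tangential first variations (\ref{tanfirstvar1})--(\ref{tanfirstvar2}) each acquire an extra $\tfrac{n-1}{2}\tn_K'(d_0/2)=\tfrac{n-1}{2}(K+\tn_K^2(d_0/2))$ contribution from differentiating the boundary correction, and the second-order inequalities pick up matching contributions from the first and second variations of distance multiplied by $\tn_K'$ and $\tn_K''$.

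Combining the time-derivative bound with the second-order spatial inequality, and applying the Bochner--Weitzenb\"ock formula $\Delta\nabla\omega=\nabla\Delta\omega+(n-1)K\nabla\omega$, the identity $\|\nabla\omega\|^2=-\lambda_1-\Delta\omega$, and the Cauchy--Schwarz bound
\begin{equation*}
\langle\nabla\omega(y_0),e_n\rangle^2+\langle\nabla\omega(x_0),e_n\rangle^2 \ge \tfrac12\bigl(2\psi(\tfrac{d_0}{2},t_0)+(n-1)\tn_K(\tfrac{d_0}{2})+\eps e^{Ct_0}\bigr)^2,
\end{equation*}
one assembles a single scalar inequality at $(d_0/2,t_0)$. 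The algebra should reorganize so that the $(n-1)\tn_K$-weighted contributions produced by the new boundary correction exactly cancel the terms $-(n-1)\tn_K\psi'$ and $-(n-1)(K-\tn_K^2)\psi$ that were present in (\ref{psi-cond}) but absent from (\ref{psiflow}), leaving the strict evolution inequality
\begin{equation*}
\partial_t\psi > \psi''+2\psi\psi'-2\tn_K(s)\bigl(\psi'+\psi^2+\lambda_1\bigr)
\end{equation*}
at $(d_0/2,t_0)$, contradicting (\ref{psiflow}) once $C$ is chosen large enough independent of $\eps$.

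The main obstacle will be the careful bookkeeping of the new $(n-1)\tn_K(d/2)$ contributions through every normal and tangential variation, and verifying the exact cancellation of the higher-order $\tn_K^2$, $\tn_K^3$ and $\tn_K'$ coefficients that arise. The extra sign hypothesis $\psi\le 0$ in (\ref{psiflow}), which replaces the $K\ge 0$ assumption used in the final step of Theorem~\ref{log-con-preserve}, is what makes the Cauchy--Schwarz step go through in the correct direction: it guarantees that the mixed term $2\psi\cdot\eps e^{Ct_0}$ produced after squaring has the right sign to be absorbed into the $C\eps e^{Ct_0}$ correction, so that the contradiction with (\ref{psiflow}) can be closed.
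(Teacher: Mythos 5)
Your proposal follows essentially the same route as the paper: the same modified two-point function $Z_\eps$ with the additive correction $(n-1)\tn_K(d/2)$, the same boundary analysis, the same first/second variation bookkeeping for the new term, and the same Bochner--Weitzenb\"ock, $\|\nabla\omega\|^2=-\lambda_1-\Delta\omega$, and Cauchy--Schwarz steps, with the cancellations you anticipate indeed working out. One small caveat: the hypothesis $\psi\le 0$ does not \emph{replace} $K\ge 0$ --- the paper still assumes $K\ge 0$ (inherited from Theorem~\ref{log-con-preserve}) and uses $\tn_K\ge 0$ when multiplying the Cauchy--Schwarz bound by $-2\tn_K(\tfrac{d_0}{2})$, while the remaining $\eps$-terms are absorbed by choosing $C>\sup\{2\psi'-4\psi\tn_K\}$.
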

\begin{remark}
Note that the stationary solutions of $\psi$ satisfy
\begin{equation*}
0= (\psi'(s) + \psi^2(s) +\lambda_1)' - 2\tn_K(s)(\psi'+\psi^2(s)+\lambda_1).
\end{equation*}
Solving the ODE $y'-2\tn_K(s)y = 0$, we have $y= y(0)K\cs_K^{-2}(s)$.  Hence an initial condition $y(0) = 0$ would imply the trivial solution in $y$, which is equivalent to $\psi'+\psi^2 + \lambda_1=0$.  The condition $y(0)=0$ can be obtained by adding the condition $\psi'(0) = -\lambda_1$. The proof below makes no assumptions on $\psi'$. The difference of the stationary solutions of (\ref{psi-cond}) and (\ref{psiflow}) does not have a definite sign, so the two estimates have independent interests.
\end{remark}
\begin{proof}
Since the proof is similar, we will only specify the changes. As in the proof of Theorem \ref{log-con-preserve}, for any $\eps > 0$, define the function $Z_\eps$ on $\hat{ \Omega}  \times \mathbb{R}_+$ by
\begin{equation*}
Z_\eps(x,y,t) := \langle \nabla \log u (y,t), \gamma'(\tfrac{d}{2}) \rangle - \langle \nabla\log u(x,t),\gamma'(-\tfrac{d}{2})\rangle -2\psi(\tfrac{d}{2},t) - (n-1)\tn_K(\tfrac{d}{2}) -\eps e^{Ct}
\end{equation*}
for some $C>0$ to be chosen later.  The boundary case is handled in the same way as before.  By assumption, $Z_{\eps}(x,y,0) < 0$ on $\hat{\Omega}$.   Let $t_0$ be the first time that $Z_{\eps}(x_0,y_0,t_0)=0$.     First the time derivative \eqref{timederivw} and the first derivative in the normal directions \eqref{norfirstvar1} are the same.  Taking the derivative tangent to the geodesic, we have
\begin{equation*}
\langle \nabla_{e_n}\nabla \omega(x_0),e_n\rangle = \langle \nabla_{e_n}\nabla\omega(y_0),e_n\rangle = \psi'(\tfrac{d_0}{2}) + \frac{n-1}{2}K\cs_K^{-2}(\tfrac{d_0}{2}).
\end{equation*}
Taking the second variation in the normal direction as in \eqref{orthogonalvar},
\begin{align*}
\begin{split}
0&\ge \left\langle \nabla_{e_i}\nabla_{e_i}\nabla \omega(y_0),e_n\right\rangle
-\left\langle \nabla_{e_i}\nabla_{e_i}\nabla\omega(x_0),e_n\right\rangle
 -2\tn_K\left( \tfrac {d_0}{2}\right) \left[\langle\nabla_{e_i}\nabla\omega(y_0),e_i\rangle
 +\langle\nabla_{e_i}\nabla\omega(x_0), e_i\rangle\right] \\
&\hspace{0.2 in}- \tn_K^2(\tfrac{d_0}{2})\left[\langle\nabla\omega(y_0),e_n\rangle -\langle\nabla\omega(x_0),e_n\rangle\right]\\
&\hspace{0.2 in}+ 2\tn_K\left(\tfrac{d_0}{2}\right)\psi'\left(\tfrac{d_0}{2}\right) +(n-1)K\tn_K(\tfrac{d_0}{2})\cs_K^{-2}(\tfrac{d_0}{2}).
\end{split}
\end{align*}
Next the second variation in the tangential direction as in \eqref{tangentialvar},
\begin{align*}
\begin{split}
0&\ge \left\langle \nabla_{e_n}\nabla_{e_n}\nabla\omega(y_0),e_n\right\rangle
  -\left\langle \nabla_{e_n}\nabla_{e_n}\nabla\omega(x_0),e_n \right\rangle-2\psi''\left(\tfrac{d_0}{2}\right)-2(n-1)K\tn_K(\tfrac{d_0}{2})\cs_K^{-2}(\tfrac{d_0}{2}).
\end{split}
\end{align*}

Adding up from $i =1, \cdots, n-1$ and the second tangential variation,
\begin{align*}
\begin{split}
0
&\geq \langle\Delta\nabla \omega(y_0),e_n \rangle-\langle \Delta\nabla\omega(x_0),e_n\rangle
 -2\tn_K\left(\tfrac{d_0}{2}\right)\sum_{i=1}^{n-1}\left[ \langle \nabla_{e_i}\nabla\omega(y_0),e_i\rangle
 + \langle \nabla_{e_i}\nabla\omega(x_0),e_i\rangle\right] \\
&\hspace{0.2 in}
-(n-1)\tn_K^2(\tfrac{d_0}{2})\left[ \langle\nabla\omega(y_0),e_n\rangle - \langle\nabla \omega(x_0),e_n\rangle\right]
-2\psi''\left(\tfrac{d_0}{2}\right) +2(n-1)\tn_K(\tfrac{d_0}{2})\psi'\left(\tfrac{d_0}{2}\right)\\
&\hspace{0.2 in}+(n-1)^2K\tn_K(\tfrac{d_0}{2})\cs_K^{-2}(\tfrac{d_0}{2}) - 2(n-1)K\tn_K(\tfrac{d_0}{2})\cs_K^{-2}(\tfrac{d_0}{2}).
\end{split}
\end{align*}
Combining the above with the time derivative, the first variations, and the Bochner formula as before, we have
\begin{align}
\begin{split}\label{main-ineq-3.2}
0 &\leq 2\psi'' + 2\left\{\psi'\left(\tfrac{d_0}{2}\right)+\frac{(n-1)}{2}K\cs_K^{-2}\left(\tfrac{d_0}{2}\right)\right\}(\nabla_n\omega(y_0)
- \nabla_n\omega(x_0))- 2\frac{\dd\psi}{\dd t} - C\eps e^{Ct_0} \\
&\hspace{0.2 in} + 2\tn_K(\tfrac{d_0}{2})\sum_{i=1}^{n-1}(\langle \nabla_i\nabla\omega(y_0),e_i\rangle + \langle \nabla_i\nabla\omega(x_0),e_i\rangle) \\
&\hspace{0.2 in} -(n-1)(K-\tn_K^2(\tfrac{d_0}{2}))(\nabla_n\omega(y_0)-\nabla_n\omega(x_0))\\
&\hspace{0.2 in} + \sum_{i=1}^{n-1}\left\{ 2\langle \nabla_i\nabla\omega(y_0,t)\nabla_i\omega(y_0,t),\gamma'\rangle  - 2\langle (\nabla_i\nabla\omega(x_0,t)\nabla_i\omega(x_0,t),\gamma'\rangle\right\} -2(n-1)\tn_K(\tfrac{d_0}{2})\psi' \\
&\hspace{0.2 in} -(n-1)^2K\cs_K^{-2}(\tfrac{d_0}{2})\tn_K(\tfrac{d_0}{2}) + (n-1)2K\cs_K^{-2}(\tfrac{d_0}{2})\tn_K(\tfrac{d_0}{2}),
\end{split}
\end{align}
where the main difference between the setting here and Theorem \ref{log-con-preserve} is the first, third, and last line.  At the point $(x_0,y_0,t_0)$, the middle term in the first line becomes
\begin{align*}
&2\left\{\psi'+\frac{(n-1)}{2}K\cs_K^{-2}\left(\tfrac{d_0}{2}\right)\right\}(\nabla_n\omega(y_0) - \nabla_n\omega(x_0)) \\
&=2\left\{\psi'+\frac{(n-1)}{2}K\cs_K^{-2}\left(\tfrac{d_0}{2}\right)\right\}\left(2\psi + (n-1)\tn_K(\tfrac{d_0}{2}) + \eps e^{Ct_0}\right) \\
&= 4\psi\psi' + 2(n-1)\psi'\tn_K(\tfrac{d_0}{2}) + 2\psi(n-1)K\cs_K^{-2}(\tfrac{d_0}{2}) + (n-1)^2K\cs_K^{-2}(\tfrac{d_0}{2})\tn_K(\tfrac{d_0}{2})\\
&\hspace{0.2 in} +2\eps e^{Ct_0}\left\{\psi'+\frac{(n-1)}{2}K\cs_K^{-2}\left(\tfrac{d_0}{2}\right)\right\}.
\end{align*}
The third line becomes
\begin{align*}
&-(n-1)(K-\tn_K^2(\tfrac{d_0}{2}))(\nabla_n\omega(y_0)-\nabla_n\omega(x_0))\\
 &\hspace{0.2 in}= -(n-1)(K-\tn_K^2(\tfrac{d_0}{2}))(2\psi+(n-1)\tn_K(\tfrac{d_0}{2})+\eps e^{Ct_0})
\end{align*}
By applying the first variation identities and completing the square, the first term in the fourth line of \eqref{main-ineq-3.2} becomes
\begin{align*}
&\sum_{i=1}^{n-1}\left\{2\langle \nabla_i\nabla\omega(y_0,t)\nabla_i\omega(y_0,t),\gamma'\rangle
 -2 \langle (\nabla_i\nabla\omega(x_0,t)\nabla_i\omega(x_0,t),\gamma'\rangle \right\}\\
&=-\frac{2}{\sn_K(d_0)}\sum_{i=1}^{n-1}\left\{ (\nabla_i\omega(y_0)-\nabla_i\omega(x_0))^2
   + (\cs_K(d_0)-1)((\nabla_i\omega(y_0))^2 + (\nabla_i\omega(x_0))^2)\right\}\\
&\leq -4\lambda_1\tn_K(\tfrac{d_0}{2}) - 2\tn_K(\tfrac{d_0}{2})\sum_{i=1}^{n-1}\left( \langle\nabla_i\nabla \omega(y_0),e_i\rangle + \langle\nabla_i\nabla\omega(x_0),e_i\rangle\right) \\
&\hspace{0.2 in}-4\tn_K(\tfrac{d_0}{2})\left(\psi'(\tfrac{d_0}{2}) + \frac{n-1}{2}K\cs_K^{-2}(\tfrac{d_0}{2})\right) -2\tn_K(\tfrac{d_0}{2}) \left\{(\nabla_n\omega(y_0))^2 + (\nabla_n\omega(x_0))^2\right\},
\end{align*}
The last term can be bounded by
\begin{align*}
\left(\nabla_n\omega(y_0)\right)^2 + &\left(\nabla_n\omega(x_0)\right)^2
\geq \frac{(\nabla_n\omega(y_0) - \nabla_n\omega(x_0))^2}{2} \\
&=\frac{(2\psi + (n-1)\tn_K(\tfrac{d_0}{2}) + \eps e^Ct_0)^2}{2} \\
&\geq 2\psi^2(\tfrac{d_0}{2}) + 2\psi(\tfrac{d_0}{2})(n-1)\tn_K(\tfrac{d_0}{2}) +\frac{(n-1)^2}{2}\tn_K^2(\tfrac{d_0}{2})\\
&\hspace{0.2 in}+  2\eps\psi(\tfrac{d_0}{2}) e^{Ct_0} + \eps(n-1)\tn_K(\tfrac{d_0}{2})e^{Ct_0},
\end{align*}
so that for $K\geq 0$,
\begin{align*}
&-2\tn_K(\tfrac{d_0}{2})\left[(\nabla_n\omega(y_0))^2 + (\nabla_n\omega(x_0))^2\right] \\
&\hspace{0.2 in} \leq -4\tn_K(\tfrac{d_0}{2})\psi^2 - 4\psi(\tfrac{d_0}{2})(n-1)\tn_K^2(\tfrac{d_0}{2}) -(n-1)^2\tn_K^3(\tfrac{d_0}{2})\\
&\hspace{0.2 in}-4\eps\psi\tn_K(\tfrac{d_0}{2}) e^{Ct_0} - 2\eps(n-1)\tn^2_K(\tfrac{d_0}{2})e^{Ct_0}.
\end{align*}
Inserting the above inequalities into \eqref{main-ineq-3.2},
\begin{align*}
0 &\leq 2\psi''(\tfrac{d_0}{2}) + 4\psi(\tfrac{d_0}{2})\psi'(\tfrac{d_0}{2}) -2\frac{\dd \psi}{\dd t}-4(\psi' +\psi^2+\lambda_1)\tn_K(\tfrac{d_0}{2})+ 2\psi(n-1)K(2-\cs_K^{-2}(\tfrac{d_0}{2})\\
&\hspace{0.2 in} -2(n-1)(K-\tn_K^2(\tfrac{d_0}{2}))\psi -(n-1)^2(K-\tn_K^2(\tfrac{d_0}{2}))\tn_K(\tfrac{d_0}{2}) -\eps(n-1)(K-\tn_K^2(\tfrac{d_0}{2}))e^{Ct_0}  \\
&\hspace{0.2 in}  -4\eps\psi\tn_K(\tfrac{d_0}{2})e^{Ct_0} - 2\eps(n-1)\tn^2_K(\tfrac{d_0}{2})e^{Ct_0}+2\eps\psi' e^{Ct_0} +\eps(n-1)K\cs_K^{-2}(\tfrac{d_0}{2})e^{Ct_0} -C\eps e^{Ct_0}\\
&\hspace{0.2 in}-(n-1)^2\tn_K^3(\tfrac{d_0}{2})\\
&\leq  2\psi''(\tfrac{d_0}{2}) + 4\psi(\tfrac{d_0}{2})\psi'(\tfrac{d_0}{2}) -2\frac{\dd \psi}{\dd t}-4(\psi' +\psi^2+\lambda_1)\tn_K(\tfrac{d_0}{2}) -(n-1)^2K\tn_K(\tfrac{d_0}{2}) \\
&\hspace{0.2 in}  -4\eps\psi\tn_K(\tfrac{d_0}{2}) e^{Ct_0} - \eps(n-1)\tn^2_K(\tfrac{d_0}{2})e^{Ct_0}+2\eps\psi' e^{Ct_0} +\eps(n-1)K(\cs_K^{-2}(\tfrac{d_0}{2})-1)e^{Ct_0} -C\eps e^{Ct_0}\\
&\leq  2\psi''(\tfrac{d_0}{2}) + 4\psi(\tfrac{d_0}{2})\psi'(\tfrac{d_0}{2}) -2\frac{\dd \psi}{\dd t}-4(\psi' +\psi^2+\lambda_1)\tn_K(\tfrac{d_0}{2}) -(n-1)^2K\tn_K(\tfrac{d_0}{2}) \\
&\hspace{0.2 in}  -4\eps\psi\tn_K(\tfrac{d_0}{2}) e^{Ct_0}+2\eps\psi' e^{Ct_0} -C\eps e^{Ct_0}
\end{align*}
When we choose $C > \sup_{[0,\tfrac{D}{2}]\times [0,t_0]}\{2\psi'-4\psi\tn_K(s)\}$ which is independent of $\eps$, we get a contradiction.
\end{proof}

\subsection{Applications}
From the  proof of Theorem~\ref{log-con-preserve},  we get the following elliptic version.
\begin{theorem}  \label{e-log-con-est}
Given $\Omega \subset \mathbb M^n_K$ a bounded strict convex domain with diameter $D$. Assume $K \ge 0$ and $D \le D_0 < \pi/\sqrt{K}$ when $K>0$.  Let $\phi_1 >0$ be a first eigenfunction of the Laplacian on $\Omega$ with Dirichlet boundary condition associated to the eigenvalue $\lambda_1$.  Then for $\forall x, y  \in \Omega$, with $x \not= y$,
 \begin{equation}  \label{e-log-est}
 \langle \nabla \log \phi_1 (y), \gamma'(\tfrac{d}{2}) \rangle - \langle \nabla\log \phi_1(x),\gamma'(-\tfrac{d}{2})\rangle \leq 2 \psi \left(\frac{d(x,y)}{2}\right),
 \end{equation}
 where $\gamma$ is the unit normal minimizing geodesic with $\gamma(-\tfrac{d}{2}) =x$ and $\gamma(\tfrac{d}{2}) = y$, and $\psi: [0, \frac D2] \rightarrow \mathbb R$ is any $C^2$ function with $\psi(0) =0, \ 2\psi'(s) -4 \tn_K(s) \psi -(n-1)(K-\tn_K^2(s)) \le 0$ and
 \begin{equation}
 \psi''(s) + 2 \psi(s) \psi'(s) - \tn_K(s) \left[ (n+1) \psi'(s) +2 \psi^2(s) + 2 \lambda_1 \right] -(n-1)(K-\tn_K^2(s)) \psi(s) \le 0.  \label{e-psi-cond}
 \end{equation}
\end{theorem}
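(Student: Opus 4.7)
The plan is to adapt the proof of Theorem~\ref{log-con-preserve} to the elliptic setting by dropping the time dependence and using the Laplace equation $\Delta\phi_1 = -\lambda_1 \phi_1$ directly in place of the heat equation. Fix $\epsilon > 0$ and consider the two-point function on $\hat\Omega = \Omega\times\Omega\setminus\{(x,x)\}$
\begin{equation*}
Z_\epsilon(x,y) := \langle\nabla\log\phi_1(y),\gamma'(\tfrac{d}{2})\rangle - \langle\nabla\log\phi_1(x),\gamma'(-\tfrac{d}{2})\rangle - 2\psi\!\left(\tfrac{d(x,y)}{2}\right) - \epsilon,
\end{equation*}
where $\gamma$ is the minimizing unit-speed geodesic from $x$ to $y$. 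The target \eqref{e-log-est} follows by showing $Z_\epsilon < 0$ on $\hat\Omega$ for every $\epsilon > 0$ and sending $\epsilon \to 0$.

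First I would verify that $Z_\epsilon$ is strictly negative near $\partial\hat\Omega$, including the diagonal. This is exactly the content of Lemma~\ref{boundary-cover-lemma} (applied with $\beta = \epsilon/2$): the proof there only uses that $\phi_1 > 0$ in $\Omega$ and, via Hopf's lemma applied to the elliptic equation $\Delta\phi_1 = -\lambda_1\phi_1$, that $\langle\nabla\phi_1,\nu\rangle < 0$ on $\partial\Omega$. Since $Z_\epsilon$ is continuous on $\overline{\hat\Omega}\setminus\{x=y\}$ and satisfies $Z_\epsilon < 0$ on a neighborhood of the diagonal and of $\partial(\Omega\times\Omega)$, if $Z_\epsilon$ attains a nonnegative value anywhere, it attains its supremum at an interior point $(x_0,y_0) \in \hat\Omega$ with $x_0 \neq y_0$ both in the interior of $\Omega$. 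At such a maximum, $\nabla Z_\epsilon = 0$ and $\nabla^2 Z_\epsilon \leq 0$.

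The bulk of the argument is then to run the same first- and second-variation computations (\ref{norfirstvar1})--(\ref{tangentialvar}) that appear in the proof of Theorem~\ref{log-con-preserve}, choosing the Jacobi fields $Q_i(s) = \tfrac{\sn_K(\tfrac{d_0}{2}\pm s)}{\sn_K(d_0)}e_i$ for the normal first variations and $J_i(s) = \tfrac{\cs_K(s)}{\cs_K(d_0/2)}e_i$ for the diagonal second variations $E_i = e_i \oplus e_i$, together with $E_n = e_n\oplus(-e_n)$ for the tangential second variation. Summing the second-variation inequalities and invoking the Bochner--Weitzenb\"ock identity (\ref{Bochner}) produces a relation among $\nabla\Delta\log\phi_1$, $\psi''$, $\psi'$, and the Ricci curvature of $\mathbb{M}^n_K$. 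The crucial point is that the time-derivative term vanishes, and the heat-equation identity $\partial_t\omega - \Delta\omega = \|\nabla\omega\|^2$ is replaced by the stationary identity $\Delta\log\phi_1 = -\lambda_1 - \|\nabla\log\phi_1\|^2$. The tangential gradient-squared terms are then bounded using the first-variation identities and the relation $\tn_K(d_0/2) = (1-\cs_K(d_0))/\sn_K(d_0)$ exactly as in the parabolic case, except that every occurrence of $\epsilon e^{Ct_0}$ is replaced by the constant $\epsilon$.

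The resulting key inequality, after collecting terms, takes the form
\begin{equation*}
0 < \bigl[\psi''(\tfrac{d_0}{2}) + 2\psi\psi' - \tn_K((n{+}1)\psi' + 2\psi^2 + 2\lambda_1) - (n{-}1)(K-\tn_K^2)\psi\bigr] + \tfrac{\epsilon}{2}\bigl[2\psi' - 4\tn_K\psi - (n{-}1)(K-\tn_K^2)\bigr].
\end{equation*}
The hypothesis \eqref{e-psi-cond} makes the first bracket nonpositive, and the auxiliary hypothesis $2\psi' - 4\tn_K\psi - (n-1)(K-\tn_K^2)\leq 0$ makes the second bracket nonpositive; since $\epsilon > 0$, the sum is nonpositive, contradicting the strict inequality. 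The main obstacle is bookkeeping: tracking how the auxiliary condition, which in Theorem~\ref{log-con-preserve} is encoded in the choice $C > \sup\{2\psi' - 4\tn_K\psi - (n-1)(K-\tn_K^2)\}$, becomes in the elliptic setting precisely the extra structural hypothesis needed to absorb the $\epsilon$-perturbation and close the argument.
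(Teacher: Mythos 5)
Your proposal is correct and takes exactly the paper's route: the paper obtains Theorem~\ref{e-log-con-est} precisely by rerunning the proof of Theorem~\ref{log-con-preserve} without the time variable, using $\Delta\log\phi_1=-\lambda_1-\|\nabla\log\phi_1\|^2$ in place of the heat-equation identity, and letting the hypothesis $2\psi'-4\tn_K\psi-(n-1)(K-\tn_K^2)\le 0$ absorb the $\eps$-terms that the constant $C$ handled in the parabolic case. The only cosmetic refinement is to take $\eps$ equal to the (positive) supremum of $Z$ so that $Z_\eps$ vanishes exactly at the interior maximum, making the substitution $\langle\nabla\log\phi_1(y_0),e_n\rangle-\langle\nabla\log\phi_1(x_0),e_n\rangle=2\psi+\eps$ an identity rather than an inequality.
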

\begin{remark}
Here we have the assumption $ 2\psi'(s) -4 \tn_K(s) \psi -(n-1)(K-\tn_K^2(s)) \le 0$ which is not needed in Theorem~\ref{log-con-preserve}.
\end{remark}

In fact we can formulate for vector fields.
\begin{theorem}  \label{vector-est}
Given $\Omega \subset \mathbb M^n_K$ a bounded strict convex domain with diameter $D$. Assume $K \ge 0$ and $D \le D_0 < \pi/\sqrt{K}$ when $K>0$.  Let $X$ be a vector field satisfying
\begin{eqnarray}
\begin{split}  \label{X-cond}
\Delta X & =  2 \nabla_X X + \Ric (X, \cdot)  \\
\diver X & \le  -|X|^2 -\lambda_1, \ \mbox{when} \ K>0 ,
\end{split}
\end{eqnarray}
 $\psi: [0, \frac D2] \rightarrow \mathbb R$  satisfies the same condition as in Theorem~\ref{e-log-con-est}.  Then
\begin{equation*}
 Z(x,y) : =  \langle X (y), \gamma'(\tfrac{d}{2}) \rangle - \langle X(x),\gamma'(-\tfrac{d}{2})\rangle  -  2 \psi \left(\tfrac{d(x,y)}{2}\right),
\end{equation*}
where $\gamma$ is the unit normal minimizing geodesic with $\gamma(-\tfrac{d}{2}) =x$ and $\gamma(\tfrac{d}{2}) = y$, $d = d(x,y)$ can not attain a positive maximum in the interior of $\hat{\Omega}$.
\end{theorem}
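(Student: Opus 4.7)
The plan is to imitate the proof of Theorem~\ref{e-log-con-est} (which is itself the elliptic counterpart of Theorem~\ref{log-con-preserve}), with the gradient vector field $\nabla\log\phi_1$ replaced by the abstract vector field $X$; the two structural hypotheses in \eqref{X-cond} are exactly the ingredients that, when $X=\nabla\log\phi_1$, come respectively from the Bochner identity \eqref{Bochner} and from $\Delta\log\phi_1=-\lambda_1-|\nabla\log\phi_1|^2$. Argue by contradiction: suppose $Z$ attains a positive interior maximum at $(x_0,y_0)\in\hat\Omega$. Let $\gamma:[-d_0/2,d_0/2]\to\Omega$ be the unique minimizing geodesic from $x_0$ to $y_0$ (unique by strict convexity), pick an orthonormal frame $\{e_i\}$ at $x_0$ with $e_n=\gamma'(-d_0/2)$, parallel transport along $\gamma$, and form the directions $E_i=e_i\oplus e_i$ for $i<n$ and $E_n=e_n\oplus(-e_n)$ in $T_{(x_0,y_0)}(\Omega\times\Omega)$ exactly as in Theorem~\ref{log-con-preserve}.

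Compute the first and second variations of $Z$ along $E_1,\dots,E_n$ using the same Jacobi fields as in Section~3 ($Q_i(s)=\tfrac{\sn_K(s\pm d_0/2)}{\sn_K(d_0)}e_i$ for the normal first variations, $J_i(s)=\tfrac{\cs_K(s)}{\cs_K(d_0/2)}e_i$ for the normal second variations, plus the tangential variations along $\gamma$); since these computations are purely Riemannian and do not depend on the specific nature of $\nabla\log u$, substituting $X$ produces verbatim analogues of \eqref{norfirstvar1}--\eqref{tanfirstvar2} and of the summed second-variation inequality \eqref{second-spatial-derivative}. In particular the tangential first variations force $\langle\nabla_{e_n}X(x_0),e_n\rangle=\langle\nabla_{e_n}X(y_0),e_n\rangle=\psi'(\tfrac{d_0}{2})$, and the analogue of \eqref{second-spatial-derivative} contains the term $\langle\Delta X(y_0)-\Delta X(x_0),e_n\rangle$.

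Now feed in the conditions \eqref{X-cond}. With $\Ric=(n-1)Kg$, the first identity converts $\langle\Delta X(y_0)-\Delta X(x_0),e_n\rangle$ into $2\langle\nabla_XX(y_0)-\nabla_XX(x_0),e_n\rangle+(n-1)K\langle X(y_0)-X(x_0),e_n\rangle$; expanding each $\langle\nabla_XX,e_n\rangle$ via $X=\sum_i\langle X,e_i\rangle e_i$ and the normal first-variation identities reproduces exactly the cancellation carried out at the end of the proof of Theorem~\ref{log-con-preserve}, where $\langle\nabla\|\nabla\omega\|^2,e_n\rangle$ played the same role. The remaining trace $\sum_{i=1}^{n-1}\langle\nabla_{e_i}X,e_i\rangle=\diver X-\psi'(\tfrac{d_0}{2})$ is controlled by the divergence hypothesis $\diver X\le-|X|^2-\lambda_1$, which replaces the earlier use of $\Delta\log\phi_1=-\lambda_1-|\nabla\log\phi_1|^2$; together with the maximum-point bound $\langle X(y_0),e_n\rangle^2+\langle X(x_0),e_n\rangle^2\ge 2\psi^2(\tfrac{d_0}{2})$ (since $\langle X(y_0)-X(x_0),e_n\rangle\ge 2\psi$ there) and the auxiliary hypothesis $2\psi'-4\tn_K\psi-(n-1)(K-\tn_K^2)\le 0$, everything collapses to a direct contradiction with \eqref{e-psi-cond} evaluated at $s=d_0/2$.

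The principal obstacle is the sign and coefficient bookkeeping in this last substitution: one must verify that $\Delta X=2\nabla_XX+\Ric(X,\cdot)$ is substituted with the correct orientation, that the $\nabla_XX$ expansion truly reconstructs the same quadratic cancellation of the tangential terms as in Theorem~\ref{log-con-preserve}, and that the auxiliary sign condition on $\psi$ (absent in the parabolic case, but needed here as noted in the Remark following Theorem~\ref{e-log-con-est}) is precisely the price paid when bounding $-2\tn_K(\langle X(y_0),e_n\rangle^2+\langle X(x_0),e_n\rangle^2)$ from above. Once this bookkeeping is settled, no further analytic input is required beyond what is already in Theorem~\ref{log-con-preserve}.
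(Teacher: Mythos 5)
Your overall route is the intended one: the paper offers no separate proof of Theorem~\ref{vector-est}, presenting it as an abstraction of the maximum-principle computation in Theorem~\ref{log-con-preserve}, and you are right that the first- and second-variation identities \eqref{norfirstvar1}--\eqref{second-spatial-derivative} carry over verbatim with $X$ in place of $\nabla\log u$, and that the two hypotheses in \eqref{X-cond} are meant to replace, respectively, the Bochner step and the identity $\Delta\log\phi_1=-\lambda_1-\|\nabla\log\phi_1\|^2$.

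The gap is exactly the ``sign orientation'' you name as the principal obstacle and then leave unresolved: carried out as you describe, the substitution fails. The second-variation inequality gives an \emph{upper} bound $\langle \Delta X(y_0)-\Delta X(x_0),e_n\rangle\le B$; inserting $\Delta X=+2\nabla_XX+\Ric(X,\cdot)$ turns this into an upper bound on $2\langle\nabla_XX(y_0),e_n\rangle-2\langle\nabla_XX(x_0),e_n\rangle$. But the expansion of $\nabla_XX$ via the first-variation identities, followed by discarding $-\tfrac{2}{\sn_K(d_0)}\sum_i(\cdots)^2\le 0$ and invoking $\diver X\le-|X|^2-\lambda_1$, produces an upper bound on the \emph{same} quantity; two upper bounds cannot collide. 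In the proof of Theorem~\ref{log-con-preserve} the term $2\nabla_{\nabla\omega}\nabla\omega=\nabla\|\nabla\omega\|^2$ enters through the time derivative on the \emph{opposite} side of the inequality from $\Delta\nabla\omega$ in \eqref{key-inequality}; the stationary identity it encodes is $\Delta X+2\nabla_XX=\Ric(X,\cdot)$, which is what $X=\nabla\log\phi_1$ actually satisfies (by \eqref{Bochner}, $\Delta\nabla\log\phi_1=\nabla\Delta\log\phi_1+\Ric(\nabla\log\phi_1,\cdot)=-2\nabla_XX+\Ric(X,\cdot)$, reducing to Ni's $\Delta X+2\nabla_XX=0$ in $\mathbb R^n$), not the equation with $+2\nabla_XX$ as printed. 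With that sign the second variation yields a \emph{lower} bound on the $\nabla_XX$ difference, the $\diver X$ terms cancel against $\sum_{i<n}\langle\nabla_{e_i}X,e_i\rangle$ in $B$, and what remains is $2\times$\eqref{e-psi-cond} plus $m\bigl[2\psi'-4\tn_K\psi-(n-1)(K-\tn_K^2)\bigr]$ with $m>0$ the maximum value, which is where the auxiliary hypothesis enters. You must either correct the sign or justify the printed one; as written your key step does not reproduce the cancellation. A smaller slip in the same bookkeeping: at a positive maximum $\langle X(y_0)-X(x_0),e_n\rangle=2\psi+m$, and since $\psi$ may be negative, $(2\psi+m)^2/2\ge 2\psi^2$ is false in general; you must keep the cross term $2\psi m$ and absorb the resulting $-4\tn_K\psi m$ into the auxiliary condition on $\psi$, as the paper does with $\eps e^{Ct}$ in the parabolic case.
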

\begin{remark}
This recovers Theorem 2.1 in \cite{ni}. Clearly $X = \nabla \log \phi_1$ satisfies (\ref{X-cond}).
\end{remark}

 To prove Theorem~\ref{log-con} we still need the following observation.
 \begin{lemma}\label{band}
Let $\Omega$ be a convex domain in $\mathbb M_K^n$ with diameter $D$, then its first eigenvalue $\lambda_1 (\Omega)$ of the Laplacian with Dirichlet boundary conditions  has the following lower bound,
\begin{equation*}
\lambda_1\geq \bar\lambda_1 (n, K,D).
\end{equation*}
\end{lemma}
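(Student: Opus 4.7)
The plan is to enclose $\Omega$ inside a geodesic slab (band) of width $D$ in $\mathbb M^n_K$ and then reduce the lower bound on $\lambda_1(\Omega)$ to the one-dimensional model via a Barta-type comparison with $\bar\phi_1$.

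For the enclosure, I would pick $p, q \in \bar\Omega$ realizing the diameter, $d(p,q) = D$, and let $\gamma : [-D/2, D/2] \to \mathbb M^n_K$ be the unit-speed minimizing geodesic with $\gamma(-D/2) = p$ and $\gamma(D/2) = q$. Let $\Sigma$ be the totally geodesic hypersurface through the midpoint $\gamma(0)$ perpendicular to $\gamma'(0)$, and let $s$ denote the signed distance to $\Sigma$ (positive toward $q$). The claim is that $\Omega \subset S := \{|s| \le D/2\}$. If some $z \in \Omega$ had $s(z) > D/2$, I would derive a contradiction by showing directly $d(z, p) > D$. In the spherical model $S^n \subset \mathbb R^{n+1}$ with $p = (\cos(D/2), -\sin(D/2), 0, \ldots, 0)$ and $\Sigma = \{x_1 = 0\}$, the identity $\sin s(z) = z_1$ together with $z_0 \le \sqrt{1 - z_1^2} < \cos(D/2)$ gives
\[
\cos d(z, p) \;=\; z_0 \cos(D/2) \,-\, z_1 \sin(D/2) \;<\; \cos^2(D/2) \,-\, \sin^2(D/2) \;=\; \cos D,
\]
so $d(z, p) > D$, contradicting $\diam \Omega = D$. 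A symmetric argument rules out $s(z) < -D/2$; the Euclidean and hyperbolic cases are analogous (using Pythagoras, respectively the hyperbolic law of cosines in $\mathbb H^n \subset \mathbb R^{n+1}$ with the Lorentz form).

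For the comparison, extend $\bar\phi_1$ from $[-D/2, D/2]$ to the slab $S$ as a function depending only on $s$. By the warped-product Laplacian formula from \S2, this extension satisfies $\Delta \bar\phi_1 = \bar\phi_1'' - (n-1)\tn_K(s)\bar\phi_1' = -\bar\lambda_1 \bar\phi_1$ on $S$ and is strictly positive in the interior of $S$. For any $\phi \in C_c^\infty(\Omega)$, I would write $\phi = \bar\phi_1 w$ with $w = \phi/\bar\phi_1$, expand $|\nabla\phi|^2$, and integrate by parts once using $2\bar\phi_1 w\,\nabla w \cdot \nabla\bar\phi_1 = \nabla(\bar\phi_1 w^2)\cdot \nabla\bar\phi_1 - w^2|\nabla\bar\phi_1|^2$, obtaining the standard Barta identity
\[
\int_\Omega |\nabla\phi|^2 \;=\; \int_\Omega \bar\phi_1^2 |\nabla w|^2 \,+\, \bar\lambda_1 \int_\Omega \phi^2 \;\ge\; \bar\lambda_1 \int_\Omega \phi^2.
\]
By density this extends to all $\phi \in H_0^1(\Omega)$, and the Rayleigh-quotient characterization of $\lambda_1(\Omega)$ yields $\lambda_1(\Omega) \ge \bar\lambda_1(n, K, D)$. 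A minor approximation by slightly shrunk strictly convex subdomains handles the case where $\bar\Omega$ meets $\partial S$, which can only occur at the two diameter-realizing points $p, q$.

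The main technical point is the slab containment in positive curvature; once that is established, the Barta comparison is routine and the extension $\bar\phi_1(s)$ is an exact eigenfunction on the slab thanks to the warped-product structure.
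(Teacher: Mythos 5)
Your proposal is correct and follows essentially the same route as the paper: enclose $\Omega$ in the width-$D$ slab about a totally geodesic hypersurface, observe that $\bar\phi_1(s)$ is an exact positive eigenfunction of the slab with eigenvalue $\bar\lambda_1$ by the warped-product structure, and compare. The only difference is that you spell out the two steps the paper leaves implicit -- the containment $\Omega\subset\{|s|\le D/2\}$ and the Barta/Rayleigh-quotient identity in place of invoking domain monotonicity on the (non-compact) strip -- which is a legitimate and, if anything, more careful rendering of the same argument.
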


\begin{proof} Let $\Sigma \subset \mathbb M_K^n$ be a totally geodesic hypersurface, and   $B =  [-\tfrac{D}{2},\tfrac{D}{2}]   \times  \Sigma$,  the ``infinite strip" with the metric (\ref{metric}). Recall $ \bar\phi_1(s)$  is the first eigenfunction of the ``1-dimensional" model  with Dirichlet boundary condition. Define a function on $B$ by  $v_1(s, z) = \bar\phi_1(s)$, then
 $v_1$  satisfies  the Laplace eigenvalue equation on $B$ with Dirichlet boundary condition.  As $\bar\phi_1 > 0$ we see that $v_1$ is the first eigenfunction with eigenvalue $\bar\lambda_1$.
Since diameter  of $\Omega$ is $D$,  we have that $\Omega \subset B$.   By domain monotonicity  $\lambda_1 (\Omega)  \geq \bar\lambda_1 (n, K,D)$.\end{proof}

Now we are ready to prove Theorem~\ref{log-con}.
\begin{proof}[Proof of Theorem~\ref{log-con}]
Given any $D <  \frac{\pi}{2\sqrt{K}}$ when $K>0$,
 let $\psi (s)  =  \left( \log \bar{\phi}_1 \right)'$,   where $\bar{\phi}_1$ is a even positive first eigenfunction satisfying (\ref{onedimmodel}) with Dirichlet boundary condition on $[-\frac{D'}{2}, \frac{D'}{2}]$, with $\frac{\pi}{2\sqrt{K}} \ge D' > D$.  Then $\psi (\tfrac{d(x,y)}{2})$ is uniformly continuous on $\overline{\Omega} \times \overline{\Omega}$. By Lemma \ref{band} $\lambda_1 \ge \bar{\lambda}_1$ and by  (\ref{f''}) $\psi$ satisfies (\ref{e-psi-cond}).
By (\ref{f'}), $$2\psi' - 4 \tn_K(s) \psi-(n-1)(K-\tn_K^2(s)) = -2\bar{\lambda}_1 - 2\psi^2 + 2(n-3) \tn_K(s) \psi -(n-1)(K-\tn_K^2(s)).$$
Since $s = \tfrac D2 \le \frac{\pi}{4\sqrt{K}}$, we have $K-\tn_K^2(s) \ge 0$. Also $\psi \le 0$ by  Lemma \ref{phi_1'<0}. Hence when $n \ge 3$, we have $2\psi' - 4 \tn_K(s) \psi -(n-1)(K-\tn_K^2(s))\leq 0$.

 When $n=2$,
\begin{align*}
\begin{split}
\psi'(s)-2\tn_K(s)\psi(s)
&=-\bar{\lambda}_1 - \left(\psi + \tfrac 12 \tn_K\right)^2 +\tfrac{1}{4}\tn^2_K(s) \\
&\leq- \bar{\lambda}_1 +\tfrac{1}{4}\tn^2_K(\tfrac{\pi}{4\sqrt{K}}) =- \bar{\lambda}_1 +\tfrac{K}{4} .
\end{split}
\end{align*}
By (\ref{lambda_1-bar-lowerbound}), for $n=2$, since $D <\tfrac{\pi}{2\sqrt{K}}$, we have  $\bar{\lambda}_1\geq 4K - \tfrac 12 K = \tfrac 72 K$.
Hence, $\psi'-2\tn_K\psi\leq 0$.
All the conditions for $\psi$ in Theorem~\ref{e-log-con-est} are satisfied.  Hence (\ref{e-log-est}) holds.  Let $D' \rightarrow D$ and $K=1$ finish the proof.
\end{proof}

\section{Gap Comparison}

In this section we use the log-concavity estimates (\ref{log-phi1})
to prove the gap estimate  (\ref{gap-est}) in Theorems~\ref{main1}.  To show this we prove the following general gap comparison.
\begin{theorem}  \label{gap-comp}
Let $\Omega$ be a bounded convex domain with diameter $D$ in a Riemannian manifold $M^n$ with $\Ric_M \ge (n-1)K$,  $\phi_1$ a positive first eigenfunction of the Laplacian on $\Omega$ with Dirichlet boundary condition. Assume $\phi_1$ satisfies the log-concavity estimates
\begin{equation}  \label{log-phi-K}
\langle \nabla \log \phi_1 (y), \gamma'(\tfrac{d}{2}) \rangle - \langle \nabla\log \phi_1(x),\gamma'(-\tfrac{d}{2})\rangle \leq 2\left( \log \bar{\phi}_1 \right)' \left(\frac{d(x,y)}{2}\right),
\end{equation}
where $\gamma$ is the unit normal minimizing geodesic with $\gamma(-\tfrac{d}{2}) =x$ and $\gamma(\tfrac{d}{2}) = y$, and $\bar{\phi}_1 >0$ is a first eigenfunction of the operator $\frac{d^2}{ds^2}-(n-1) \tn_K(s) \frac{d}{ds}$ on $[-\frac D2, \frac D2]$ with Dirichlet boundary condition with $d= d(x,y)$.
Then we have the gap comparison
\begin{equation}
\lambda_2- \lambda_1 \ge  \bar{\lambda}_2(n,D,K) -\bar{\lambda}_1 (n,D,K).
\end{equation}
\end{theorem}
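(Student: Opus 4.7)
The plan is to follow the Andrews--Clutterbuck two-point maximum principle, with (\ref{log-phi-K}) supplying the crucial drift comparison. Set $v:=\phi_2/\phi_1$, which extends smoothly to $\overline\Omega$ by Hopf's lemma and satisfies the drift-diffusion equation
\begin{equation*}
\Delta v + 2\langle\nabla \log\phi_1,\nabla v\rangle = -\mu\,v,\qquad \mu:=\lambda_2-\lambda_1,
\end{equation*}
while $\bar w := \bar\phi_2/\bar\phi_1$ on $[-D/2,D/2]$ satisfies the 1-dim analogue with eigenvalue $\bar\mu := \bar\lambda_2 - \bar\lambda_1$ and is odd, strictly increasing on $[0,D/2]$ with $\bar w'(\pm D/2)=0$ by Lemma~\ref{g'>0}. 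I linearize by setting $u_i(x,t) := e^{-\lambda_i t}\phi_i(x)$ so that $V := u_2/u_1 = e^{-\mu t} v$ solves $\partial_t V = \Delta V + 2\langle\nabla\log\phi_1,\nabla V\rangle$, with $\bar V(s,t) := e^{-\bar\mu t}\bar w(s)$ solving the 1-dim version.

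The goal is to propagate the modulus inequality
\begin{equation*}
V(y,t)-V(x,t)\ \le\ 2\alpha\,\bar V\!\bigl(\tfrac{d(x,y)}{2},t\bigr),\qquad (x,y,t)\in \overline\Omega\times\overline\Omega\times[0,\infty),
\end{equation*}
for a scaling $\alpha>0$ chosen so the inequality is strict at $t=0$; such $\alpha$ exists because $v$ is bounded on $\overline\Omega$ while $\bar w$ is strictly positive on $(0,D/2]$ with $\bar w'(0)>0$. Supposing preservation fails, as in Theorem~\ref{log-con-preserve} subtract a perturbation $\varepsilon e^{Ct}$ and locate a first breakdown point $(x_0,y_0,t_0)$ with $x_0\ne y_0$ in the interior of $\Omega\times\Omega$; boundary exclusion (both $\partial\Omega$ and the diagonal) follows from a routine adaptation of Lemma~\ref{boundary-cover-lemma} using the boundedness of $\nabla V$ on $\overline\Omega$. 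At the critical point, apply the Jacobi-field first- and second-variation calculus (in the directions $E_i=e_i\oplus e_i$ and $E_n=e_n\oplus(-e_n)$) developed in Theorem~\ref{log-con-preserve}, but now with $V$ in place of $\log u$. The first variations force
\begin{equation*}
\langle\nabla V(y_0),\gamma'(\tfrac{d_0}{2})\rangle = \langle\nabla V(x_0),\gamma'(-\tfrac{d_0}{2})\rangle = \alpha\bar V'(\tfrac{d_0}{2}),
\end{equation*}
a quantity nonnegative by Lemma~\ref{g'>0}.

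Summing the second variations, combining with $\partial_t Z_\varepsilon\ge 0$, and invoking the Bochner formula together with the Ricci lower bound $\Ric_M \ge (n-1)K$ (used in place of the constant sectional curvature identities in Theorem~\ref{log-con-preserve}), every term cancels against the 1-dim equation for $\bar V$ except a residual drift difference proportional to
\begin{equation*}
\bar V'(\tfrac{d_0}{2})\!\left[\langle\nabla\log\phi_1(y_0),\gamma'(\tfrac{d_0}{2})\rangle - \langle\nabla\log\phi_1(x_0),\gamma'(-\tfrac{d_0}{2})\rangle - 2(\log\bar\phi_1)'(\tfrac{d_0}{2})\right]\!,
\end{equation*}
which is nonpositive precisely by the log-concavity hypothesis (\ref{log-phi-K}). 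Choosing $C$ large enough so the $\varepsilon e^{Ct_0}$ terms have the right sign yields a strict contradiction. Letting $\varepsilon\to 0$ gives the preserved modulus; evaluating at $x,y\in\overline\Omega$ realizing $\mathrm{osc}\,v$ (so $d(x,y)\le D$ and $\bar w(d/2)\le \bar w(D/2)$) produces
\begin{equation*}
\mathrm{osc}_\Omega(v) \le 2\alpha\,\bar w(D/2)\,e^{(\mu-\bar\mu)t}\qquad\text{for all } t\ge 0.
\end{equation*}
If $\mu<\bar\mu$ the right side tends to $0$ as $t\to\infty$, forcing $v$ to be constant and contradicting the sign change of $\phi_2$; hence $\mu\ge \bar\mu$, which is the desired comparison. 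The principal obstacle will be the second-variation bookkeeping at $(x_0,y_0,t_0)$: one must verify that every curvature-dependent cross term absorbs cleanly into the $-(n-1)\tn_K(s)\bar V'$ drift of the 1-dim model using only $\Ric_M\ge (n-1)K$ (rather than full constant curvature), so that the residual reduces precisely to the bracket on which (\ref{log-phi-K}) bites. An elliptic analogue is available as well, obtained by replacing the parabolic quantity $Z_\varepsilon$ with its stationary counterpart and running the same variation argument to compare $\mu$ with $\bar\mu$ directly.
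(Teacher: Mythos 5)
Your outline reproduces the paper's parabolic proof of Theorem~\ref{gap-comp}: the two-point function for $V=e^{-\mu t}\phi_2/\phi_1$ against the model $e^{-\bar\mu t}\bar w$, a first breakdown time, first and second variations at the critical pair with the residual drift killed by \eqref{log-phi-K}, and the $t\to\infty$ oscillation-decay endgame. But the one step you defer as "the principal obstacle" is precisely the step that needs proof, and the tool you name for it is the wrong one. Since $Z_\eps$ here involves $V$ itself rather than $\nabla V$ (unlike Theorem~\ref{log-con-preserve}), no third derivatives of $V$ arise and the Bochner formula never enters. What is needed is the two-point Laplacian comparison $\sum_{i=1}^{n-1}\nabla^2_{E_i,E_i}\,d\le -2(n-1)\tn_K(\tfrac{d}{2})$ in the barrier sense (Theorem~\ref{secondderivfordistfunc}, Corollary~\ref{two-lap2}); it holds under $\Ric_M\ge (n-1)K$ alone because the index-form upper bound with the test fields $\tfrac{\cs_K(s)}{\cs_K(d/2)}e_i$, summed over $i=1,\dots,n-1$, produces exactly $\Ric(e_n,e_n)$. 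Combined with $\bar w'\ge 0$ (Lemma~\ref{g'>0}) this gives $\sum_i\nabla^2_{E_i,E_i}\bar w(\tfrac{d}{2})\le -(n-1)\tn_K(\tfrac{d}{2})\bar w'$, which is what cancels against the corresponding term of \eqref{w''}. Without this lemma your "everything cancels" claim is unverified, and there are no Jacobi-field identities available in a general Ricci-bounded manifold to replace it.

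There is a second gap at the boundary. The exclusion of $\partial\Omega$ does not come from adapting Lemma~\ref{boundary-cover-lemma} (which concerns $\log u$ for a function vanishing on $\partial\Omega$) but from the Neumann condition $\nabla_\nu(\phi_2/\phi_1)=0$ plus strict convexity, giving $\nabla_{\nu_y}Z_\eps=-\alpha\bar V'(\tfrac{d_0}{2})\,\nabla_{\nu_y}d$, which is negative only if $\bar V'(\tfrac{d_0}{2})>0$ \emph{strictly}. Since $\bar w'(\tfrac{D}{2})=0$, this degenerates exactly when the critical pair realizes the diameter with an endpoint on $\partial\Omega$. The paper repairs this by comparing against the model on a slightly larger interval $[-\tfrac{D_\eps}{2},\tfrac{D_\eps}{2}]$, using the Riccati equation \eqref{f'} to check that $(\log\bar\phi_1^{D_\eps})'\ge(\log\bar\phi_1)'$ (so \eqref{log-phi-K} still provides a modulus of contraction for the drift and the new $\varphi'$ is strictly positive on $[0,\tfrac{D}{2}]$), and then letting $D_\eps\to D$. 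With these two repairs your argument closes; the paper also records a purely elliptic alternative via the quotient $Q(x,y)=(w(x)-w(y))/\bar w(\tfrac{d}{2})$ extended to the unit sphere bundle, which you mention but do not develop.
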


With (\ref{log-phi1}), this theorem gives (\ref{gap-est}).

We give two proofs of this theorem. First we give an elliptic  proof as in \cite{ni}, which is a combination of  \cite{singerwongyauyau, andrewsclutterbuckgap}, see also \cite{Zhang-wang}. Then we give a parabolic proof as in \cite{andrewsclutterbuckgap}. In those two papers \cite{andrewsclutterbuckgap, ni}, the result is proven for domains in $\mathbb R^n$.

We will need the following  ``Laplacian comparison" for two points distance function \cite[Theorem 3]{andrewsclutterbuck}, see also \cite[Lemma 7.1]{ni}. This statement should be well known to experts as it follows from the first and second variation formulas of the distance function quickly. One finds its importance in Andrews-Clutterbuck's work \cite{andrewsclutterbuck}.  As the Laplacian comparison for one point distance function is a very important tool. We present the two points version in Corollary~\ref{two-lap2} so it is easy to use. For completeness we give a proof.
\begin{theorem}\label{secondderivfordistfunc}
Let $M^n$ be a Riemannian manifold with $\Ric_M \geq (n-1)K$. Assume that $x ,y  \in M$ with $d(x,y)=d>0$,  and let $\gamma:[-\f{d}{2},\f{d}{2}]
\rightarrow M$ be a unit normal minimizing geodesic from  $x $ to $y $. Let $\{e_i\}$ be an orthonormal basis at $x$ and parallel translate it along $\gamma(s) $ with $ e_n=\gamma'(s) $. Denote $E_i =e_i\oplus e_i, \  i =1, \cdots n-1$.  Then
\begin{equation}
\sum_{i=1}^{n-1} \nabla^2_{E_i, E_i}d|_{(x,y)}\leq-2(n-1)\tn_K\left(\tfrac{d}{2}\right)
\end{equation}
in the barrier sense. Equality holds if and only if $M^n$ has constant sectional curvature $K$.
\end{theorem}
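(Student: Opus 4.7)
The plan is to realize $\nabla^2_{E_i,E_i} d|_{(x,y)}$ as the second derivative (at $r=0$) of the length of a suitable variation of $\gamma$, apply the index form inequality with a carefully chosen test field, and then sum over $i$ and invoke the Ricci bound.

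For each $i\in\{1,\dots,n-1\}$, let $\sigma_1(r)=\exp_x(re_i|_x)$ and $\sigma_2(r)=\exp_y(re_i|_y)$ be the geodesics associated to $E_i=e_i\oplus e_i$, and let $\eta_i(r,s)$ be any smooth variation of $\gamma$ joining $\sigma_1(r)$ to $\sigma_2(r)$ (for instance the minimal geodesic variation). Since $\gamma=\eta_i(0,\cdot)$ is minimizing, the length $L(r):=L(\eta_i(r,\cdot))$ dominates the distance $d_r:=d(\sigma_1(r),\sigma_2(r))$ with equality at $r=0$, so $L(r)-d_r$ attains a minimum at $r=0$; hence $\nabla^2_{E_i,E_i} d|_{(x,y)}\le L''(0)$ in the barrier sense (and the argument applies for any minimizing $\gamma$ if $d$ fails to be smooth). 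By Synge's formula and the index lemma, $L''(0)$ is bounded above by the index form evaluated on \emph{any} vector field $V_i$ along $\gamma$ with $V_i(\pm\tfrac{d}{2})=e_i(\pm\tfrac{d}{2})$, provided the endpoint curves $\sigma_1,\sigma_2$ are geodesics so that the transversal boundary terms $\langle\nabla_r\partial_r\eta_i,\gamma'\rangle|_{-d/2}^{d/2}$ vanish.

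The decisive choice is the one already used in \S 3, namely
\[
V_i(s)=\frac{\cs_K(s)}{\cs_K(\tfrac{d}{2})}\,e_i(s),
\]
which is the Jacobi field along $\gamma$ in the model $\mathbb M^n_K$ with $V_i(\pm\tfrac{d}{2})=e_i$. Using $\cs_K'=-K\sn_K$, we have $|V_i'(s)|^2=K^2\sn_K^2(s)/\cs_K^2(\tfrac{d}{2})$, so summing the index form inequalities for $i=1,\dots,n-1$ and using $\Ric(\gamma',\gamma')\ge(n-1)K$ yields
\[
\sum_{i=1}^{n-1}\nabla^2_{E_i,E_i} d|_{(x,y)}
\;\le\;\frac{1}{\cs_K^2(\tfrac{d}{2})}\int_{-d/2}^{d/2}\!\bigl[(n-1)K^2\sn_K^2(s)-(n-1)K\cs_K^2(s)\bigr]ds.
\]
The integrand equals $-(n-1)K(\sn_K\cs_K)'(s)$, since $(\sn_K\cs_K)'=\cs_K^2-K\sn_K^2$. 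Integrating and using $\tn_K(s)=K\sn_K(s)/\cs_K(s)$ collapses the right-hand side to $-2(n-1)\tn_K(\tfrac{d}{2})$, proving the inequality.

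For the equality statement: equality in the Ricci step forces $\langle R(\gamma',e_i)e_i,\gamma'\rangle=K$ for all $i$ along $\gamma$, i.e.\ the sectional curvatures of the $2$-planes spanned by $\gamma'$ and $e_i$ equal $K$; equality in the index lemma moreover forces $V_i$ to be the honest Jacobi field along $\gamma$ with those endpoint values for every $i$ and every choice of unit normal minimizing geodesic between every pair $x,y$, which propagates constant sectional curvature $K$ throughout $M$. The main subtlety in the argument is really the boundary contribution: one must verify that with geodesic endpoint curves $\sigma_1,\sigma_2$ the transversal boundary terms in the second variation drop out, so that the inequality $L''(0)\le I(V_i,V_i)$ is clean; beyond that the proof is a direct computation in the model.
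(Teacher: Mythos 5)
Your proposal is correct and takes essentially the same route as the paper: the paper also realizes $\nabla^2_{E_i,E_i}d$ as a barrier bounded by the second variation of length of the variation $\eta_i(r,s)=\exp_{\gamma(s)}(rV_i(s))$ with the same test field $V_i(s)=\frac{\cs_K(s)}{\cs_K(d/2)}e_i(s)$, with geodesic transversal curves killing the boundary terms, followed by the same summation, Ricci bound, and integration of $-(n-1)K(\sn_K\cs_K)'$. The only cosmetic difference is that the paper computes $L''(0)$ directly for that explicit variation rather than invoking the index lemma, and handles the cut-locus case with the explicit barrier $d_\epsilon(x,\gamma(\tfrac{d}{2}-\epsilon))$.
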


\begin{proof}
To begin with assume $x,y$ are not cut point of each other, namely $d(x,y)$ is smooth at $(x,y)$.  For each $i\in\{1,\cdots,n-1\}$, let
\begin{equation*}
\eta_i(r,s):= \exp_{\gamma(s)}\left(rV_i(s)\right),
\end{equation*}
with $V_i(s)=\f{\cs_K(s)}{\cs_K(\frac{d}{2})}e_i(s)$.
Since $d\left(\eta_i(r, -\tfrac d2), \eta_i(r, \tfrac d2)\right)\leq L[\eta_i(r,\cdot)]$, where $L[\eta_i(r,\cdot)]$ is the length of variation $\eta_i$, and equality holds for $r=0$, it follows
that
$$
\nabla_{E_i}d|_{(x,y)}=\frac{\dd }{\dd r }L[\eta_i(r,\cdot)]\Big|_{r=0},\ \ \ \nabla^2_{E_i,E_i}d|_{(x,y)}\leq \frac{\dd^2}{\dd r^2}L[\eta_i(r,\cdot)]\Big|_{r=0}.
$$

By the second variation of length  formula \cite[p. 20]{Cheeger-Ebin}
\begin{equation}\label{second derivative of length of geodesic2}
\begin{aligned}
\frac{\dd^2}{\dd r^2}L[\eta_i(r,s)]\Big|_{r=0}
&=\int^{\frac{d}{2}}_{-\frac{d}{2}}\left[ \langle V_i', V_i' \rangle  - \langle R(e_n, V_i)V_i,e_n\rangle \right]ds + \langle e_n, \nabla_{r}\frac{\dd\eta_i}{\dd r} \rangle |_{-d/2}^{d/2} \\
&=\int^{\frac{d}{2}}_{-\f{d}{2}}\left[ \f{ (\cs'_K(s))^2}{\cs_K^2(\frac{d}{2})} - \f{\cs_K^2(s)}{\cs_K^2(\f{d}{2})}\langle R(e_n,e_i)e_i,e_n\rangle
               \right]ds.
\end{aligned}
\end{equation}
Summing this from $i=1$ to $n-1$, we have
\begin{align*}
\sum^{n-1}_{i=1}\nabla^2_{E_i,E_i}d|_{(x,y)}
&\leq\sum^{n-1}_{i=1}\f{\dd^2}{\dd r^2}L[\gamma_i(r,\cdot)]\Big|_{r=0}\\
&= \int^{\frac{d}{2}}_{-\frac{d}{2}}\left[ (n-1) \f{(\cs'_K(s))^2 }{\cs_K^2(\frac{d}{2})} -\frac{\cs_K^2(s)}{\cs_K^2(\frac{d}{2})}\Ric(e_n,e_n)
        \right]ds\\
&\leq \frac{n-1}{\cs_K^2(\frac{d}{2})}\int^{\frac{d}{2}}_{-\frac{d}{2}}\left[ (\cs_K'(s))^2 - K \cs_K^2(s)\right]ds\\
& = \frac{2(n-1)}{\cs_K^2(\frac{d}{2})}\int^{\frac{d}{2}}_{0}\left[ -K \sn_K(s) \cs_K'(s) - K\sn'_K(s)  \cs_K(s)\right]ds\\
& = -  \frac{2(n-1)K}{\cs_K^2(\frac{d}{2})}\int^{\frac{d}{2}}_{0} \left( \sn_K \cs_K\right)' ds   \\
&=-2(n-1)\tn_K\left(\tfrac{d}{2}\right).
\end{align*}

If $x,y$ are cut points of each other,  $d_\epsilon \left(x, \gamma(\tfrac{d}{2} -\epsilon)\right)$ is a barrier for $d(x,y)$ and the estimate holds in barrier sense. See \cite[Section 3]{wei2007} for the definition of barrier and the relation to other weak senses.
\end{proof}

We can apply this to functions which only depends on the distance.
\begin{corollary}  \label{two-lap2}
Let $M^n$ be a Riemannian manifold with $\Ric_M \geq (n-1)K$. Assume that $x ,y  \in M$ with $d(x,y)=d>0$,  and let $\gamma:[-\f{d}{2},\f{d}{2}]
\rightarrow M$ be a unit normal minimizing geodesic from  $x $ to $y $. Let $\{e_i\}$ be an orthonormal basis at $x$ and parallel translate it along $\gamma(s) $ with $ e_n=\gamma'(s) $. Denote $E_i =e_i\oplus e_i, \  i =1, \cdots n-1$.  Then
\begin{align}
\sum_{i=1}^{n-1} \nabla^2_{E_i, E_i} \varphi (d(x,y))  & \leq-   2(n-1)\tn_K\left(\tfrac{d}{2}\right) \, \varphi'  & \mbox{if} \ \varphi' \ge 0   \label{two-points-Lap}\\
\sum_{i=1}^{n-1} \nabla^2_{E_i, E_i} \varphi (d(x,y))  & \ge  -  2(n-1)\tn_K\left(\tfrac{d}{2}\right) \, \varphi'   & \mbox{if} \ \varphi' \le 0
\end{align}
in the barrier sense.
\end{corollary}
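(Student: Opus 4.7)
The approach is to reduce to Theorem~\ref{secondderivfordistfunc} via the chain rule, observing that the first-order terms in the directions $E_i$ vanish by the first variation of arc length. First I would compute, pointwise where $d$ is smooth,
\begin{equation*}
\nabla^2_{E_i,E_i}(\varphi\circ d)\big|_{(x,y)} = \varphi''(d)\,\bigl(\nabla_{E_i}d\bigr)^2 + \varphi'(d)\,\nabla^2_{E_i,E_i}d,
\end{equation*}
by differentiating along the product geodesic $r\mapsto(\exp_x re_i,\exp_y re_i)$, whose tangent at $r=0$ is $E_i=e_i\oplus e_i$. The key observation is that $\nabla_{E_i}d|_{(x,y)}=0$ for $1\le i\le n-1$: by the first variation of arc length, this derivative equals $\langle e_i(\tfrac{d}{2}),\gamma'(\tfrac{d}{2})\rangle-\langle e_i(-\tfrac{d}{2}),\gamma'(-\tfrac{d}{2})\rangle$, which vanishes because $e_i$ is parallel along $\gamma$ and orthogonal to $e_n=\gamma'$. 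Summing over $i=1,\dots,n-1$, the $\varphi''$ terms disappear and
\begin{equation*}
\sum_{i=1}^{n-1}\nabla^2_{E_i,E_i}(\varphi\circ d) = \varphi'(d)\sum_{i=1}^{n-1}\nabla^2_{E_i,E_i}d.
\end{equation*}

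Next I would invoke Theorem~\ref{secondderivfordistfunc}: $\sum_{i=1}^{n-1}\nabla^2_{E_i,E_i}d \le -2(n-1)\tn_K(\tfrac{d}{2})$. Multiplying by $\varphi'(d)$ preserves the inequality when $\varphi'(d)\ge 0$, yielding the first claim, and reverses it when $\varphi'(d)\le 0$, yielding the second.

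For the barrier sense, I would argue by composition. Recall a function $\bar d$ is an upper barrier for $d$ at $(x,y)$ if $\bar d$ is smooth near $(x,y)$, $\bar d(x,y)=d(x,y)$, and $\bar d\ge d$ locally. When $\varphi$ is nondecreasing, $\varphi\circ\bar d$ is then an upper barrier for $\varphi\circ d$, and the pointwise calculation above applies to $\bar d$ (which is smooth) to give the stated inequality; the case $\varphi'\le 0$ uses lower barriers in the same way, with the inequality reversed. Since the paper's use of Theorem~\ref{secondderivfordistfunc} at cut points is precisely via the family of approximating barriers $d_\epsilon(x,\gamma(\tfrac{d}{2}-\epsilon))$ from its proof, no new work is required here beyond noting that the monotone function $\varphi$ transports barriers.

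The main (and essentially only) subtlety is the sign convention in the barrier sense together with the monotonicity hypothesis on $\varphi'$; provided one keeps track of which direction of barrier corresponds to the direction of the inequality, there is no genuine obstacle, and this is exactly why the hypothesis is stated as a one-sided condition on $\varphi'$ rather than on $\varphi$.
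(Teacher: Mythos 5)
Your proof is correct and is exactly the argument the paper leaves implicit: the corollary follows from Theorem~\ref{secondderivfordistfunc} by the chain rule once one notes that $\nabla_{E_i}d=0$ for $i\le n-1$ (first variation, $e_i\perp\gamma'$), so the $\varphi''$ terms drop out and only the sign of $\varphi'$ determines the direction of the inequality. The one point worth making explicit in the barrier case is that the broken-geodesic barrier $d(x,z_\epsilon)+d(z_\epsilon,y)$ with $z_\epsilon=\gamma(\tfrac{d}{2}-\epsilon)$ also has vanishing first derivative in the directions $E_i$ (its gradients at the contact point are $-\gamma'(-\tfrac d2)$ and $\gamma'(\tfrac d2)$ at the two endpoints, both orthogonal to $e_i$), so the indefinite-sign $\varphi''$ term is still absent for the barrier; this is immediate, but it is what your phrase ``the pointwise calculation applies to $\bar d$'' is actually relying on.
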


\begin{proof}[Proof of Theorem~\ref{gap-comp} (elliptic proof)]
Let  $\displaystyle w(x) = \frac{\phi_2(x)}{\phi_1(x)}$ and $\displaystyle \bar{w}(s) = \frac{\bar{\phi}_2(s)}{\bar{\phi}_1(s)}$, where $\phi_i$ are the first and second eigenfunctions of the Laplacian on $\Omega$ with Dirichlet boundary condition,  and $\bar{\phi}_i$ are first and second eigenfunctions of the 1-dim model:
\begin{equation*}
\begin{cases}
\bar{\phi}_i''-(n-1)\tn_K(s)\bar{\phi}_i'+\bar{\lambda}_i\bar{\phi}_i = 0\\
\bar{\phi}_i(\pm D/2) = 0
\end{cases}
\end{equation*}
specified as in (\ref{phi12}). Hence $\bar{w}(0) =0$ and $\bar{w}$ is positive on $(0, D/2)$. By direct computation,
\begin{align}
\nabla w &  = \frac{\nabla \phi_2}{\phi_1} - w\,  \nabla \log \phi_1,   \nonumber \\
\Delta w & = -(\lambda_2 - \lambda_1)w - 2\langle\nabla\log \phi_1,\nabla w\rangle,   \label{laplacequotient} \\
\bar{w}' &= \frac{\bar{\phi}_2'}{\bar{\phi}_1} - \frac{\bar{\phi}_2\bar{\phi}_1'}{\bar{\phi}_1^2},\\
\bar{w}'' -  (n-1)\tn_K(s)\bar{w}'
&=-(\bar{\lambda}_2 - \bar{\lambda}_1)\bar{w}  - 2(\log\bar{\phi}_1)'\bar{w}'.  \label{w-bar''}
\end{align}
We can extend $w$ to a smooth function on $\overline{\Omega}$ with Neumann condition $\frac{\dd w}{\dd \nu} = 0$ on $\dd\Omega$ \cite{singerwongyauyau}, same for $\bar{w}$.

Consider the quotient of the oscillations of $w$ and $\bar{w}(s)$ and let
\begin{equation*}
Q(x,y) = \frac{w(x) - w(y)}{\bar{w}\left(\frac{d(x,y)}{2}\right)}
\end{equation*}
on $\overline{\Omega} \times \overline{\Omega} \setminus \Delta$, where $\Delta = \{(x,x)| x \in \overline{\Omega} \}$ is the diagonal.  Since
\begin{align*}
\lim_{y \to x}Q(x, y)  = 2\frac{\langle\nabla w(x), X \rangle }{\bar{w}'(0)},
\end{align*}
where $X = \gamma'(0)$ and $\gamma$ is the unique normal minimal geodesic connecting $x$ to $y$,
we can extend the function $Q$ to the unit sphere bundle $U\Omega = \{(x,X) \ | \ x \in \bar{\Omega}, \|X\|=1\}$  as
\begin{equation*}
Q(x,X) = \frac{2\langle \nabla w(x),X\rangle}{\bar{w}'(0)}.
\end{equation*}
The maximum of $Q$ then is achieved.

Case 1:  the maximum of $Q$ is achieved at $(x_0, y_0)$ with $x_0 \neq y_0$.  Denote $d_0 = d(x_0,y_0)>0$,   $m = Q(x_0,y_0)>0$. At $(x_0, y_0)$,  we have  $\nabla Q = 0, \ \nabla^2 Q \le 0$. The Neumann condition $\frac{\dd w}{\dd \nu} = 0$ and strict convexity of $\Omega$ forces that both $x_0$ and $y_0$ must be in $\Omega$.
Indeed, if $x_0 \in \dd\Omega$, then taking the derivative in the (out) normal direction at $x_0$, since $\nabla_{\nu} w|_{x_0} =0$,  we have
\begin{align*}
0=\nabla_{\nu} Q(x,y_0)|_{x_0} = -2m\frac{\bar{w}'(\frac{d(x_0,y_0)}{2})}{\bar{w}(\frac{d(x_0,y_0)}{2})}\nabla_{\nu}d(x,y_0)|_{x=x_0}.
\end{align*}
Now $m, \ \bar{w}$ are positive, by Lemma~\ref{g'>0}, $\bar{w}' >0$, and since $\Omega$ is strictly convex, $\nabla_{\nu}d(x,y_0)|_{x=x_0} >0$.  This is a contradiction.

   Let $\gamma$ be the normal  minimal geodesic such that $\gamma(-d_0/2) = x_0$ and $\gamma(d_0/2) = y_0$.   Let $e_n := \gamma'$ and extend to an orthonormal basis $\{e_i\}$ by parallel translation along $\gamma$.  Denote $E_i = e_i \oplus e_i, \  i= 1, \cdots, n;  \ E_n = e_n \oplus (-e_n)$.

For $E \in T_xM\oplus T_yM$,
\begin{equation}
 \nabla_E Q  = \frac{\nabla_E w(x) - \nabla_E w(y)}{\bar{w}} - \frac{(w(x) - w(y))}{\bar{w}^2}(\nabla_E\bar{w}), \label{Q'}
 \end{equation}
 and
 \begin{align*}
 \nabla^2_{E,E} Q &= \frac{\nabla^2_{E,E} w(x) - \nabla^2_{E,E} w(y)}{\bar{w}}-2\frac{\nabla_E w(x) - \nabla_E w(y)}{\bar{w}^2}  \nabla_E \bar{w} \\
 &\hspace{0.2 in} +2\frac{w(x)-w(y)}{\bar{w}^3}\nabla_E\bar{w}\,  \nabla_E\bar{w}  - \frac{w(x) - w(y)}{\bar{w}^2}\nabla^2_{E,E}\bar{w}
 \end{align*}
 \begin{equation} = \frac{\nabla^2_{E,E} w(x) - \nabla^2_{E,E} w(y)}{\bar{w}} -\frac{2}{\bar{w}}  (\nabla_E Q) (\nabla_E\bar{w}) -\frac{Q}{\bar{w}} \nabla^2_{E,E}\bar{w}.  \label{Q''1}
\end{equation}

Hence at $(x_0, y_0)$, we have
\begin{align}
0 & = \frac{\nabla_E w(x_0) - \nabla_E w(y_0)}{\bar{w}} - \frac{m}{\bar{w}}(\nabla_E\bar{w}), \label{firstderiv}  \\
0 &\ge \frac{\nabla^2_{E,E} w(x_0) - \nabla^2_{E,E} w(y_0)}{\bar{w}}- \frac{m}{\bar{w}}\nabla^2_{E,E}\bar{w}.  \label{Q''}
\end{align}
  We apply these to various directions. From $\nabla_{0\oplus e_i} Q = \nabla_{ e_i \oplus 0} Q =0, \ i =1, \cdots n$ we have
\[   \nabla_{e_i} \omega (y_0) =\nabla_{e_i} \omega (x_0) =0, \   i =1, \cdots n-1,  \ \ \nabla_{e_n} \omega (y_0) = \nabla_{e_n} \omega (x_0) =  -\frac{m}{2} \bar{w}'(d_0/2).  \]
  Hence
\begin{equation}  \label{nabla-w}
\nabla \omega (y_0) = \nabla  \omega (x_0) =  -\frac{m}{2} \bar{w}'(d_0/2) e_n.  \end{equation}
Adding up  $\nabla^2_{E_i, E_i} Q \le 0, \ i =1, \cdots n$,  using  (\ref{Q''}),  we have
\begin{equation}  \label{lap-w}
0 \geq \frac{\Delta w(x_0) - \Delta w(y_0)}{\bar{w}} - \frac{m}{\bar{w}} \sum_{i=1}^n  \nabla^2_{E_i, E_i}  \bar{w}.
\end{equation}
By Lemma~\ref{g'>0}, $ \bar{w}' \ge 0$, hence, by (\ref{two-points-Lap}),  $ \sum_{i=1}^{n-1}  \nabla^2_{E_i, E_i}  \bar{w}  \le -(n-1) \tn_K \bar{w}'$.   As before $ \nabla^2_{E_n, E_n}  \bar{w} = \bar{w}''$.  As $m, \bar{w}$ are positive, plug these and  \eqref{laplacequotient} to (\ref{lap-w}),  and using (\ref{nabla-w}), (\ref{w-bar''}),
we have
\begin{align*}
0 &\geq -(\lambda_2 - \lambda_1)m+ 2\frac{\langle\nabla\log \phi_1,\nabla w(y_0)\rangle - \langle\nabla\log\phi_1,\nabla w(x_0)\rangle}{\bar{w}} - m\frac{\bar{w}'' - (n-1)\tn_K\bar{w}'}{\bar{w}}\\
&= -(\lambda_2 - \lambda_1)m - m\bar{w}'\frac{(\langle\nabla\log \phi_1(y_0),e_n\rangle - \langle\nabla\log\phi_1(x_0),e_n\rangle) - 2(\log\bar{\phi}_1)'}{\bar{w}} +(\bar{\lambda}_2-\bar{\lambda}_1)m .
\end{align*}
Using  (\ref{log-phi1}), we have $0 \ge  -(\lambda_2 - \lambda_1)m + (\bar{\lambda}_2-\bar{\lambda}_1)m$, which is (\ref{gap-est}).

Case 2:  the maximum of $Q$ is attained at some $(x_0, X_0) \in U\Omega$.
  By Cauchy-Schwarz inequality, the corresponding maximal direction is $X_0 = \frac{\nabla w}{\|\nabla w\|}$ so that the maximum value is $m = \frac{2\|\nabla w\|}{\bar{w}'(0)}$.  Furthermore, $\|\nabla w(x_0)\| \geq \|\nabla w(x)\|$ for any $x \in \bar{\Omega}$.  Suppose $x_0 \in \dd\Omega$, then by (strict) convexity,
\begin{align*}
0= \nabla_{\nu} \|\nabla w\|^2 |_{x_0} = -II(\nabla w,\nabla w)|_{x_0} < 0,
\end{align*}
a contradiction. Hence $x_0 \in \Omega$.  Now let $e_n := \frac{\nabla w}{\|\nabla w\|}$ and complete to an orthonormal frame $\{e_i\}$ at $x_0$.  We further parallel translate to a neighborhood of $x_0$.  In such a frame we have
\begin{equation*}
\nabla_n w = \langle \nabla w, e_n\rangle = \|\nabla w\|
\end{equation*}
and
\begin{equation*}
\nabla_i w = \langle \nabla w, e_i\rangle = 0, \quad i=1,\ldots, n-1.
\end{equation*}
At the maximal point $x_0$, we have the first derivative vanishing
\begin{equation*}
0=\nabla\|\nabla w\|^2 = 2\langle \nabla\nabla w, \nabla w\rangle = 2\|\nabla w\|\nabla_n \nabla w,
\end{equation*}
and the second derivative non-positive
\begin{align*}
0 &\geq \nabla_k\nabla_k \|\nabla w\|^2
=2\left(\langle \nabla_k\nabla_k\nabla w,\nabla w\rangle + \|\nabla_k\nabla w\|^2\right)\\
&\geq 2\langle \nabla_k\nabla_k\nabla w,\nabla w\rangle  =2\|\nabla w\| \langle \nabla_k\nabla_k\nabla w,e_n\rangle.
\end{align*}
In short
\begin{equation}\label{gapestsecondvar}
0 \geq \langle \nabla_k\nabla_k \nabla w, e_n\rangle, \quad k=1,\ldots n-1.
\end{equation}
Now let
\begin{align*}
&x(s) := \exp_{x_0}(s\, e_n)\\
&y(s) := \exp_{x_0}(-s\, e_n)\\
&g(s) := Q(x(s),y(s)).
\end{align*}
By construction, since the variations are approaching $x_0$ in the $e_n$ direction, we have
\begin{equation*}
m = Q(x_0,e_n(x_0))= g(0) \geq g(s), \quad \text{ for all } s\in (-\eps,\eps).
\end{equation*}
and so $\lim_{s\to 0} g'(s) = 0$ and $\lim_{s\to 0} g''(s) \leq 0$.  By (\ref{Q'}), (\ref{Q''1})
\begin{equation*}
g'(s)=\frac{\langle \nabla w,x'(s)\rangle - \langle \nabla w,y'(s)\rangle}{\bar{w}(s)} - \frac{g(s)}{\bar{w}(s)} \bar{w}',
\end{equation*}
and
\begin{align}
\begin{split}
g''(s) &=
 \frac{\langle \nabla_s\nabla w(x(s)),x'(s)\rangle + \langle \nabla w(x(s)),x''(s)\rangle - \langle \nabla_s\nabla w(y),y'(s)\rangle - \langle \nabla w,y''(s)\rangle}{\bar{w}}  \\
&\hspace{0.2 in}-2g'(s)\frac{\bar{w}'}{\bar{w}}-g(s)\left(\frac{\bar{w}''}{\bar{w}}\right).  \label{g''}
\end{split}
\end{align}
From (\ref{w-bar''})
\begin{equation*}
\frac{\bar{w}''}{\bar{w}} = (n-1)\tn_K(s)\frac{\bar{w}'}{\bar{w}} - (\bar{\lambda}_2-\bar{\lambda}_1) - 2(\log\bar{\phi}_1)'\frac{\bar{w}'}{\bar{w}}.
\end{equation*}
Using the fact that $\bar{\phi}_2(0) = 0, \ \bar{\phi}_2'(0) \not= 0$ and $\bar{\phi}_1'(0) = 0, \bar{\phi}_1(0) \not= 0$, when $s\to 0$, we have
\begin{equation*}
\lim_{s\to 0}\frac{\bar{w}''}{\bar{w}} = K(n-1) - (\bar{\lambda}_2-\bar{\lambda}_1) +2\bar{\lambda}_1
\end{equation*}
and
\begin{equation*}
\lim_{s\to 0}g'(s)\frac{\bar{w}'}{\bar{w}} = g''(0).
\end{equation*}
Since
\begin{align*}
x''(s) &= \frac{d}{ds} x'(s) =\nabla_{x'(s)}x'(s) = 0,
\end{align*}
letting $s\to 0$ in (\ref{g''}), we have
\begin{align*}
0 &\geq 2\frac{\langle \nabla_n\nabla_n \nabla w,e_n\rangle}{\tilde{w}'(0)} + m[(\bar{\lambda}_2-\bar{\lambda}_1)-K(n-1)-2\bar{\lambda}_1].
\end{align*}
Combining this with \eqref{gapestsecondvar}, we have
\begin{equation*}
0 \geq 2\frac{\langle \Delta(\nabla w),e_n \rangle}{\bar{w}'(0)} + m[(\bar{\lambda}_2-\bar{\lambda}_1)-K(n-1)-2\bar{\lambda}_1].
\end{equation*}
Use the Bochner-Weitzenb\"ock  formula (\ref{Bochner}), we have
\begin{equation*}
0 \geq 2\frac{\langle \nabla (\Delta w),e_n \rangle +\Ric(\nabla w, e_n)}{\bar{w}'(0)} + m[(\bar{\lambda}_2-\bar{\lambda}_1)-K(n-1)-2\bar{\lambda}_1].
\end{equation*}
Inserting in \eqref{laplacequotient}, we have
\begin{align*}
0 &\geq 2\frac{\langle \nabla (-(\lambda_2 - \lambda_1)w - 2\langle\nabla\log \phi_1,\nabla w\rangle ), e_n \rangle +\Ric(\nabla w, e_n)}{\bar{w}'(0)} + m[(\bar{\lambda}_2-\bar{\lambda}_1)-K(n-1)-2\bar{\lambda}_1] \\
& = \left[-2(\lambda_2-\lambda_1)- 4\langle \nabla_n \nabla\log \phi_1, e_n\rangle + 2\Ric(e_n,e_n)\right] \frac{\|\nabla w\|}{\bar{w}'(0)} + m[(\bar{\lambda}_2-\bar{\lambda}_1)-K(n-1)-2\bar{\lambda}_1].
\end{align*}
By (\ref{Hessin-phi1})  we have $-\nabla^2\log \phi_1 \geq \bar{\lambda}_1$\,id.
 Since $m=\frac{2\|\nabla w\|}{\bar{w}'(0)}$ and $\Ric \ge (n-1)K$, we get
\begin{equation*}
\lambda_2 - \lambda_1 \geq \bar{\lambda}_2-\bar{\lambda}_1.
\end{equation*}
\end{proof}

For the parabolic proof, we first prove the following theorem which is similar to Theorem 2.1 in \cite{andrewsclutterbuckgap}. Recall
\begin{definition}
A function $\omega$ is a modulus of contraction for vector field $X$ if for every $x \not= y$ in $\Omega$
\[
\langle X(y),\gamma'\rangle -\langle X(x),\gamma'\rangle \le 2\omega \left(\frac{d(x,y)}{2}\right),
\]
 where $\gamma$ is the unit normal minimizing geodesic with $\gamma(-\tfrac{d}{2}) =x$ and $\gamma(\tfrac{d}{2}) = y$, $d = d(x,y)$.
\end{definition}
\begin{theorem}\label{parabolicproof}
Let $\Omega$ be a strictly convex domain of diameter $D$ with smooth boundary in a Riemannian manifold $M^n$ with $\Ric \geq (n-1)K$, and $X$ a time-dependent vector field on $\Omega$.  Suppose $v: \Omega\times\mathbb{R}_+ \to \mathbb{R}$ is a smooth solution of the equation with Neumann boundary condition,
\begin{equation*}
\begin{cases}
\frac{\dd v}{\dd t} = \Delta v + X\cdot\nabla v & \text{ in } \Omega \times \mathbb{R}_+;\\
\nabla_{\nu} v = 0 & \text{ in } \dd\Omega \times \mathbb{R}_+.
\end{cases}
\end{equation*}
Suppose that
\begin{enumerate}
\item $X(\cdot, t)$ has modulus of contraction $\omega(\cdot,t)$ for each $t\geq 0$, where $\omega:[0,D/2] \times \mathbb{R}_+ \to \mathbb{R}$ is smooth.
\item $v(\cdot, 0)$ has modulus of continuity $\varphi_0$, where $\varphi_0:[0,D/2] \to \mathbb{R}$ is smooth with $\varphi_0(0) = 0$ and $\varphi_0'(z) > 0$ for $0\leq z \leq D/2$.
\item $\varphi:[0,D/2] \times \mathbb{R}_+ \to \mathbb{R}$ satisfies
\begin{enumerate}
\item $\varphi(z,0) = \varphi_0(z)$ for each $z\in [0,D/2]$
\item $\frac{\dd\varphi}{\dd t} \geq \varphi''-(n-1)\varphi'\tn_K + \omega\varphi'$  on $[0,D/2]\times \mathbb{R}_+$
\item $\varphi' > 0$ on $[0,D/2] \times \mathbb{R}_+$
\item $\varphi(0,t) \geq 0$ for each $t \geq 0$.
\end{enumerate}
\end{enumerate}
Then $\varphi(\cdot, t)$ is a modulus of continuity for $v(\cdot,t)$ for each $t \geq 0$.
\end{theorem}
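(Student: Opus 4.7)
The plan is to adapt the Andrews--Clutterbuck two-point maximum principle, mirroring the argument in Theorem~\ref{log-con-preserve} but with the drift field $X\cdot\nabla v$ in place of the quadratic gradient term. I would introduce the perturbed two-point function
\[
Z_\epsilon(x,y,t) := v(y,t) - v(x,t) - 2\varphi\!\left(\tfrac{d(x,y)}{2},t\right) - \epsilon e^{Ct}
\]
on $\hat\Omega\times[0,T]$ for fixed $T>0$ and $\epsilon>0$, with $C>0$ to be chosen. Hypothesis (2) gives $Z_\epsilon(\cdot,0)<0$ on $\hat\Omega$, and continuity together with $\varphi(0,t)\ge 0$ ensures $Z_\epsilon\le-\epsilon/2$ on a neighborhood of the diagonal $\{x=y\}$ for small $t$.

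Before arguing at an interior maximum, I would rule out $Z_\epsilon$ touching zero with $x_0$ or $y_0$ on $\partial\Omega$. If $x_0\in\partial\Omega$ and $y_0\neq x_0$, let $\nu$ be the outward unit normal at $x_0$ and $e_n=\gamma'(-d_0/2)$ the initial tangent of the minimizing geodesic to $y_0$. Strict convexity forces $\langle\nu,e_n\rangle<0$, and using $\nabla d(\cdot,y_0)|_{x_0}=-e_n$ together with the Neumann condition $\langle\nabla v(x_0),\nu\rangle=0$ gives
\[
\nabla_{\nu\oplus 0}Z_\epsilon\big|_{(x_0,y_0)} = \varphi'(d_0/2,t)\,\langle\nu,e_n\rangle < 0,
\]
which contradicts the sign an interior-directional maximum forces. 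A symmetric argument handles $y_0\in\partial\Omega$, so any first zero of $Z_\epsilon$ must occur at $(x_0,y_0,t_0)$ with $x_0,y_0$ interior and $x_0\neq y_0$.

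At such a point, fix a parallel orthonormal frame $\{e_i\}$ along $\gamma$ with $e_n=\gamma'$. First-variation vanishing in the directions $e_i\oplus 0$ and $0\oplus e_i$ for $i\le n-1$ gives $\nabla_{e_i}v(x_0)=\nabla_{e_i}v(y_0)=0$ because the transverse first variation of $d$ vanishes, while the tangential first variations force $\langle\nabla v(x_0),e_n\rangle=\langle\nabla v(y_0),e_n\rangle=\varphi'(d_0/2,t_0)$. Hence $\nabla v(x_0)=\nabla v(y_0)=\varphi'(d_0/2)\,e_n$, and the drift difference collapses via the modulus of contraction hypothesis to
\[
X(y_0)\cdot\nabla v(y_0) - X(x_0)\cdot\nabla v(x_0) = \varphi'(d_0/2)\bigl[\langle X(y_0),e_n\rangle-\langle X(x_0),e_n\rangle\bigr] \le 2\omega(d_0/2,t_0)\,\varphi'(d_0/2).
\]

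For the Laplacian difference, I would sum $\nabla^2_{E_i,E_i}Z_\epsilon\le 0$ over $E_i=e_i\oplus e_i$ ($i\le n-1$) and $E_n=e_n\oplus(-e_n)$. Using $\nabla_{E_i}d=0$ for $i\le n-1$, $\nabla_{E_n}d=-2$, and $\nabla^2_{E_n,E_n}d=0$, the $\varphi$ contribution is $2\varphi''(d_0/2)+\varphi'(d_0/2)\sum_{i=1}^{n-1}\nabla^2_{E_i,E_i}d$. Since hypothesis (3)(c) gives $\varphi'>0$, Theorem~\ref{secondderivfordistfunc} applies in the correct direction and yields
\[
\Delta v(y_0)-\Delta v(x_0) \le 2\varphi''(d_0/2) - 2(n-1)\tn_K(d_0/2)\,\varphi'(d_0/2).
\]
Combining this with the drift bound, the equation $\partial_t v=\Delta v+X\cdot\nabla v$, and $\partial_tZ_\epsilon\ge 0$ gives
\[
0 \le 2\bigl[\varphi''-(n-1)\tn_K\varphi'+\omega\varphi'-\partial_t\varphi\bigr](d_0/2,t_0) - C\epsilon e^{Ct_0},
\]
which contradicts hypothesis (3)(b) for every $C>0$ and $\epsilon>0$. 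Sending $\epsilon\to 0$ then gives the conclusion. The principal obstacle is the boundary analysis: the Neumann condition must be combined with strict convexity to block a boundary maximum, playing the role that the Hopf lemma played in Theorem~\ref{log-con-preserve}; the interior computation itself is a careful bookkeeping exercise already built into that earlier argument.
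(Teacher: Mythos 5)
Your proposal is correct and follows essentially the same route as the paper's proof: the same perturbed two-point function, the same Neumann-plus-strict-convexity argument to exclude a boundary maximum, the same first-variation identities giving $\nabla v(x_0)=\nabla v(y_0)=\varphi'e_n$, and the same use of the two-point Laplacian comparison (Theorem~\ref{secondderivfordistfunc}) to control the sum of second variations, leading to the identical contradiction with hypothesis (3)(b). The only cosmetic difference is your factor $e^{Ct}$ in place of the paper's $e^{t}$, which changes nothing.
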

\begin{proof}
For any $\eps \geq 0$, define
\begin{equation*}
Z_\eps(y,x,t) = v(y,t) - v(x,t) - 2\varphi\left(\frac{d(x,y)}{2},t\right) -\eps e^t.
\end{equation*}
By assumption, $Z_\eps (y,x,0) \le -\epsilon$ for every $x \not= y$ in $\Omega$, and $Z_\eps (x,x,t) \le -\eps$ for every $x \in \Omega$ and $t \ge 0$. We will prove for every $\eps >0$, $Z_\eps <0$ on $\Omega \times \Omega \times \mathbb R_+$. If not, then there exists first time $t_0>0$  and $x_0\neq y_0 \in \bar{\Omega}$ such that $Z_{\eps}(x_0,y_0,t_0)=0$.  If $y_0\in \dd\Omega$, then, by the Neumann boundary condition,
\begin{equation*}
\nabla_{\nu_y}Z_{\eps} = -\varphi' \nabla_{\nu_y} d,
\end{equation*}
where $\nu_y$ is the outward unit normal at $y$.   By  strict convexity, we have $\nabla_{\nu_y}d > 0$.  With assumption (c), we have $\nabla_{\nu_y}Z_{\eps} <0$. This implies $Z(x_0,y_s,t_0) > 0$ for $y_s$ near $y_0$ in the normal direction, which is a contradiction to $Z_{\eps}\leq 0$ on $\bar\Omega\times\bar\Omega \times [0,t_0]$.

Assume now that $x_0,y_0$ are interior points of $\Omega$. Let $\gamma(s)$ be the unit normal minimizing geodesic such that $\gamma(-\tfrac{d_0}{2}) = x_0$ and $\gamma(\tfrac{d_0}{2}) = y_0$,  where $d_0 = d(x_0,y_0)$. Choose a local orthonormal frame $\{e_i\}$ at $x_0$ such that $e_n = \gamma'(-\tfrac{d_0}{2})$ and parallel translate them along $\gamma$. Let $E_i = e_i \oplus e_i \in T_{(x_0,y_0)}\Omega \times \Omega$ for $1 \le i \le n-1$, and $E_n = e_n \oplus (-e_n)$. From the vanishing of first variation, we have
\begin{equation}
\nabla v_{y_0} = \nabla v_{x_0} = \varphi' e_n.
\end{equation}
Using maximum principle and (\ref{two-points-Lap}), we obtain
\begin{align*}
\begin{split}
0 &\geq \sum_{i=1}^n \nabla^2_{E_i,E_i}  Z_\eps\Big|_{(x_0,y_0,t_0)}\\
& \ge  \Delta v(y_0,t_0) - \Delta v(x_0,t_0) - 2\varphi'' + 2(n-1)\varphi'\tn_K(d_0/2).
\end{split}
\end{align*}
Therefore combining these,
\begin{align*}
0 \leq \frac{\dd}{\dd t}Z_\eps &= \Delta v(y_0,t_0) - \Delta v(x_0,t_0) + \varphi' (\langle X(y_0),\gamma'\rangle -\langle X(x_0),\gamma'\rangle ) -2\frac{\dd \varphi}{\dd t} - \eps e^{t_0} \\
& \leq 2\varphi '' - 2(n-1)\varphi'\tn_K(d_0/2) + 2\omega \, \varphi' - 2\frac{\dd \varphi}{\dd t} -\eps e^{t_0}<0,
\end{align*}
which is a contradiction.
\end{proof}

With above theorem, we derive another proof of  Theorem~\ref{gap-comp}, the gap comparison.  The proof is a minor modification from the one by Ni in \cite{ni}.

\begin{proof}[Proof of Theorem~\ref{gap-comp} (parabolic proof)]
Let $v(x,t) = e^{-(\lambda_2-\lambda_1)t}\frac{\phi_2}{\phi_1}$ with $(\phi_i,\lambda_i)$ the $i$-th eigenpair of the Laplacian.  From \eqref{laplacequotient}, we can see that $v$ satisfies the required heat equation with $X=\nabla \log \phi_1$. Let $\varphi(s,t) = Ce^{-(\bar{\lambda}_2 -\bar{\lambda}_1)t}\frac{\bar{\phi}_2}{\bar{\phi}_1}$, with $(\bar{\phi}_i,\bar{\lambda}_i)$ the $i$-th eigenpair of the model, and let $\omega(s) = (\log\bar{\phi}_1)'$.  Denote $\frac{d}{ds}$ by $'$.  From \eqref{w''}, we see that $\varphi$ satisfies the required differential inequality.  By assumption, $\omega$ is a modulus of contraction for $\nabla\log \phi_1$.  By Lemma \ref{g'>0}, we know that $\varphi' \geq 0$.  To apply Theorem \ref{parabolicproof}, we want $\varphi'$ strictly positive however at the boundary $\varphi'(\tfrac{D}{2})$ may be zero (which can be seen explicitly when $K=0$). We can overcome this by considering a larger domain $D_\eps > D$.  Let $\bar{\phi}^{D_\eps}_i$ be the corresponding eigenfunctions on $[-\tfrac{D_\eps}{2},\tfrac{D_\eps}{2}]$ and $\omega^{D_\eps} = (\log\bar{\phi}^{D_\eps}_1)'$.  Note that $\bar{\lambda}_1 \geq \bar{\lambda}^{D_\eps}_1$. From (\ref{f'}),  $\omega$ and $\omega^{D_\eps}$ satisfy
\begin{equation*}
\begin{cases}
\omega'  = - \omega^2 +  (n-1)\tn_K(s)\omega  -\bar{\lambda}_1\\
\omega(0) = 0
\end{cases}
\end{equation*}
and
\begin{equation*}
\begin{cases}
(\omega^{D_\eps})' = -  (\omega^{D_\eps})^2 + (n-1)\tn_K(s)\omega^{D_\eps}  -\bar{\lambda}^{D_\eps}_1 \\
\omega^{D_\eps}(0) = 0,
\end{cases}
\end{equation*}
so by ODE comparison, we have $\omega \le \omega^{D_\eps}$.  Hence $\omega^{D_\eps}$ is a modulus of contraction for $X$. Furthermore, the corresponding $\varphi^{D_\eps}$ will be strictly increasing on $[0,\frac{D}{2}]$. Hence applying Theorem  \ref{parabolicproof}, we have
\begin{equation*}
e^{-(\lambda_2-\lambda_1)t}\left(\frac{\phi_2(y)}{\phi_1(y)}-\frac{\phi_2(x)}{\phi_1(x)}\right) \leq Ce^{-(\bar{\lambda}^{D_\eps}_2-\bar{\lambda}^{D_\eps}_1)t}\frac{\bar{\phi}^{D_\eps}_2}{\bar{\phi}^{D_\eps}_1}\biggr|_{d(x,y)/2}.
\end{equation*}
Letting $D^\eps \to D$, we have $\lambda_2-\lambda_1 \geq \bar{\lambda}_2-\bar{\lambda}_1$.
\end{proof}

As another application of (\ref{log-phi1}), we obtain an estimate on the first Dirichlet  eigenvalue of the Laplacian on convex domains in sphere..
\begin{proposition}  \label{lambda_12-lb}
	Let $\Omega$ be a strictly convex domain with diameter $D$ in $\mathbb M^n_K$ with $K \ge 0$,  $D<\frac{\pi}{2\sqrt{K}}$ when $K>0$. Then the first two Dirichlet eigenvalues of the Laplacian on $\Omega$  satisfy
	$$ \lambda_1\geq n \bar{\lambda}_1  \ge \max \left\{ \frac{n\pi^2}{D^2}-\frac{n(n-1)}{2}K, 0\right\},$$
	  $$\lambda_2\geq n \bar{\lambda}_1 + \frac{3\pi^2}{D^2} \ge \max \left\{\left(n+3\right)\frac{\pi^2}{D^2}-\frac{n(n-1)}{2}K,0\right\} \ \ \mbox{when} \ n \ge 3.$$
\end{proposition}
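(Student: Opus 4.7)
The plan is to extract a pointwise bound on $\lambda_1$ from the Hessian log-concavity estimate \eqref{Hessin-phi1}, and then promote this to a bound on $\lambda_2$ by combining with the model-space gap estimate \eqref{gap-est-model}. I would proceed as follows.

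First I would take the trace of \eqref{Hessin-phi1}, which yields
\[
\Delta \log \phi_1 \le -n\,\bar{\lambda}_1(n,D,K)
\]
at every interior point of $\Omega$. On the other hand, since $\Delta \phi_1 = -\lambda_1 \phi_1$, a direct computation gives the Bochner-type identity
\[
\Delta \log \phi_1 \;=\; \frac{\Delta \phi_1}{\phi_1} - \bigl|\nabla \log \phi_1\bigr|^2 \;=\; -\lambda_1 - \bigl|\nabla \log \phi_1\bigr|^2.
\]
Combining the two,
\[
\lambda_1 + \bigl|\nabla \log \phi_1\bigr|^2 \;\ge\; n\,\bar{\lambda}_1.
\]

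Now I would evaluate at a point $x_0 \in \Omega$ where $\phi_1$ attains its maximum; such a point exists in the interior because $\phi_1 > 0$ in $\Omega$ and vanishes on $\partial\Omega$. At $x_0$ we have $\nabla \phi_1(x_0) = 0$, hence $\nabla \log \phi_1(x_0) = 0$, and the inequality collapses to $\lambda_1 \ge n\,\bar{\lambda}_1(n,D,K)$. Invoking the Schr\"odinger-normal-form lower bound \eqref{lambda_1-bar-lowerbound} for $\bar{\lambda}_1$ then gives the explicit estimate
\[
\lambda_1 \;\ge\; n\,\bar{\lambda}_1 \;\ge\; \max\!\left\{\tfrac{n\pi^2}{D^2} - \tfrac{n(n-1)}{2}K,\ 0\right\}.
\]

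For the second eigenvalue, I would simply write $\lambda_2 = \lambda_1 + (\lambda_2 - \lambda_1)$ and apply the gap comparison $\lambda_2 - \lambda_1 \ge \bar{\lambda}_2 - \bar{\lambda}_1$ (Theorem~\ref{gap-comp}, whose hypothesis \eqref{log-phi-K} is exactly \eqref{log-phi1}) together with the model-space gap estimate $\bar{\lambda}_2 - \bar{\lambda}_1 \ge 3\pi^2/D^2$ from Theorem~\ref{gap-mono}, valid for $n \ge 3$. This yields
\[
\lambda_2 \;\ge\; n\,\bar{\lambda}_1 + \tfrac{3\pi^2}{D^2},
\]
and substituting \eqref{lambda_1-bar-lowerbound} once more produces the explicit bound stated. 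There is no real obstacle here: the content is entirely in the already-established estimate \eqref{Hessin-phi1} and in the gap comparison of Theorem~\ref{gap-comp}; this proposition is a short corollary that combines them via the trivial observation $\nabla\log\phi_1 = 0$ at an interior maximum.
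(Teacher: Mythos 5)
Your argument is correct and is essentially the paper's proof: both reduce to the interior maximum point $x_0$ of $\phi_1$, where $\nabla\log\phi_1=0$, and both obtain $\lambda_2$ by adding the gap comparison of Theorem~\ref{gap-comp} and the model gap bound \eqref{gap-est-model} to the $\lambda_1$ estimate. The only (harmless) difference is that you trace the pointwise Hessian bound \eqref{Hessin-phi1} and use $\Delta\log\phi_1=-\lambda_1-|\nabla\log\phi_1|^2$, whereas the paper integrates the two-point estimate \eqref{log-phi1} over small geodesic spheres centered at $x_0$ and applies the divergence theorem before letting the radius tend to zero, which is the averaged form of the same computation.
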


When $K=0$, this recovers Corollary 7.4 in \cite{ni} in the case $q(x) =0$. Our proof is also similar.
\begin{proof}
Let  $\phi_1$ be a positive eigenfunction associated to the eigenvalue $\lambda_1$. Then $\phi_1$ attains the maximum at a interior point $x_0\in\Omega$  and $\nabla \phi_1 (x_0) =0$.
For $r$ small, let  $B_{x_0}(r) \subset \Omega$ be a geodesic ball centered at $x_0$ with radius $r$ and $\gamma:[0, r]\rightarrow \Omega $ be a normalized geodesic from $x_0$ to $x \in \dd B_{x_0}(r)$. Integrating (\ref{log-phi1}) over $\dd B_{x_0}(r)$  and applying the divergence theorem, we have
	\begin{align*}
	2 \textrm{vol}(\dd B_{x_0}(r))\left(\log\bar{\phi}_1(\tfrac{r}{2})\right)'
	&\geq  \int_{\dd B_{x_0}(r)} \left[\langle\nabla \log \phi_1 (x), \gamma'(r)\rangle -  \langle\nabla \log \phi_1 (x_0), \gamma'(0)\rangle\right] dA(x)\\
	& =  \int_{\dd B_{x_0}(r)} \nabla \log \phi_1 (x)\cdot \nu_x \, dA(x)\\
	& =  \int_{ B_{x_0}(r)} \textrm{div}\left( \nabla \log \phi_1 (x)\right )\textrm{dvol}\\
	& =-\lambda_1 \vol(B_{x_0}(r)) - \int_{ B_{x_0}(r)} |\nabla\log \phi_1|^2 \textrm{dvol}.
	\end{align*}
	Then
	$$ \lambda_1\geq -2\frac{\vol(\dd B_{x_0}(r))}{ \textrm{vol}(B_{x_0}(r))}\left(\log\bar{\phi}_1(\tfrac{r}{2})\right)'-\frac{1}{\vol(B_{x_0}(r))}\int_{ B_{x_0}(r)} |\nabla\log \phi_1|^2 \textrm{dvol}.\\$$
	Let $r\to 0$ in the right hand side above, using $\nabla \phi_1(x_0)=0$, the second term in the right hand of above inequality is zero. Also
	\begin{align*}
	-2\lim\limits_{r\to 0}\frac{\vol(\dd B_{x_0}(r))}{ \vol(B_{x_0}(r))}\left(\log\bar{\phi}_1(\tfrac{r}{2})\right)'
	&=-2\lim\limits_{r\to 0}\frac{\omega_{n-1}r^{n-1}}{\int^r_0\omega_{n-1}s^{n-1}ds}\left(\log\bar{\phi}_1(\tfrac{r}{2})\right)' \\
	&=-n\lim\limits_{r\to 0} \frac{\left(\log\bar{\phi}_1(\tfrac{r}{2})\right)'}{\tfrac{r}{2}}\\
	&=n\bar{\lambda}_1,
	\end{align*}
	where we used (\ref{f'}) and $\bar{\phi}_1'(0)=0$. Combining this with (\ref{lambda_1-bar-lowerbound}) and Theorem \ref{gap-est} gives the result.
\end{proof}

\appendix

\section{Numerical computation of the gap for the sphere model}\label{numerics}

We define the normalized gap to be
\begin{equation*}
\frac{D^2}{\pi^2}\left(\bar\lambda_2 - \bar\lambda_1\right)
\end{equation*}
Below we give the numerically computed values for the normalized gap for different diameter $D$ and different dimension $n$.  We used the finite difference method on the operator
\begin{equation*}
\frac{d^2}{ds^2} - \frac{(n-1)}{4}\left(\frac{n-3}{\cos^2(s)} - (n-1)\right).
\end{equation*}
Namely, the operator in \eqref{schrodingernormal} for $K=1$.
\begin{table}[ht]
\caption{The normalize gap when $D$ increases}
\centering
\begin{tabular}{l c c}
\hline
$D$ & gap for $n=2$ & gap for $n=4$ \\
\hline
0.5  & 2.9999262845 & 3.0001717986 \\
1.5  & 2.9940610569 & 3.0177628990 \\
2.1  & 2.9713788083 & 3.0854596183 \\
3.1  & 2.5564359813 & 3.8997988823 \\
3.14 & 2.3138191920 & 3.9959197251 \\
3.141 & 2.2836242932 & 3.9984557650 \\
3.14159 & 2.2582889873 & 3.9999546561 \\
\hline
\end{tabular}
\end{table}

Here we can see that the normalized gap is decreasing for $n=2$ and increasing for $n =4$.

\begin{table}[ht]
\caption{The normalize gap for increasing $n$ and $D=1.57 < \frac{\pi}{2}$}
\centering
\begin{tabular}{c c }
\hline
$n$ & normalized gap \\
\hline
2 & 2.99272656 \\
3 & 2.99998766 \\
4 & 3.02176274 \\
5 & 3.05802504 \\
6 & 3.10872300 \\
7 & 3.17377060 \\
8 & 3.25303530 \\
9 & 3.34632483 \\
\hline
\end{tabular}
\end{table}
Here we see that the gap is increasing, however not linearly.  Note that when $n=3$, we can explicitly compute the normalized gap as 3, and the difference is due to the program's rounding error.

\section{Explicit Variation Formula}  \label{explicit-variation-sphere}
In this section we will use notions from Section 2. First we introduce the models we will use for $\mathbb M_K^n$,  the simply connected space with constant curvature $K$,  and review some basic facts about geodesics in these models.

For $K>0$, $\mathbb M_K^n \subset \mathbb R^{n+1}$ is the set given by the equation
\[
x_1^2 + x_2^2 + \cdots + x_{n+1}^2 = \tfrac{1}{K}. \]
For $K<0$, $\mathbb M_K^n \subset \mathbb R^{n+1}$ is the set given by the equation
\[
x_1^2 + x_2^2 + \cdots  + x_n^2- x_{n+1}^2 = \tfrac{1}{K} \] with $x_{n+1} >0$.

With these models we have the following representation for geodesics.
Namely for any $x \in \mathbb M_K^n$ and unit vector $ v \in T_x \mathbb M_K^n$, the geodesic start from $x$ in the direction $v$ is given by
\begin{equation}
\exp_x r v = \cs_K (r)\, x +\sn_K (r)\, v.  \label{exp}
\end{equation}
Given any two points $x,y \in \mathbb M_K^n$, the geodesic connecting $x,y$ is given by intersection $\mathbb M_K^n$ with the plane containing the origin and $x,y$, and its distance is given by the following
\begin{equation}
 \tfrac{1}{K} \cs_K (d (x,y)) = \langle x, y \rangle,
\end{equation}
where one uses the Lorentz metric for the inner product when $K<0$.

Now  we are ready to construct the variation explicitly. Let $\gamma(s)$ be a unit speed geodesic on  $\mathbb M_K^n$  such that $\gamma(-\tfrac{d_0}{2}) = x_0$ and $\gamma(\tfrac{d_0}{2}) = y_0$ ( with $d_0 < \tfrac{\pi}{\sqrt{K}}$ when $K>0$).  Let $e_n := \gamma'(-\tfrac{d_0}{2})$ and $e_i \in T_{x_0} \mathbb M_K^n$ a unit vector which is perpendicular to $e_n$.  Then parallel translate $e_i, e_n$ along $\gamma(s)$ so that $e_n(s) = \gamma'(s)$.  We now construct an explicit geodesic variation of $\gamma(s)$ in the $e_i$ direction. Then construct two geodesics $\sigma_{x_0}, \sigma_{y_0}$ at the endpoints of $\gamma(s)$ such that $\sigma_{x_0}(0) = x_0$ and $\sigma_{y_0}(0) = y_0$, with initial condition $\sigma_{x_0}'(0) = \sigma_{y_0}'(0) = e_i$.
By (\ref{exp}),
\begin{align*}
\sigma_{x_0}(r) &= \cs_K(r) \, x_0+\sn_K(r) \, e_i := \p(r)\\
\sigma_{y_0}(r) &= \cs_K(r) \, y_0 + \sn_K (r) \, e_i := \q(r).
\end{align*}
Let $d_r := d(\p(r),\q(r))$. Then
\begin{equation}\label{dotproduct}
\langle \p,\q\rangle = \tfrac{1}{K} \cs_K (d_r) = \tfrac{1}{K}\cs_K(d_0)\cs_K^2(r)+\sn_K^2(r).
\end{equation}
For each $r$, let $f_r(s)$ denote the geodesic such that $f_r(-\tfrac{d_r}{2}) = \p(r)$ and $f_r(\tfrac{d_r}{2}) = \q(r)$.
To obtain an explicit formula for $f_r(s)$, we use the fact that on $\mathbb M_K^n$, the geodesics are characterized by intersections with the plane containing the origin and the endpoints. Hence the direction of the geodesic connecting $\p$ and $\q$ is the orthogonal projection of the vector $\q-\p$ onto $\p$. Namely\[
\q-\p - (\q-\p) \cdot \p \frac{\p}{\|\p\|^2} =
 \q -K(\langle \p,\q\rangle )\,  \p .\]
 Denote its unit vector by
 \begin{equation*}
U(r) = \tfrac{1}{\sqrt{K^{-1}-K (\langle \p, \q \rangle)^2}} \left[\q  -K( \langle \p,\q\rangle )\,  \p  \, \right]. \end{equation*}
Then \[
f_r(s) = \exp_{\sigma_{x_0}(r)} ( s +\tfrac{d_r}{2})\, U(r) =\cs_K( s +\tfrac{d_r}{2}) \p +\sn_K( s +\tfrac{d_r}{2}) \,U(r) .
\]
 Reparametrize by letting $V(r) = \frac{d_r}{d_0}\, U(r).$ Then \begin{equation}
 \eta(r,s) = \exp_{\sigma_{x_0}(r)}( s  +\tfrac{d_0}{2})\, V(r)= \cs_K (\tfrac{d_r}{d_0} s +\tfrac{d_r}{2})\p +\sn_K (\tfrac{d_r}{d_0} s +\tfrac{d_r}{2}) \, U(r)  \label{explicit-eta}
  \end{equation}
  is the variation of geodesics. Namely for each fixed $r$, $\eta(r,s)$ is the geodesic such that $\eta (r, -\tfrac{d_0}{2}) = \sigma_{x_0}(r)$ and  $\eta (r, \tfrac{d_0}{2}) = \sigma_{y_0}(r)$.

  To compute the expansion in $r$ up to the second order term, we   compute out the expansion for the following terms:
\begin{align*}
d_r &= d_0 - \tn_K (\tfrac{d_0}{2})r^2+O(r^4) \\
\p(r) &= \left(1-\tfrac{Kr^2}{2}+O(r^4)\right)x_0 +\left( r+O(r^3)\right)e_i\\
\q(r) &= \left(1-\tfrac{Kr^2}{2}+O(r^4)\right)y_0 +\left( r+O(r^3)\right)e_i\\
\langle \p,\q\rangle &= K^{-1}\cs_K(d_0) + (1-\cs_K(d_0))r^2 + O(r^4) \\
\frac{1}{\sqrt{K^{-1}-K (\langle\p,\q\rangle)^2}} &=  \frac{1}{\sn_K(d_0)}+\frac{1}{\tn_K(d_0)(\cs_K(d_0)+1)}K^2r^2 +O(r^4)
\\
\end{align*}
Using this expansion, we have $V(0) = e_n,\ \nabla_r V (0) = \tn_K(\tfrac{d_0}{2}) e_i, \ \nabla_r \nabla_r V (0) =(-\frac{2}{d_0}\tn_K(\tfrac{d_0}{2})-\tn^2_K(\tfrac{d_0}{2}))e_n $.
Therefore
\begin{align}
\nabla_r\nabla_r\dd_s\eta(0,\pm\tfrac{d_0}{2}) = \left(-\tfrac{2}{d_0}\tn_K(\tfrac{d_0}{2}) -\tn^2_K(\tfrac{d_0}{2})\right)e_n.  \label{second-derivative-in-r}
\end{align}

\begin{bibdiv}
\begin{biblist}
\bib{Andrews-survey}{article}{
   author={Andrews, Ben},
   title={Moduli of continuity, isoperimetric profiles, and multi-point estimates in geometric heat equations},
   conference={
      title={Surveys in differential geometry 2014. Regularity and evolution
      of nonlinear equations},
   },
   book={
      series={Surv. Differ. Geom.},
      volume={19},
      publisher={Int. Press, Somerville, MA},
   },
   date={2015},
   pages={1--47},
   review={\MR{3381494}},
}

\bib{andrewsclutterbuck}{article}{
   author={Andrews, Ben},
   author={Clutterbuck, Julie},
   title={Sharp modulus of continuity for parabolic equations on manifolds
   and lower bounds for the first eigenvalue},
   journal={Anal. PDE},
   volume={6},
   date={2013},
   number={5},
   pages={1013--1024},
   issn={2157-5045},
   review={\MR{3125548}},
}

\bib{andrewsclutterbuckgap}{article}{
   author={Andrews, Ben},
   author={Clutterbuck, Julie},
   title={Proof of the fundamental gap conjecture},
   journal={J. Amer. Math. Soc.},
   volume={24},
   date={2011},
   number={3},
   pages={899--916},
   issn={0894-0347},
   review={\MR{2784332}},
}

\bib{Andrews-Ni}{article}{
   author={Andrews, Ben},
   author={Ni, Lei},
   title={Eigenvalue comparison on Bakry-Emery manifolds},
   journal={Comm. Partial Differential Equations},
   volume={37},
   date={2012},
   number={11},
   pages={2081--2092},
   issn={0360-5302},
   review={\MR{3005536}},
}

\bib{Ashbaugh06}{article}{
   author={Ashbaugh, Mark S.},
   title={The Fundamental Gap},
   volume={ },
   date={2006},
   number={ },
   pages={ },
   issn={ },
   review={ },
   doi={ },
   journal={http://www.aimath.org/WWN/loweigenvalues/ },
}

\bib{ashbaughbenguria}{article}{
   author={Ashbaugh, Mark S.},
   author={Benguria, Rafael D.},
   title={Optimal lower bound for the gap between the first two eigenvalues
   of one-dimensional Schr\"{o}dinger operators with symmetric single-well
   potentials},
   journal={Proc. Amer. Math. Soc.},
   volume={105},
   date={1989},
   number={2},
   pages={419--424},
   issn={0002-9939},
   review={\MR{942630}},
}

\bib{AshbaughBenguria92}{article}{
    author={Ashbaugh, Mark S.},
   author={Benguria, Rafael D.},
   title={A sharp bound for the ratio of the first two eigenvalues of Dirichlet Laplacians and extensions},
   journal={Ann. of Math. (2)},
   volume={135},
   date={1992},
   number={3},
   pages={601--628},
   issn={0003-486X},
   review={\MR{1166646}},
}

\bib{ashbaughbenguria2}{article}{
   author={Ashbaugh, Mark S.},
   author={Benguria, Rafael D.},
   title={A sharp bound for the ratio of the first two Dirichlet eigenvalues of a domain in a hemisphere of $S^n$},
   journal={Trans. Amer. Math. Soc.},
   volume={353},
   date={2001},
   number={3},
   pages={1055--1087},
   issn={0002-9947},
   review={\MR{1707696}},
}

\bib{AshbaughLevine}{article}{
   author={Ashbaugh, Mark S.},
   author={Levine, Howard A.},
   title={Inequalities for the Dirichlet and Neumann eigenvalues of the
   Laplacian for domains on spheres},
   conference={
      title={Journ\'{e}es  ``\'{E}quations aux D\'{e}riv\'{e}es Partielles'' },
      address={Saint-Jean-de-Monts},
      date={1997},
   },
   book={
      publisher={\'Ecole Polytech., Palaiseau},},
   date={1997},
   pages={Exp.\ No.\ I, 15},
   review={\MR{1482268}},
}

\bib{Bakry-Qian2000}{article}{
	author={Bakry, Dominique},
	author={Qian, Zhongmin},
	title={Some new results on eigenvectors via dimension, diameter, and
		Ricci curvature},
	journal={Adv. Math.},
	volume={155},
	date={2000},
	number={1},
	pages={98--153},
	issn={0001-8708},
	review={\MR{1789850}},
}

\bib{Benguria-Linde2007}{article}{
	author={Benguria, Rafael D.},
	author={Linde, Helmut},
	title={A second eigenvalue bound for the Dirichlet Laplacian in
		hyperbolic space},
	journal={Duke Math. J.},
	volume={140},
	date={2007},
	number={2},
	pages={245--279},
	issn={0012-7094},
	review={\MR{2359820}},
}

\bib{BrascampLieb}{article}{
   author={Brascamp, Herm Jan},
   author={Lieb, Elliott H.},
   title={On extensions of the Brunn-Minkowski and Pr\'ekopa-Leindler theorems, including inequalities for log concave functions, and with an application to the diffusion equation},
   journal={J. Functional Analysis},
   volume={22},
   date={1976},
   number={4},
   pages={366--389},
   review={\MR{0450480}},
}

\bib{Chavel}{book}{
   author={Chavel, Isaac},
   title={Eigenvalues in Riemannian geometry},
   series={Pure and Applied Mathematics},
   volume={115},
   note={Including a chapter by Burton Randol;
   With an appendix by Jozef Dodziuk},
   publisher={Academic Press, Inc., Orlando, FL},
   date={1984},
   pages={xiv+362},
   isbn={0-12-170640-0},
   review={\MR{768584}},
}
	
\bib{Cheeger}{book}{
AUTHOR = {Cheeger, Jeff},
TITLE = {Degeneration of {R}iemannian metrics under {R}icci curvature
	bounds},
SERIES = {Lezioni Fermiane. [Fermi Lectures]},
PUBLISHER = {Scuola Normale Superiore, Pisa},
YEAR = {2001},
PAGES = {ii+77}
}

\bib{Cheeger-Ebin}{book}{
   author={Cheeger, Jeff},
   author={Ebin, David G.},
   title={Comparison theorems in Riemannian geometry},
   note={North-Holland Mathematical Library, Vol. 9},
   publisher={North-Holland Publishing Co., Amsterdam-Oxford; American
   Elsevier Publishing Co., Inc., New York},
   date={1975},
   pages={viii+174},
   review={\MR{0458335}},
}

\bib{Chen-Wang97}{article}{
	author={Chen, Mufa},
	author={Wang, Fengyu},
	title={General formula for lower bound of the first eigenvalue on
		Riemannian manifolds},
	journal={Sci. China Ser. A},
	volume={40},
	date={1997},
	number={4},
	pages={384--394},
	issn={1006-9283},
	review={\MR{1450586}},
}

\bib{HsuWang}{article}{
   author={Hsu, Yi-Jung},
   author={Wang, Tai-Ho},
   title={Inequalities between Dirichlet and Neumann eigenvalues for domains
   in spheres},
   journal={Taiwanese J. Math.},
   volume={5},
   date={2001},
   number={4},
   pages={755--766},
   issn={1027-5487},
   review={\MR{1870045}},
}

\bib{Kroger92}{article}{
	author={Kr{\"o}ger, Pawel},
	title={On the spectral gap for compact manifolds},
	journal={J. Differential Geom.},
	volume={36},
	date={1992},
	number={2},
	pages={315--330},
	issn={0022-040X},
	review={\MR{1180385}},
}
\bibitem{Lavine} R. Lavine, {\em The eigenvalue gap for one-dimensional convex potentials}, Proc.
Amer. Math. Soc. 121 (1994), no. 3, 815-821.

\bib{Lee-Wang}{article}{
	author={Lee, Yng Ing},
	author={Wang, Ai Nung},
	title={Estimate of $\lambda_2-\lambda_1$ on spheres},
	journal={Chinese J. Math.},
	volume={15},
	date={1987},
	number={2},
	pages={95--97},
	issn={0379-7570},
	review={\MR{909159}},
}

\bib{Ling1993}{article}{
   author={Ling, Jun},
   title={A lower bound for the gap between the first two eigenvalues of Schr\"{o}dinger operators on convex domains in $S^n$ or ${\bf R}^n$},
   journal={Michigan Math. J.},
   volume={40},
   date={1993},
   number={2},
   pages={259--270},
   issn={0026-2285},
   review={\MR{1226831}},
}

\bib{LuRowlett}{article}{
   author={Lu, Zhiqin},
   author={Rowlett, Julie},
   title={The fundamental gap and one-dimensional collapse},
   conference={
      title={Geometric and spectral analysis},
   },
   book={
      series={Contemp. Math.},
      volume={630},
      publisher={Amer. Math. Soc., Providence, RI},
   },
   date={2014},
   pages={223--246},
   review={\MR{3328544}},
   doi={10.1090/conm/630/12668},
}

\bib{ni}{article}{
   author={Ni, Lei},
   title={Estimates on the modulus of expansion for vector fields solving
   nonlinear equations},
   language={English, with English and French summaries},
   journal={J. Math. Pures Appl. (9)},
   volume={99},
   date={2013},
   number={1},
   pages={1--16},
   issn={0021-7824},
   review={\MR{3003280}},
}

\bib{Oden-Sung-Wang}{article}{
   author={Oden, Kevin},
   author={Sung, Chiung-Jue},
   author={Wang, Jiaping},
   title={Spectral gap estimates on compact manifolds},
   journal={Trans. Amer. Math. Soc.},
   volume={351},
   date={1999},
   number={9},
   pages={3533--3548},
   issn={0002-9947},
   review={\MR{1443886}},
}

\bib{Schoen-Yau}{book}{
   author={Schoen, Richard},
   author={Yau, Shing Tung},
   title={Lectures on differential geometry},
   series={Conference Proceedings and Lecture Notes in Geometry and Topology, I},
   note={Lecture notes prepared by Wei Yue Ding, Kung Ching Chang [Gong Qing
   Zhang], Jia Qing Zhong and Yi Chao Xu;
   Translated from the Chinese by Ding and S. Y. Cheng;
   Preface translated from the Chinese by Kaising Tso},
   publisher={International Press, Cambridge, MA},
   date={1994},
   pages={v+235},
   isbn={1-57146-012-8},
   review={\MR{1333601}},
}


\bib{singerwongyauyau}{article}{
   author={Singer, I. M.},
   author={Wong, Bun},
   author={Yau, Shing-Tung},
   author={Yau, Stephen S.-T.},
   title={An estimate of the gap of the first two eigenvalues in the
   Schr\"odinger operator},
   journal={Ann. Scuola Norm. Sup. Pisa Cl. Sci. (4)},
   volume={12},
   date={1985},
   number={2},
   pages={319--333},
   issn={0391-173X},
   review={\MR{829055}},
}

\bib{vandenberg}{article}{
   author={Van den Berg, M.},
   title={On condensation in the free-boson gas and the spectrum of the
   Laplacian},
   journal={J. Statist. Phys.},
   volume={31},
   date={1983},
   number={3},
   pages={623--637},
   issn={0022-4715},
   review={\MR{711491}},
}
\bib{wei2007}{article}{
	author={Wei, Guofang},
	title={Manifolds with a lower Ricci curvature bound},
	conference={
		title={Surveys in differential geometry. Vol. XI},
	},
	book={
		series={Surv. Differ. Geom.},
		volume={11},
		publisher={Int. Press, Somerville, MA},
	},
	date={2007},
	pages={203--227},
	review={\MR{2408267}},
}

\bib{yau}{book}{
   author={Yau, Shing Tung},
   title={Nonlinear analysis in geometry},
   series={Monographies de L'Enseignement Math\'ematique [Monographs of
   L'Enseignement Math\'ematique]},
   volume={33},
   note={S\'erie des Conf\'erences de l'Union Math\'ematique Internationale
   [Lecture Series of the International Mathematics Union], 8},
   publisher={L'Enseignement Math\'ematique, Geneva},
   date={1986},
   pages={54},
   review={\MR{865650}},
}

\bib{YuZhong}{article}{
   author={Yu, Qi Huang},
   author={Zhong, Jia Qing},
   title={Lower bounds of the gap between the first and second eigenvalues
   of the Schr\"odinger operator},
   journal={Trans. Amer. Math. Soc.},
   volume={294},
   date={1986},
   number={1},
   pages={341--349},
   issn={0002-9947},
   review={\MR{819952}},
   doi={10.2307/2000135},
}

\bibitem{Zhang-wang}
Y.  Zhang and K. Wang,
{\it An alternative proof of lower bounds for the first eigenvalue of manifolds}, 	arXiv:1605.07255.

\bib{Zhong-Yang84}{article}{
   author={Zhong, Jia Qing},
   author={Yang, Hong Cang},
   title={On the estimate of the first eigenvalue of a compact Riemannian
   manifold},
   journal={Sci. Sinica Ser. A},
   volume={27},
   date={1984},
   number={12},
   pages={1265--1273},
   issn={0253-5831},
   review={\MR{794292}},
}

\end{biblist}
\end{bibdiv}

\end{document}